\newtheorem{theorem}{Theorem}
\newtheorem{lemma}{Lemma}
\newtheorem{proposition}{Proposition}
\newtheorem{corollary}{Corollary}
\newtheorem{definition}{Definition}
\newtheorem{remark}{Remark}
\newtheorem{assumption}{Assumption}
\newenvironment{proof}[1][Proof]{\begin{trivlist}
\item[\hskip \labelsep {\bfseries #1}]}{\end{trivlist}}
\newcommand{\supscr}[2]{#1^{\textup{#2}}}
\newcommand{\until}[1]{\{1,\dots,#1\}}
\newcommand{\real}{{\mathbb{R}}}
\newcommand{\realnonnegative}{{\mathbb{R}}_{\ge 0}}
\newcommand{\dvol}{\operatorname{dvol}}
\newcommand{\vol}{\operatorname{vol}}
\newcommand{\diff}{\operatorname{d}}
\newcommand{\supp}{\operatorname{supp}}
\newcommand{\id}{\operatorname{id}}
\newcommand{\Lip}{\operatorname{Lip}}
\newcommand{\lip}{\operatorname{lip}}
\newcommand\oprocendsymbol{\hbox{$\square$}}
\newcommand\oprocend{\relax\ifmmode\else\unskip\hfill\fi\oprocendsymbol} 
\renewcommand*{\@opargbegintheorem}[3]{\trivlist
      \item[\hskip \labelsep{\bfseries #1\ #2}] \textbf{(#3)}\ \itshape}
\begin{document}

\begin{frontmatter}

\title{A Multiscale Analysis of Multi-Agent \\ Coverage Control Algorithms}

\thanks[footnoteinfo]{This material is based upon work supported  by grant AFOSR FA-9550-18-1-0158.}

\author[krishnan1,krishnan2]{Vishaal Krishnan}\ead{vkrishnan@seas.harvard.edu},     
\author[martinez]{Sonia Mart{\'i}nez}\ead{soniamd@ucsd.edu},               

\address[krishnan1]{School of Engineering and Applied Sciences, Harvard University, Cambridge MA 02138 USA.}                               
\address[krishnan2]{Lund Vision Group, Department of Biology, Lund University, 223 62 Lund, Sweden.}                                            
\address[martinez]{Department of Mechanical and Aerospace Engineering, University of California, San Diego, La Jolla CA 92093 USA.}

\begin{keyword}   
Multi-agent systems, coverage control, multiscale analysis,
proximal descent, Lloyd's algorithm.                                     
\end{keyword}

\begin{abstract}    
This paper presents a theoretical framework for the design and 
analysis of gradient descent-based algorithms for coverage control 
tasks involving robot swarms. We adopt a multiscale 
approach to analysis and design to ensure consistency of
the algorithms in the large-scale limit.
First, we represent the macroscopic configuration of the swarm 
as a probability measure and formulate the macroscopic coverage 
task as the minimization of a convex objective function over 
probability measures. We then construct a macroscopic 
dynamics for swarm coverage, which takes the form of a 
proximal descent scheme in the $L^2$-Wasserstein space.
Our analysis exploits the generalized geodesic convexity of 
the coverage objective function, proving convergence in the 
$L^2$-Wasserstein sense to the target probability measure.
We then obtain a consistent gradient descent algorithm in the 
Euclidean space that is implementable by a finite collection of 
agents, via a ``variational'' discretization of
the macroscopic coverage objective function.
We establish the convergence properties of the gradient descent 
and its behavior in the continuous-time and large-scale limits.
Furthermore, we establish a connection with well-known 
Lloyd-based algorithms, seen as a particular class of algorithms   
within our framework, and demonstrate our results
via numerical experiments.                       
\end{abstract}

\end{frontmatter}

\section{Introduction}
Multi-agent systems are groups of autonomous agents with sensing,
communication, and computational capabilities.  It is often necessary
to achieve a desired coverage of a spatial region before these systems
can be deployed for specific purposes. This has spurred intense
research activity on the design of multi-agent coverage control
algorithms~\cite{JC-SM-TK-FB:02-tra, JC:08-tac}.   
In spatial coverage control problems involving
large-scale multi-agent systems, it is often more
appropriate and convenient to specify the task objective at the
macroscopic scale for the distribution of agents over the spatial region.
However, actuation still rests at the microscopic scale at the level
of the individual agents, and faces a multitude of constraints imposed
by the multi-agent setting.  These include information constraints
from limitations on sensing, communication and localization, and
physical constraints such as collision and obstacle avoidance.  This
separation of scales poses a problem for the analysis and design of algorithms with
performance guarantees. While \textit{mechanistic} models relying on
theoretical tools from infinite-dimensional analysis are often more
appropriate for macro scales, an \textit{algorithmic} approach that
relies on tools from finite dimensional analysis is more effective in
addressing the above microscopic constraints.  This underscores the
need for a formal theory bridging the two scales. Such a bridge theory
is crucial for integrating the mechanistic and algorithmic paradigms
and in understanding how macroscopic coverage objectives translate to
the microscopic level of individual agents and conversely, how the microscopic
algorithms shape macroscopic behavior. 

\textbf{Related work.} Multi-agent coverage control algorithms have
been widely studied over the past two decades and have a rich
literature. For an (inexhaustive) overview of the literature, we adopt the
classification into \textit{mechanistic} vs \textit{algorithmic}
models, as introduced earlier.
The algorithmic perspective is predominantly based on tools from
distributed optimization.  Initial works combined distributed
optimization with ideas from computational geometry and dynamic
systems, applying the well-known Lloyd algorithm~\cite{SL:82} 
for quantization of signals to the multi-agent setting
\cite{JC-SM-TK-FB:02-tra, MZ-CGC:11, AB-MS-JCM-RS-DR:10}.
In this sense, coverage algorithms can be understood as obtaining 
a quantization of the underlying spatial domain.
Furthermore, the problem setting in coverage control has been 
extended to include sensing, energy, obstacle
and collision avoidance constraints encountered in
the multi-agent scenarios~\cite{JC-SM-FB:05-esaim, YR-SM:13, SB-NM-VK:13}.
Interest in the mechanistic perspective, fueled by efforts to scale
up the size of these systems, emphasized the need for tools of
macroscopic analysis. Specifying the configuration of the multi-agent system 
as a probability measure or probability density naturally led to the application 
of mathematical tools from probability, stochastic processes and partial differential
equations (PDE). One approach involves the use of PDE-based models,
applying ideas of diffusion/heat flow to coverage control~\cite{VK-SM:18-sicon, TZ-QH-HL:20}.
Tools from parameter tuning and boundary control of PDEs~\cite{PF-MK:11,FZ-AB-KE-SB:18} 
have been used in this context. Statistical physics-based approaches, including the application of
mean-field theory, have also been recently explored~\cite{KE-SB:19, KE-ZK-AS-SB:20}.
Another approach involves the design of coverage by synthesis of Markov transition
matrices~\cite{SB-SJC-FYH:13, ND-UE-BA:15, SB-SJC-FH:17, MEC-YY-BA-MO:18}.
Some works at the intersection of the microscopic and macroscopic
perspectives include~\cite{FZ-AB-KE-SB:18}, where the authors obtain
performance bounds for spatial coverage by multi-agent
swarms, characterizing coverage performance as a function of
the number of robots and robot sensing radius.

Energy considerations in multi-agent transport have more recently resulted in the adoption of
tools from optimal transport theory~\cite{JDB-YB:00,NP-GP-EO:14,YC-TTG-MP:21,YC-TTG-MP:15-i,YC-TTG-MP:15-ii,YC-TTG-MP:18}.  
 More fundamentally, the need for ideas from optimal transport theory~\cite{CV:08} arises 
 from the utility of optimal transport metrics for
 analysis in the space of probability measures, mostly importantly the
 Wasserstein distance.  This allows for establishing a connection
 between the gradient descent-based algorithmic approaches to
 multi-agent coverage and gradient flows in the space of probability
 measures. For a detailed treatment of the theory of gradient flows
 in the space of probability measures, we refer the reader to
 \cite{LA-NG-GS:08}.
 Some well-known transport PDEs can
be formulated as gradient flows on functionals in the space of
probability measures~\cite{LA-NG-GS:08}.  Furthermore, from a
computational perspective, gradient flows in the space of probability
measures are often discretized into particle gradient flows.  The
gradient flow structure underlying these PDEs allows for their
discretization by formulating proximal gradient descent schemes in the
space of probability measures.  For instance, in~\cite{RJ-DK-FO:98}
the authors discretize the well-known Fokker-Planck equation by a
proximal recursion. In~\cite{AS-AK-GL:20} the authors present a non-asymptotic 
analysis of proximal recursions in the $L^2$-Wasserstein space.
 In \cite{LC-FB:18}, the authors investigate the
convergence of such particle gradient flows to global minima in the
limit $N \rightarrow \infty$.
In \cite{KC-AH:19-acc}, the authors apply
proximal descent schemes to study uncertainty propagation in
stochastic systems.
 
Optimal transport theory also has underlying connections to 
the problem of quantization~\cite{QD-VF-MG:99, DB-SR:15}, 
which as described earlier has well-known connections to the
coverage control problem.
This application of ideas from optimal transport to multi-agent 
coverage control remains an active area of 
research~\cite{VK-SM:18-cdc, GF-SF-TAW:14,
SF-GF-PZ-TAW:16, SB-SJC-FYH:14, MHB-UE-BA-MM:18}.
The various applications of optimal transport 
have motivated a search for efficient
computational methods for the optimal transport 
problem, and we refer the reader to~\cite{GP-MC:17} for a
comprehensive account.
Entropic regularization of the Kantorovich formulation
has been an efficient tool for approximate 
computation of the optimal transport cost 
using the Sinkhorn algorithms~\cite{MC:13}, \cite{MC-GP:18}.
Data-driven approaches to the computation of the optimal transport
cost between two distributions from their samples have been 
investigated in \cite{ET-GT:14, MK-ET:17}, and
with an eye towards large-scale problems in \cite{AG-MC-GP-FB:16},
\cite{VS-BD-RF-NC-AR-MB:17}, \cite{QM:11}. 
A related problem of computation of Wasserstein barycenters was 
addressed in~\cite{MC-AD:14}.
While computational approaches to optimal transport
often work with the static, Monge or Kantorovich formulations
of the problem, investigations involving dynamical formulations was initiated 
by~\cite{JDB-YB:00}, where the authors recast the~$L^2$ 
Monge-Kantorovich mass transfer problem in a fluid mechanics 
framework. The problem of optimal transport was also 
explored from a stochastic control perspective in~\cite{TM-MT:08} 
and~\cite{YC-TG-MP:16}, where the latter further explored connections to Schrodinger bridges.

\textbf{Contributions.}
%
%
  This paper contributes a multi-scale analysis of gradient 
  descent-based coverage algorithms for multi-agent systems, 
  with three main goals in mind: 
  (i) the formalization of coverage objectives for large-scale 
  multi-agent systems via meaningful macroscopic metrics, 
  (ii) the systematic design of provable correct algorithms that are 
  consistent across the macroscopic and microscopic scales, and 
  (iii) to gain a fundamental understanding of widely studied coverage 
  algorithms for large-scale multi-agent systems
  and shed new light on their behavior as the number of agents 
  $N \rightarrow \infty$. 
  A suitable theoretical framework for the above is largely missing 
  in the literature and this work addresses the gap.

  We formulate the coverage task as a minimization in the space of
  probability measures and define a proximal gradient descent on the
  aggregate objective function.  The multi-agent configuration is
  specified by discretizing the underlying probability measure and we
  obtain implementable coverage algorithms as a proximal gradient
  descent on the discretized aggregate objective function w.r.t. agent
  positions. This leads to a new class of ``variational'' gradient
  algorithms, and we show that this class of algorithms subsumes
  previously defined coverage algorithms based on distortion
  metrics. This allows us to establish a connection between the
  macroscopic and microscopic perspectives and present a unified
  theory of multi-agent coverage algorithms.
  
  \textbf{Paper outline.} The rest of the paper is organized as
  follows. Section~\ref{sec:coverage_optimization} contains a
  description of the coverage optimization problem setting.
  In Section~\ref{sec:preliminaries}, we present the mathematical
  preliminaries that underlie the main results in the paper.
   In Section~\ref{sec:transport_models}, we present an iterative descent
  scheme in the space of probability measures and establish
  convergence results for such a scheme. Building on these results, we
  propose multi-agent coverage algorithms in
  Section~\ref{sec:multi_agent_coverage_control} as the discretization
  of the iterative descent scheme from
  Section~\ref{sec:transport_models}, establish convergence results
  and study their behavior in the continuous-time and $N \rightarrow
  \infty$ limits. Section~\ref{sec:case_study_coverage_control}
  contains a case study of the well-known Lloyd's algorithm within the
  theoretical framework developed in the prior sections and results
  from numerical experiments. 

  \textbf{Notation.}  We let~$\|\cdot \| : \real^d \rightarrow
  \realnonnegative$ denote the Euclidean norm on~$\real^d$ and~$|
  \cdot | : \real \rightarrow \realnonnegative$ the absolute value
  function.  The gradient operator in $\real^d$ is represented as
  $\nabla = \left(
    \partial/\partial x_1, \ldots \partial/\partial x_n \right)$,
  where, as a shorthand, we let $\partial/\partial z \equiv \partial_z
  $ be the partial derivative w.r.t.~a variable~$z$ and
  $\frac{\partial}{\partial x_i} \equiv \partial_i$.
  Consider a set $\Omega \subseteq \real^d$. In what
  follows,~$\partial \Omega \subseteq \real^d$ denotes its boundary,
  $\bar{\Omega} = \Omega \cup \partial \Omega$ its closure, and
  $\mathring{\Omega} = \Omega \setminus \partial \Omega$ its interior
  with respect to the standard Euclidean topology.  For~$M \subseteq
  \Omega$, we define the distance~$d(x,M)$ of a point~$x \in \Omega$
  to~$M$ as~$d(x,M) = \inf_{y \in M} \| x - y \|$.  Given any~$x \in
  \Omega \subset \real^d$, the set~$B_r(x)$ is the closed $d$-ball of
  radius~$r>0$, centered at~$x$.
  The  identity map is denoted by~$\id$ and the identity matrix of
  dimension~$d$ by~$I_d$.
  The indicator function on~$\Omega$ for the subset~$M$ is denoted
  as~$\mathsf{1}_{M}:\Omega \rightarrow \{0,1\}$.
  We use~$\left \langle f, g \right \rangle$ to represent the inner
  product of functions~$f,g : \Omega \rightarrow \real$ w.r.t.~the
  Lebesgue measure~$\vol$, given by~$\left \langle f,g \right \rangle =
  \int_{\Omega} fg \dvol$. The set $\Lip(\Omega)$ is the space of
  Lipschitz continuous functions on $\Omega$.
  A function $p: \Omega \rightarrow \real$ is called $l$-smooth
(or Lipschitz differentiable) if for any~$x, y \in \Omega$, we
have~$\| \nabla p(y) - \nabla p(x) \| \leq l \| y-x \|$.
  It can be shown that for an $l$-smooth function 
  $p: \Omega \rightarrow \real$ and
  any~$x,y \in \Omega$, we have $| p(y) - p(x) - \langle \nabla
  p(x), y-x \rangle | \leq \frac{l}{2} \| y-x \|^2$.
  We denote by $\mathcal{P}(\Omega)$ the space of probability measures
  over $\Omega$ and by $\mathcal{P}^{\rm a}(\Omega)$ the space of 
  atomless probability measures over $\Omega$. For a measurable mapping~$\mathcal{T} : \Omega
  \rightarrow \Theta$, where $\Omega$ and $\Theta$ are measurable, we
  denote by $\mathcal{T}_{\#} \mu \in \mathcal{P}(\Theta)$ the
  pushforward measure of~$\mu \in \mathcal{P}(\Omega)$ and we have
  $\mathcal{T}_{\#} \mu (B) = \mu (\mathcal{T}^{-1}(B))$,  for all
  measurable $B \subseteq \Theta$.

\section{Coverage optimization problem} \label{sec:coverage_optimization} 
In this section, we
formulate the multi-agent coverage problem as an optimization of a
macroscopic coverage objective, which forms the focus of our analysis
and algorithm design in the subsequent sections.  We begin by
specifying the problem setting.
Let $\Omega \subset \real^d$ be compact and convex,  
and $\mathbf{x} = \left(x_1, \ldots, x_N \right)$ (with $x_i \in
\Omega$ for $i \in \mathcal{I} = \lbrace 1, \ldots, N \rbrace$ being
the agent positions) denote the microscopic state of the multi-agent
system.  In specifying the macroscopic configuration, we look for a
representation that satisfies two key properties, (i)
\textit{Permutation-invariance:} Assuming that the agents are
identical, i.e., the multi-agent system is homogeneous, 
we note that every microscopic configuration $\mathbf{x}
\in \Omega^N$ is equivalent to $\left( P \otimes I_{d} \right)
\mathbf{x}$ for any permutation $P \in \real^{N\times N}$.  The
representation must be invariant under such permutations, and (ii)
\textit{Consistency in the $N \rightarrow \infty$ limit}: The space of
representations must contain the ``representation limit" as $N
\rightarrow \infty$, to enable the study of large-scale properties of
coverage algorithms.  This leads us to specifying the macroscopic
configuration of the multi-agent system by probability measures over
the underlying space~$\Omega$.  For the microscopic configuration
$\mathbf{x} = \left(x_1, \ldots, x_N \right)$, we specify the
corresponding macroscopic configuration by the probability measure
$\widehat{\mu}^N_{\mathbf{x}} = \frac{1}{N} \sum_{i=1}^N
\delta_{x_i}$.  We note that $\widehat{\mu}^N_{\mathbf{x}}$ is
invariant under permutations of agent positions.  Furthermore, if the
positions~$x_i$ are independently and identically distributed
according to an (absolutely continuous) probability measure $\mu \in
\mathcal{P}(\Omega)$, it follows from the Glivenko-Cantelli
theorem~\cite{PB:13} that as $N \rightarrow \infty$, the discrete
probability measure $\widehat{\mu}^N_{\mathbf{x}}$ converges
uniformly, and almost surely, to~$\mu$.  In this way, probability
measures over~$\Omega$ are a suitable space of macroscopic
representations that combine the desired properties of
permutation-invariance and consistency in the $N \rightarrow \infty$
limit.

With the microscopic and macroscopic representations of the
multi-agent system in place, as illustrated in
Figure~\ref{fig:multiscale}, we now move to the specification of the
coverage task as the minimization of a macroscopic coverage objective
function $F:\mathcal{P}(\Omega) \rightarrow \real$. We let~$F$ be
$l$-smooth and strictly geodesically convex (the notion is introduced in
Section~\ref{sec:preliminaries}), 
  with a unique minimizer $\mu^* \in
\mathcal{P}(\Omega)$.  The coverage problem can then be described as
follows: Given an initial macroscopic configuration $\mu_0 \in
\mathcal{P}(\Omega)$ of the multi-agent system (with $\mu_0$ being an
absolutely continuous probability measure), specify a descent scheme
in $\mathcal{P}(\Omega)$ that minimizes the coverage objective
function~$F$, generating a sequence $\{\mu_k\}_{k \in \mathbb{N}}$
that converges weakly to $\mu^*$ as $k \rightarrow \infty$.  In
Section~\ref{sec:transport_models}, we propose a proximal descent
scheme that exploits the (generalized) convexity of $F$ to solve the
coverage task.  Furthermore, in
Section~\ref{sec:multi_agent_coverage_control} we obtain an
implementable multi-agent coverage algorithm that updates agent
positions in $\Omega$ and performs consistently (in the $N \rightarrow
\infty$ limit) with the macroscopic descent scheme.  That is, we
design a provably-correct, discrete-time, agent-based algorithm that
generates microscopic sequences $\{\mathbf{x}_k\}_{k \in \mathbb{N}}
\subseteq \Omega^N$ such that $\lim_{k,N \rightarrow \infty}
\widehat{\mu}_{\mathbf{x}_k}^N = \mu^\star$.  We address this question
in Section~\ref{sec:multi_agent_coverage_control} by tying the
macroscopic descent scheme with the microscopic coverage algorithm by
means of a variational approach.
 
\textit{Example coverage objective functions.}  We introduce a class of
coverage objective functions, whose convexity properties will be
analyzed in Section~\ref{sec:case_study_coverage_control}. 
Furthermore, in Section~\ref{sec:case_study_coverage_control} we also
establish a relationship between the macroscopic descent scheme
corresponding to these objective functions and the well-known Lloyd's
algorithm~\cite{JC-SM-TK-FB:02-tra}.
Let $f : \real \rightarrow \real$ be a strictly convex,
non-decreasing and $l$-smooth function with $f(0) = 0$, and let:
\begin{align}
  C_f(\mu, \nu) = \inf_{\substack{T: \Omega \rightarrow \Omega \\
      T_{\#}\mu = \nu }} \int_{\Omega} f(|x - T(x)|)~d\mu(x),
	\label{eq:OT_cost_f}
\end{align}
be defined for two probability measures~$\mu$ and~$\nu$.  In the
quadratic case $f(x) = x^2$, we get $C_f \equiv W_2^2$, the so-called
$L^2$-Wasserstein distance, which is a metric over
$\mathcal{P}(\Omega)$.  Conversely, this suggests the design of a
coverage objective function given a target macroscopic configuration
$\mu^\star$, as $F(\mu) = W_2^2(\mu,\mu^\star)$, which quantifies how
far $\mu$ is from the target~$\mu^\star$.

\begin{figure}[!h]
        \includegraphics[width=0.48\textwidth]{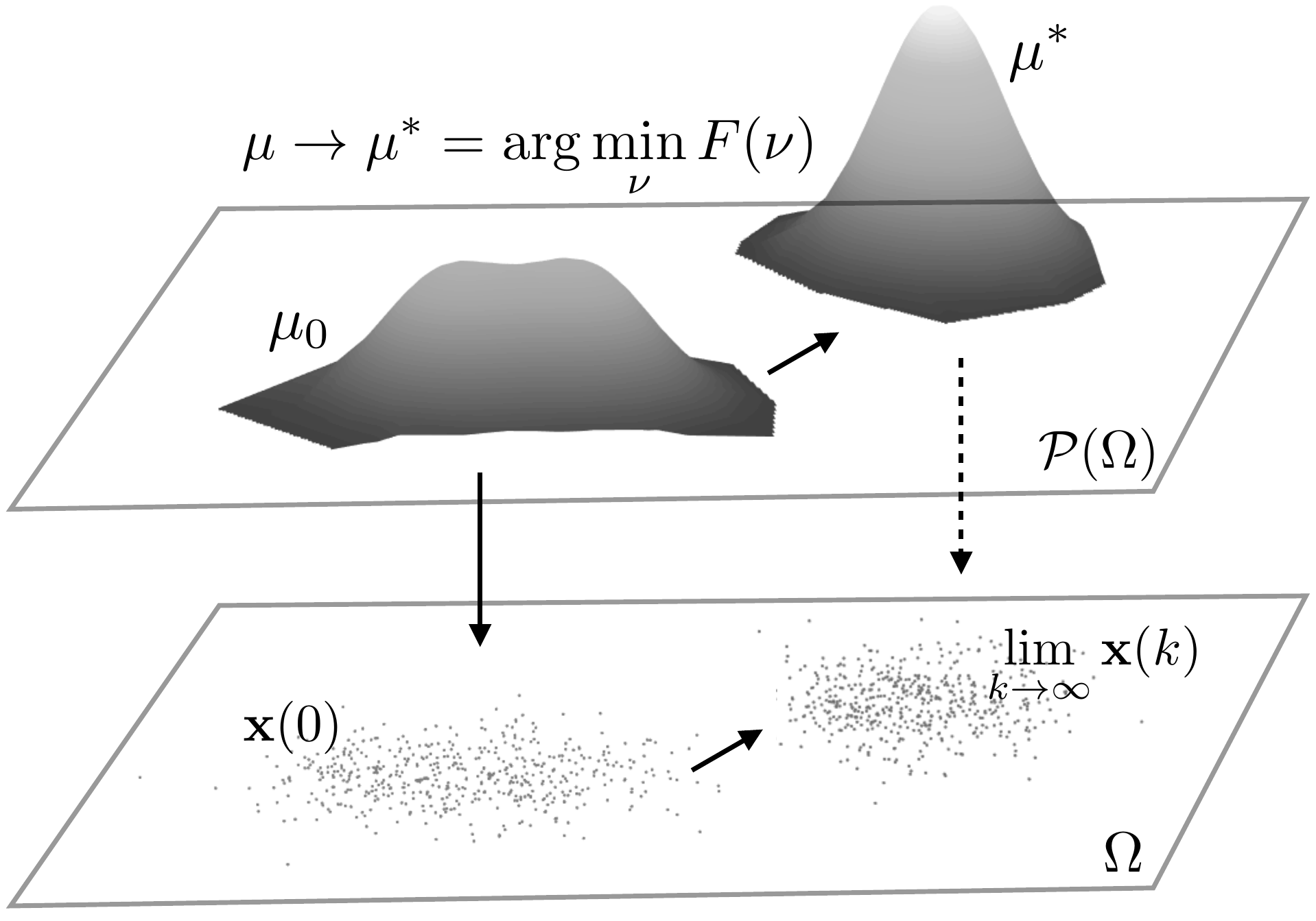}
        \caption{The figure illustrates scale separation in coverage
        control, with the agent positions in the domain~$\Omega$
        and the corresponding probability distributions over $\Omega$ 
        constituting the microscopic and macroscopic
        representations, respectively.}
		\label{fig:multiscale}
\end{figure}

\section{Mathematical preliminaries}
\label{sec:preliminaries}
Here, we summarize mathematical notions that are needed as
a background to the main results of the paper. For more
details, we refer the reader to~\cite{PB:13,FS:15} for more
information as well as to the Appendix.

\subsection{The Wasserstein space of probability measures}

Let $\Omega$ be a compact subset of $\real^d$, let
$\mathcal{P}(\Omega)$ denote the space of probability measures over
$\Omega$ and $\supscr{\mathcal{P}}{a}(\Omega)$ the set of
atomless\footnote{A measure $\mu$ is atomless if
    there are no atoms for the measure. An atom is a measurable set $A
    \subseteq \Omega$, with $\mu(A) >0$ and s.t.~for any $B \subseteq
    A$ with $\mu(B) <\mu(A)$ implies $\mu(B) = 0$. In particular, any
    absolutely continuous measure (wrt the Lebesgue measure) is
    atomless.}  probability measures over $\Omega$. The
$L^2$-Wasserstein distance between $\mu, \nu \in \mathcal{P}(\Omega)$
is given by:
\begin{align}
  W_2^2(\mu, \nu) = \min_{\pi \in \Pi(\mu, \nu)} \int_{\Omega \times
    \Omega} |x-y|^2 ~d\pi(x,y),
	\label{eq:Kantorovich_Formulation_W2}
\end{align}
where $\Pi(\mu, \nu)$ is the space of joint probability measures over
$\Omega \times \Omega$ with marginals $\mu$ and $\nu$. The definition
of $L^2$-Wasserstein distance in~\eqref{eq:Kantorovich_Formulation_W2}
follows from the so-called Kantorovich formulation of optimal
transport. An alternative formulation of this problem, called the
Monge formulation of optimal transport, is given below:
\begin{align}
  W_2^2(\mu, \nu) = \min_{\substack{T: \Omega \rightarrow \Omega \\
      T_{\#}\mu = \nu }} \int_{\Omega} |x - T(x)|^2 ~d\mu(x).
  \label{eq:Monge_Formulation_W2}
\end{align}
In the Monge formulation~\eqref{eq:Monge_Formulation_W2}, the
minimization is carried out over the space of maps $T: \Omega
\rightarrow \Omega$ for which the probability measure $\nu$ is
obtained as the pushforward of $\mu$; i.e.~$\mathcal{T}_{\#} \mu =
\nu$.  This can be viewed as a deterministic formulation of optimal
transport, where the transport is carried out by a map, whereas the
Kantorovich formulation \eqref{eq:Kantorovich_Formulation_W2} can be
seen as a problem relaxation, where the transport plan is described by
a joint probability measure $\pi$ over $\Omega \times \Omega$, with
$\mu$ and $\nu$ as its marginals.  It is to be noted that the Monge
formulation does not always admit a solution, while the Kantorovich
problem does. Roughly speaking, the Kantorovich formulation is the
``minimal'' extension of the Monge formulation, as both problems
attain the same infimum. The result~\cite[Theorem 1.17]{FS:15}
guarantees the existence and uniqueness of minimizers for both
problems, when $\mu$ is atomless.

It holds that~\cite{PB:13} that $(\mathcal{P}({\Omega}), W_2)$ is a
complete metric space. 
In addition, $W_2$ metrizes the convergence wrt the so-called weak
topology (cf.~\cite[Theorem~6.9]{CV:08}). Due to this, we will
indistinctively refer to convergence of measures in the weak topology
as convergence wrt $W_2$.  Finally, it can be shown that
$\mathcal{P}({\Omega})$ is compact wrt the weak topology;
see Appendix.

\subsection{Regularity of functionals on the Wasserstein space}
Results in convex analysis can be appropriately generalized to
functionals on the $L^2$-Wasserstein space $(\mathcal{P}(\Omega),
W_2)$, see~\cite{LA-NG-GS:08} for a detailed treatment. Before we can
define any notion of convexity on $(\mathcal{P}(\Omega), W_2)$, we
need to introduce an appropriate notion of interpolation, which plays
a similar role to the notion of segments in Euclidean space.

\begin{definition}[\bf \emph{Generalized displacement interpolation}]
\label{defn:generalized_displacement_interpolation}
Let $\Omega$ be a compact subset of $\real^d$, $\mu, \nu \in
\mathcal{P}(\Omega)$, and $\theta \in
\supscr{\mathcal{P}}{a}(\Omega)$ be an atomless probability
measure. Let $T_{\theta \rightarrow \mu}: \Omega \rightarrow \Omega$
and $T_{\theta \rightarrow \nu}: \Omega \rightarrow \Omega$ be the
optimal transport maps from $\theta$ to $\mu$, and $\theta$ to $\nu$
resp.~in the $L^2$-Wasserstein space over $\Omega$. A
\emph{(generalized) displacement interpolant} of $\mu$ and $\nu$
\emph{w.r.t. $\theta$} is given by $\gamma_t = \left( (1-t) T_{\theta
    \rightarrow \mu} + t T_{\theta \rightarrow \nu} \right)_{\#}
\theta$, for $t \in [0,1]$.
\end{definition}
We refer the reader to~\cite[Chapter~9.2]{LA-NG-GS:08} for a
detailed discussion of the above notion and its
motivation. 
It can be shown that, for a compact and convex
$\Omega \subset \real^d$, the space $\mathcal{P}(\Omega)$ is
geodesically convex w.r.t. the notion of generalized displacement
interpolation for any absolutely continuous reference measure~$\theta
\in \supscr{\mathcal{P}}{a}(\Omega)$. 
Now, we introduce the following standard definition on the
(generalized) geodesic convexity of functionals on the
$L^2$-Wasserstein space~$(\mathcal{P}(\Omega), W_2)$.  We note that
this is a particular case of a more general definition of convexity
in~\cite[Chapter~9]{LA-NG-GS:08} (Definition 9.2.4 for $\lambda = 0$.) 
\begin{definition}[\bf \emph{(Generalized) Geodesic convexity}]
\label{defn:generalized_geodesic_convexity}
Let $\Omega \subseteq \real^d$ be a compact and convex set, and let
$\mu, \nu, \theta \in \supscr{\mathcal{P}}{a}(\Omega)$ be atomless probability
measures, for which there exist unique $T_{\theta \rightarrow \mu}:
\Omega \rightarrow \Omega$ and $T_{\theta \rightarrow \nu}: \Omega
\rightarrow \Omega$ optimal transport maps from $\theta$ to $\mu$ and
from $\theta$ to $\nu$ respectively, in $(\mathcal{P}(\Omega), W_2)$.
A functional $F: \mathcal{P}(\Omega) \rightarrow \real$ is
(generalized) geodesically convex w.r.t.~$\theta \in \mathcal{P}(\Omega)$
(resp.~(generalized) strictly geodesically convex w.r.t.~$\theta$) 
  if the following holds for every $t \in [0,1]$:
\begin{align*}
  F(\left((1-t) T_{\theta \rightarrow \mu} + t T_{\theta \rightarrow
      \nu}\right)_{\#} \theta) \leq (1-t) F(\mu) + t F(\nu).
         \end{align*}
         (resp.~the previous inequality holds with strict inequality).
 Furthermore, a functional $F$ is geodesically convex
 (resp.~strictly geodesically convex) 
 if it is generalized geodesically convex 
(resp.strictly generalized geodesically convex) 
 with respect to any atomless probability measure
 $\theta \in \supscr{\mathcal{P}}{a}(\Omega)$. 
\end{definition}

As in the finite-dimensional case, it can be shown that a geodesically
convex functional is also continuous, thus, it admits a
maximizer/minimizer over the compact $(\mathcal{P}(\Omega),W_2)$.  It
is possible to obtain a first-order characterization of convexity for
Fr\'echet differentiable functionals on atomless measures. The notion
of Fr\'echet differential in Wasserstein space~\cite{LA-NG-GS:08}
is an extension of the standard notion for
functions defined on $\real^d$. For the sake of completeness, we provide
a statement of the first-order convexity characterization: 
\begin{lemma}[\bf \emph{First-order convexity condition}] 
  \label{lemma:first_order_convexity} Let $\Omega
  \subseteq \real^d$ be compact and convex, $\mu, \nu, \theta \in
  \supscr{\mathcal{P}}{a}(\Omega)$ be atomless measures.  Let $F:
  \mathcal{P}(\Omega) \rightarrow \real$ be a Fr{\'e}chet
  differentiable and (generalized) geodesically convex functional. 
  Then, we have:
\begin{align}
  F(\nu) \geq F(\mu) + \int_{\Omega} \left \langle \xi_{\mu} (T_{\theta \rightarrow \mu}) ,
    T_{\theta \rightarrow \nu} - T_{\theta \rightarrow \mu} \right
  \rangle d\theta,
	\label{eq:first_order_convexity}
\end{align}
where~$\xi_{\mu}$ is the Fr{\'e}chet derivative of~$F$ at~$\mu$,
and $T_{\theta \rightarrow \mu}: \Omega \rightarrow \Omega$ and $T_{\theta
  \rightarrow \nu}: \Omega \rightarrow \Omega$ are optimal transport maps
from $\theta$ to $\mu$ and from $\theta$ to $\nu$ respectively.
\end{lemma}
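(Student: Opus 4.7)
The plan is to adapt the classical ``convexity implies first-order inequality'' argument to the Wasserstein setting: evaluate $F$ along the generalized displacement interpolant $\gamma_t = \left((1-t) T_{\theta \to \mu} + t T_{\theta \to \nu}\right)_{\#} \theta$ of $\mu$ and $\nu$ w.r.t.\ $\theta$, apply the assumed generalized geodesic convexity to obtain a linear upper bound in $t$, and then pass to the right derivative at $t = 0^+$, identifying the limit with the right-hand side of~\eqref{eq:first_order_convexity}.

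First, by the geodesic convexity of $F$ along $\gamma_t$ (with $\gamma_0 = \mu$ and $\gamma_1 = \nu$, as obtained by setting $t=0,1$ in Definition~\ref{defn:generalized_displacement_interpolation}), we have
\begin{align*}
F(\gamma_t) \leq (1-t) F(\mu) + t F(\nu), \qquad t \in [0,1].
\end{align*}
Subtracting $F(\mu)$ from both sides and dividing by $t>0$ yields
\begin{align*}
\frac{F(\gamma_t) - F(\mu)}{t} \leq F(\nu) - F(\mu), \qquad t \in (0,1].
\end{align*}

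Second, the main technical step is to identify $\lim_{t \to 0^+} (F(\gamma_t) - F(\mu))/t$ with the right-hand side of \eqref{eq:first_order_convexity}. Setting $S_t = T_{\theta \to \mu} + t(T_{\theta \to \nu} - T_{\theta \to \mu})$ so that $\gamma_t = (S_t)_{\#}\theta$, I would re-express $\gamma_t$ as a perturbation of $\mu$ itself: writing $\mu = (T_{\theta\to\mu})_{\#} \theta$ and using the atomlessness of $\mu$ together with~\cite[Theorem~1.17]{FS:15} to invert $T_{\theta \to \mu}$ $\mu$-almost everywhere, one has $\gamma_t = (\id + t\,\psi)_{\#} \mu$, where $\psi(y) = T_{\theta \to \nu}\bigl(T_{\theta \to \mu}^{-1}(y)\bigr) - y$. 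The definition of the Fr\'echet differential $\xi_\mu$ of $F$ at $\mu$ on the Wasserstein space then gives
\begin{align*}
\frac{d}{dt}\bigg|_{t=0^+} F(\gamma_t) = \int_\Omega \langle \xi_\mu(y), \psi(y) \rangle \, d\mu(y),
\end{align*}
and a change of variables back through $\mu = (T_{\theta \to \mu})_{\#} \theta$ converts the right-hand side into
\begin{align*}
\int_\Omega \langle \xi_\mu(T_{\theta \to \mu}), T_{\theta \to \nu} - T_{\theta \to \mu} \rangle \, d\theta.
\end{align*}
Combining this with the rearranged convexity inequality and letting $t \to 0^+$ yields~\eqref{eq:first_order_convexity}.

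I expect the hardest part to be making the chain-rule step fully rigorous. One must verify that the family $\{\gamma_t\}_{t \in [0,1]}$ is admissible as a variation along which the Fr\'echet differential acts linearly to first order, and that the $o(t)$ remainders are uniform enough for the limit to exist; i.e., that the reparametrization from $(S_t)_\# \theta$ to $(\id + t \psi)_\# \mu$ introduces only a higher-order discrepancy. The atomlessness of $\theta$ and $\mu$ is essential throughout, since it ensures that $T_{\theta \to \mu}$ and $T_{\theta \to \mu}^{-1}$ are well-defined almost everywhere, legitimizes the pushforward change of variables, and enables the identification of the one-sided derivative of $t \mapsto F(\gamma_t)$ with the Fr\'echet-differential expression in the Wasserstein sense.
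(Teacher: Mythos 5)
The paper does not actually prove this lemma: it is stated ``for the sake of completeness'' as a standard first-order characterization imported from~\cite{LA-NG-GS:08}, and no argument appears in the body or the appendix. Your proposal supplies the standard derivation, and it is essentially correct: restrict $F$ to the generalized displacement interpolant $\gamma_t$, use convexity to bound the difference quotient $\bigl(F(\gamma_t)-F(\mu)\bigr)/t$ by $F(\nu)-F(\mu)$, and identify the limit as $t\to 0^+$ with the integral on the right-hand side of~\eqref{eq:first_order_convexity} via the change of variables $\gamma_t=(\id+t\psi)_{\#}\mu$ with $\psi = T_{\theta\to\nu}\circ T_{\theta\to\mu}^{-1}-\id$. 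The reparametrization itself is legitimate since $S_t=(\id+t\psi)\circ T_{\theta\to\mu}$ and $T_{\theta\to\mu}$ is invertible $\mu$-a.e.\ under the atomlessness hypotheses, and the final change of variables back to an integral against $\theta$ is exactly the pushforward identity.

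The one point you correctly flag as delicate is the real one: the paper's definition of the Fr\'echet derivative is formulated along curves $\epsilon\mapsto((1-\epsilon)\id+\epsilon T_{\mu_0\to\mu_1})_{\#}\mu_0$, i.e.\ geodesics emanating from $\mu$, whereas $(\id+t\psi)_{\#}\mu$ is generally \emph{not} such a curve because $\id+t\psi$ need not be the optimal map from $\mu$ to $\gamma_t$. So the chain-rule step $\frac{d}{dt}\big|_{t=0^+}F(\gamma_t)=\int_\Omega\langle\xi_\mu,\psi\rangle\,d\mu$ does not follow verbatim from the paper's definition; one either needs the differential to act linearly on arbitrary tangent displacement directions (which the paper effectively builds in through its definition of the directional derivative $D_{\mathbf{v}}F(\mu_0)=\int_\Omega\langle\xi,\mathbf{v}\rangle\,d\mu$), or one invokes the convexity-along-generalized-geodesics machinery of~\cite[Ch.~10]{LA-NG-GS:08}. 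With that understanding made explicit, your argument closes the gap the paper leaves open, and it is the argument one would expect.
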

Finally, we define the notion of strong geodesic convexity of
Fr\'echet-differentiable functionals on~$(\mathcal{P}(\Omega), W_2)$:
  \begin{definition}[\bf \emph{Strong geodesic convexity of a
      functional on~$(\mathcal{P}(\Omega), W_2)$}]
\label{defn:strong_geodesic_convexity}
Let $\Omega \subseteq \real^d$ be compact and convex and $\mu, \nu,
\theta \in \supscr{\mathcal{P}}{a}(\Omega)$ be atomless probability
measures.  Let $F:\mathcal{P}(\Omega) \rightarrow \real$ be
Frech\'et-differentiable. Let~$\xi_{\mu}$ and~$\xi_{\nu}$
be the Fr{\'e}chet derivatives of $F$ evaluated at measures $\mu$
and~$\nu$, respectively. Then,~$F$ is \emph{strongly (geodesically)
  convex} w.r.t.~$\theta$ as reference measure 
  if there exists an~$m > 0$ such that:
\begin{align*}
 	F(\nu) &\geq  F(\mu) + \int_{\Omega} \left \langle \xi_{\mu} (T_{\theta \rightarrow \mu}) , T_{\theta \rightarrow \nu} - T_{\theta \rightarrow \mu} \right \rangle d\theta  \\
 											&\qquad + 	\frac{m}{2} \int_{\Omega} \left| T_{\theta \rightarrow \nu} - T_{\theta \rightarrow \mu} \right|^2 d\theta,
\end{align*}
where $T_{\theta \rightarrow \mu}$, $T_{\theta \rightarrow \nu}$
are the optimal transport maps from~$\theta$ to~$\mu$,~$\nu$
respectively.
\end{definition}
It can be shown that strong geodesic convexity
of functionals implies strict geodesic convexity and, thus, geodesic
convexity of functionals over $\supscr{\mathcal{P}}{a}(\Omega)$. It
can also be proven that a Fr{\'e}chet differentiable, strongly geodesically 
convex function admits a unique minimizer over
$\mathcal{P}(\Omega)$.  

Furthermore, we introduce the notion of $l$-smoothness that will be useful for
the development of gradient descent-based transport schemes  in the paper.  
\begin{definition}[\bf \emph{$l$-smoothness of functionals on $(\mathcal{P}(\Omega),W_2)$}]
\label{defn:l_smooth_functional}
  A functional $F : \mathcal{P}(\Omega) \rightarrow \real$ is called
  \emph{$l$-smooth} w.r.t. a base measure~$\theta \in \mathcal{P}^{\rm a}(\Omega)$ 
  if for any~$\mu, \nu \in \mathcal{P}^{\rm a}(\Omega)$, we have:
\begin{align*}
 	&\left| F(\nu) - F(\mu) - \int_{\Omega} \left \langle \xi_{\mu} (T_{\theta \rightarrow \mu}) , T_{\theta \rightarrow \nu} - T_{\theta \rightarrow \mu} \right \rangle d\theta \right|  \\
 	&\leq 	\frac{l}{2} \int_{\Omega} \left| T_{\theta \rightarrow \nu} - T_{\theta \rightarrow \mu} \right|^2 d\theta,
\end{align*}
where $\xi_{\mu}$ is the Fr{\'e}chet derivative of~$F$ 
evaluated at~$\mu$, and
$T_{\theta \rightarrow \mu}$, $T_{\theta \rightarrow \nu}$
are the optimal transport maps from~$\theta$ to~$\mu$,~$\nu$
respectively.
\end{definition}

\section{Macroscopic and particle descent schemes}
\label{sec:transport_models}
In this section, we present a (macroscopic) iterative descent scheme
in the space of probability measures $\mathcal{P}(\Omega)$ and
establish weak convergence to the minimizer under certain conditions.
Furthermore, we derive an equivalent (microscopic) characterization of
the descent scheme in $\Omega$.
More specifically, Theorem~\ref{thm:conv_prox_recursion_wasserstein} first establishes 
convergence of the proximal descent scheme~\eqref{eq:prox_recursion_wasserstein}
in~$\mathcal{P}(\Omega)$ under appropriate conditions.
In Theorem~\ref{thm:target_dynamics_euclidean} we establish the 
microscopic scheme in~$\Omega$ corresponding to~\eqref{eq:prox_recursion_wasserstein}.
We then note the hurdle to implementation of the above scheme
and circumvent it by an appropriate modification of the scheme
and establish its convergence in Theorem~\ref{thm:proximal_scheme_convergence_target}.
We refer to the Appendix for additional definitions and supporting results.

We consider the following proximal recursion in $\mathcal{P}(\Omega)$
starting from any absolutely continuous $\mu_0 \in
\mathcal{P}(\Omega)$:
\begin{align}
  \mu_{k+1} \in \arg \min_{\nu \in \mathcal{P}(\Omega)}
  \frac{1}{2\tau} W_2^2(\mu_k, \nu) + F(\nu).
  \label{eq:prox_recursion_wasserstein}
\end{align} In the remainder of this section, we will
  assume that $F$ is sufficiently regular, which ensures that the
  previous scheme is well defined and results in an atomless sequence
  of measures with negligible mass on $\partial
  \Omega$. \begin{assumption}[\bf \emph{Regularity conditions on
      $F$}] \label{ass:regularity}
   The functional~$F: \mathcal{P}(\Omega) \rightarrow \real$ 
    is $l$-smooth (in the sense of Definition~\ref{defn:l_smooth_functional} w.r.t. any base measure~$\theta \in \mathcal{P}^{\rm a}(\Omega)$)
    and strictly geodesically convex on $(\mathcal{P}(\Omega), W_2)$
    (in the sense of Definition~\ref{defn:generalized_geodesic_convexity} w.r.t. any base measure~$\theta \in \mathcal{P}^{\rm a}(\Omega)$).
    The Fr{\'e}chet derivative $\xi_\mu$ of~$F$
    is Lipschitz continuous on~$\Omega$ (with Lipschitz constant~$\lambda$)
    at any~$\mu \in \mathcal{P}^{\rm a}(\Omega)$.
    Furthermore, it satisfies the boundary condition 
    $\xi_{\mu} \cdot \mathbf{n} \geq 0$ on $\partial
    \Omega$ (where $\mathbf{n}$ is the outward normal to $\partial
    \Omega$) for any~$\mu \in \mathcal{P}^{\rm a}(\Omega)$.
  \end{assumption}

\begin{assumption}[\bf \emph{Atomless proximal descent  sequence}]
\label{ass:atomless_sequence_prox_wasserstein}
We assume that the sequence $\lbrace \mu_k \rbrace_{k \in \mathbb{N}}$
generated by \eqref{eq:prox_recursion_wasserstein} is such that $\mu_k
\in \supscr{\mathcal{P}}{a}(\Omega)$ for all $k \in
\mathbb{N}$. 
\end{assumption}
We remark here that sufficient regularity of the functional~$F$ 
and the atomlessness of~$\mu_0$ should guarantee validity of
Assumption~\ref{ass:atomless_sequence_prox_wasserstein}. 
Since we do not offer a characterization of the regularity
of~$F$ to this end, we retain Assumption~\ref{ass:atomless_sequence_prox_wasserstein} 
in establishing the following theorem:
\begin{theorem}[\bf \emph{Convergence of proximal recursion~\eqref{eq:prox_recursion_wasserstein}}]
  Let $\Omega \subseteq \real^d$ be a compact, convex set, and let $F:
  \mathcal{P}(\Omega) \rightarrow \real$ satisfy the
    regularity conditions of Assumption~\ref{ass:regularity}. 
  Let $\mu_0$ be an absolutely continuous measure. Under
  Assumption~\ref{ass:atomless_sequence_prox_wasserstein} on the
  generation of a proximal descent atomless sequence, and for $0 <
  \tau < 1/l$, the sequence $\lbrace \mu_k \rbrace_{k \in \mathbb{N}}$, 
  generated by the proximal recursion~\eqref{eq:prox_recursion_wasserstein}, 
  converges weakly to~$\mu^\star = \arg \min_{\nu \in \mathcal{P}(\Omega)}F(\nu)$ 
  as $k \rightarrow \infty$.
\label{thm:conv_prox_recursion_wasserstein}
\end{theorem}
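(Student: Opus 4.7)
The plan is to adapt a Jordan--Kinderlehrer--Otto-type argument to the generalized geodesic-convex setting. The core will be a ``three-point'' Wasserstein inequality obtained by combining the first-order optimality of the proximal step with the convexity characterization of Lemma~\ref{lemma:first_order_convexity} applied at the reference measure $\theta=\mu_k$. From this inequality I will extract both a Fej\'er-type contraction on $W_2^2(\mu_k,\mu^\star)$ and an $O(1/k)$ rate on $F(\mu_k)-F(\mu^\star)$, and then pass to the weak limit via compactness and strict convexity.

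\textbf{First-order optimality and three-point inequality.} Existence of a minimizer $\mu_{k+1}$ in \eqref{eq:prox_recursion_wasserstein} follows from weak compactness of $(\mathcal{P}(\Omega),W_2)$ and lower semicontinuity of $F$; under $l$-smoothness and $\tau<1/l$ the proximal objective is strictly geodesically convex, so $\mu_{k+1}$ is uniquely determined. Performing a first variation along pushforwards $(\mathrm{id}+\varepsilon\zeta)_\#\mu_{k+1}$, and using the boundary condition $\xi_\mu\cdot\mathbf{n}\ge 0$ from Assumption~\ref{ass:regularity} to eliminate surface contributions on $\partial\Omega$, yields
\[
\xi_{\mu_{k+1}}(x)=\frac{1}{\tau}\bigl(T_{\mu_{k+1}\to\mu_k}(x)-x\bigr),\qquad \mu_{k+1}\text{-a.e.}
\]
I would then apply Lemma~\ref{lemma:first_order_convexity} with $\theta=\mu_k$ (atomless by Assumption~\ref{ass:atomless_sequence_prox_wasserstein}) to compare $F(\mu^\star)$ and $F(\mu_{k+1})$. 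Substituting the optimality condition through the identity $T_{\mu_{k+1}\to\mu_k}\circ T_{\mu_k\to\mu_{k+1}}=\mathrm{id}$ ($\mu_k$-a.e.), applying the polarization $2\langle a,b\rangle=|a|^2+|b|^2-|a-b|^2$, and bounding $\int|T_{\mu_k\to\mu^\star}-T_{\mu_k\to\mu_{k+1}}|^2\,d\mu_k\ge W_2^2(\mu_{k+1},\mu^\star)$ via the coupling $(T_{\mu_k\to\mu^\star},T_{\mu_k\to\mu_{k+1}})_\#\mu_k$, I obtain
\[
F(\mu_{k+1})-F(\mu^\star)\le \frac{1}{2\tau}\bigl(W_2^2(\mu_k,\mu^\star)-W_2^2(\mu_{k+1},\mu^\star)-W_2^2(\mu_k,\mu_{k+1})\bigr).
\]

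\textbf{Contraction, rate, and weak limit.} Non-negativity of the left side gives the Fej\'er monotonicity $W_2^2(\mu_{k+1},\mu^\star)\le W_2^2(\mu_k,\mu^\star)-W_2^2(\mu_k,\mu_{k+1})$, so $\{W_2^2(\mu_k,\mu^\star)\}$ is non-increasing and $\sum_k W_2^2(\mu_k,\mu_{k+1})<\infty$. Dropping the negative term and telescoping gives $\sum_{k=0}^{K-1}(F(\mu_{k+1})-F(\mu^\star))\le W_2^2(\mu_0,\mu^\star)/(2\tau)$; combined with the monotone decrease $F(\mu_{k+1})\le F(\mu_k)$ obtained by testing \eqref{eq:prox_recursion_wasserstein} with $\nu=\mu_k$, this yields $F(\mu_K)-F(\mu^\star)\le W_2^2(\mu_0,\mu^\star)/(2\tau K)\to 0$. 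By weak compactness of $(\mathcal{P}(\Omega),W_2)$, every subsequence of $\{\mu_k\}$ admits a further weakly convergent subsequence with some limit $\mu_\infty$; continuity of $F$ (implied by its geodesic convexity over the compact Wasserstein space) forces $F(\mu_\infty)=F(\mu^\star)$, and strict geodesic convexity identifies $\mu_\infty=\mu^\star$, so the entire sequence converges weakly to $\mu^\star$.

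\textbf{Main obstacle.} I expect the most delicate step to be the rigorous derivation of the Euler--Lagrange condition: one must justify the first variation of $W_2^2(\mu_k,\cdot)$ along pushforwards of $\mu_{k+1}$, verify that $\mathrm{id}-T_{\mu_{k+1}\to\mu_k}$ represents the Wasserstein gradient of the squared distance, and control boundary effects on $\partial\Omega$ using the regularity of $\xi_\mu$ and the sign condition $\xi_\mu\cdot\mathbf{n}\ge 0$. This is where the $l$-smoothness of $F$, Lipschitz continuity of $\xi_\mu$, atomlessness of the iterates, and the restriction $\tau<1/l$ all play essential roles; everything downstream is essentially a clean algebraic consequence once the first-order condition is in hand.
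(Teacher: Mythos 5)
Your proposal is correct in substance but follows a genuinely different route from the paper. The paper's proof is qualitative: it shows $F(\mu_{k+1})\le F(\mu_k)-\frac{1}{2\tau}W_2^2(\mu_k,\mu_{k+1})$ by testing the proximal problem with $\nu=\mu_k$, confines the sequence to the compact, geodesically convex sublevel set $\mathcal{S}(\mu_0)$, extracts a weakly convergent subsequence $\mu_{k_\ell}\to\overline{\mu}$, proves \emph{uniform} convergence of the proximal functionals $G_{\mu_{k_\ell}}$ to $G_{\overline{\mu}}$ via the triangle inequality for $W_2$, and deduces that $\overline{\mu}$ is a fixed point of its own proximal map, hence a critical point and, by strict geodesic convexity, equal to $\mu^\star$; the Euler--Lagrange condition only enters at the very last step. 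You instead run the quantitative JKO/proximal-point argument: first-order optimality $\xi_{\mu_{k+1}}=\frac{1}{\tau}(T_{\mu_{k+1}\to\mu_k}-\id)$ (which the paper derives separately in Theorem~\ref{thm:target_dynamics_euclidean}), combined with Lemma~\ref{lemma:first_order_convexity} at $\theta=\mu_k$ and polarization, gives the three-point inequality, whence Fej\'er monotonicity of $W_2^2(\mu_k,\mu^\star)$ and an explicit $O(1/k)$ rate on $F(\mu_k)-F(\mu^\star)$ that the paper does not obtain. The price is two additional technical commitments the paper's route avoids: (i) you must apply the first-order convexity inequality with $\nu=\mu^\star$, so $\mu^\star$ must itself be an admissible (atomless) endpoint for the generalized geodesic based at $\mu_k$ --- this is consistent with the paper's standing assumptions but should be stated; and (ii) the substitution step uses $T_{\mu_{k+1}\to\mu_k}\circ T_{\mu_k\to\mu_{k+1}}=\id$ $\mu_k$-a.e., which requires slightly more than atomlessness of the iterates (absolute continuity, as invoked elsewhere in the paper via \cite[Theorem~1.17]{FS:15}). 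Granting the regularity the paper already assumes throughout, both steps go through, and your argument is a valid --- indeed sharper --- alternative proof.
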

\begin{proof}
  It follows that:
\begin{align*}
  &\frac{1}{2\tau} W_2^2(\mu_k, \mu_{k+1}) + F(\mu_{k+1}) \leq F(\mu_k) \\
  &\Longleftrightarrow F(\mu_{k+1}) \leq F(\mu_k) - \frac{1}{2\tau}
  W_2^2(\mu_k, \mu_{k+1}).
\end{align*}
This implies that for~$\mu_{k} \neq \mu_{k+1}$, we have~$F(\mu_{k+1})
< F(\mu_k)$ and the sequence $\lbrace F(\mu_k) \rbrace_{k\in
  \mathbb{N}}$ is monotonically strictly decreasing. In addition,
$\lbrace \mu_k \rbrace_{k \in \mathbb{N}}$ is contained in the
sublevel set $\mathcal{S}(\mu_0)$ of $F(\mu_0)$.  

From Lemma~\ref{lemma:compactness_sublevel_F} 
in the Appendix, $\mathcal{S}(\mu_0)$ is geodesically
convex and compact in the $L^2$-Wasserstein space
$(\mathcal{P}(\Omega), W_2)$. Thus, there is a weakly convergent
subsequence $\{\mu_{k_\ell}\}\rightarrow_\ell \overline{\mu} \in
\mathcal{S}(\mu_0)$.  Consider the functional $G_{\mu}$
from~\eqref{eq:prox_recursion_wasserstein}, for $\mu \in
\mathcal{P}(\Omega)$, such that $G_{\mu}(\nu) = \frac{1}{2\tau}
W_2^2(\mu,\nu) + F(\nu)$.  First, note that
\begin{align*}
  &| G_{\mu_{k_\ell}}(\nu) - G_{\overline{\mu}}(\nu)| =
  \frac{1}{2\tau}| W_2^2(\overline{\mu},\nu) -
  W_2^2(\mu_{k_\ell},\nu)|
  \nonumber \\
  & = \frac{1}{2\tau} (W_2(\overline{\mu},\nu) +
  W_2(\mu_{k_\ell},\nu)) |W_2(\overline{\mu},\nu) -
  W_2(\mu_{k_\ell},\nu)|, 
\end{align*}
for all $\ell$. Due to the triangular inequality, for all $\nu$,
$|W_2(\overline{\mu},\nu) - W_2(\mu_{k_\ell},\nu)| \le
W_2(\mu_{k_\ell},\overline{\mu}) $. Therefore,

{\small
\begin{align*}
  & | G_{\mu_{k_\ell}}(\nu) - G_{\overline{\mu}}(\nu)| \le
  \frac{1}{2\tau} (W_2(\overline{\mu},\nu) + W_2(\mu_{k_\ell},\nu))
  W_2(\overline{\mu},\mu_{k_\ell}).
\end{align*}
}
In addition, $\mathcal{S}(\mu_0)$ is a compact set
and $W_2$ is a continuous functional, then there is a constant $M$ such that
$W_2(\overline{\mu},\nu) + W_2(\mu_{k_\ell},\nu) \leq M$ and we have:

{\small
\begin{align*}
  & | G_{\mu_{k_\ell}}(\nu) - G_{\overline{\mu}}(\nu)| \le \frac{M}{2\tau}
  W_2(\overline{\mu},\mu_{k_\ell}),
\end{align*}
}for all $\nu$. Since $\mu_{k_\ell} \rightarrow_\ell \overline{\mu}$,
this implies the uniform convergence of the functionals
$G_{\mu_{k_\ell}}(\nu)$ to $G_{\overline{\mu}}(\nu)$.  In particular,
this implies that for all $\epsilon >0$, there is an $\ell_0$ such
that for all $\ell \ge \ell_0$, we have 
\begin{align*}
  | G_{\mu_{k_\ell}}(\nu) - G_{\overline{\mu}}(\nu)|< \epsilon,
\end{align*}
for all $\nu$. Let $\overline{\mu}^+ =\arg \min_\nu
G_{\overline{\mu}}(\nu)$, and recall that $\mu_{k_\ell +1} = \arg
\min_\nu G_{\mu_{k_\ell}}(\nu)$. Then, by  the $\min$ properties:
\begin{align*}
  & G_{\mu_{k_\ell}}(\mu_{k_\ell +1}) \le G_{\mu_{k_\ell}}(\nu) <
  G_{\overline{\mu}}(\nu) + \epsilon\\
  & \quad \qquad \Longrightarrow \, G_{\mu_{k_\ell}}(\mu_{k_\ell +1}) \le
  G_{\overline{\mu}}(\overline{\mu}^+) + \epsilon,\\
  & G_{\overline{\mu}}(\overline{\mu}^+) - \epsilon <
  G_{\overline{\mu}}(\nu) - \epsilon \le G_{\mu_{k_\ell}}(\nu)\\
   & \quad \qquad \Longrightarrow \,
   G_{\overline{\mu}}(\overline{\mu}^+) - \epsilon  \le G_{\mu_{k_\ell}}(\mu_{k_\ell +1}).
\end{align*}
That is, we have $|G_{\mu_{k_\ell}}(\mu_{k_\ell +1})
-G_{\overline{\mu}}(\overline{\mu}^+) |\le \epsilon$ for all $\ell \ge
\ell_0$.  The fact that  $\overline{\mu}$ is a fixed point for
$G_{\overline{\mu}}(\nu)$ now follows from the set of inequalities:
\begin{align*}
  G_{\overline{\mu}}(\overline{\mu}^+) \le 
  G_{\overline{\mu}}(\overline{\mu}) = F(\overline{\mu}) \le
  G_{\mu_{k_\ell}}(\mu_{k_{\ell} +1})< F(\mu_{k_\ell})
\end{align*}
The gap $G_{\mu_{k_\ell}}(\mu_{k_\ell +1})
-G_{\overline{\mu}}(\overline{\mu}^+)$ can be made arbitrarily small
by increasing $\ell$, so it must be that
$G_{\overline{\mu}}(\overline{\mu}) = F(\overline{\mu}) =
G_{\overline{\mu}}(\overline{\mu}^+)$, which implies $\overline{\mu}^+
= \overline{\mu}$ is the solution to the minimization problem of
$G_{\overline{\mu}}$ and satisfies $\nabla \left(\frac{\delta
    G}{\delta \nu} \right)_{\overline{\mu}} = 0$. The equation $\nabla
\left(\frac{\delta G}{\delta \nu} \right)_{\overline{\mu}} = 0$ is
equivalent to $\frac{1}{\tau}\nabla \phi_{\overline{\mu} \rightarrow
  \overline{\mu}} + \nabla \left(\frac{\delta F}{\delta \nu}
\right)_{\overline{\mu}} = 0$. Since $\nabla\phi_{\overline{\mu}
  \rightarrow \overline{\mu}} =0$, then $\overline{\mu}$ is a
minimizer of $F$, and from the strict geodesic convexity of
$F$ we get that the minimizer is unique and $\overline{\mu} = \mu^\star$.
Note that we can apply this reasoning to all the
accumulation points $\tilde{\mu}$ of the sequence $\{\mu_{k}\}$. 
Since all the convergent subsequences of $\{\mu_{k}\}$ have the 
same limit $\mu^\star$ and $\{\mu_{k}\}$ is contained in 
$\mathcal{S}(\mu_0)$ which is compact, 
we conclude that the whole sequence 
$\{\mu_{k}\}$ converges to $\mu^\star$ in $W_2$, i.e., weakly
as $k \rightarrow \infty$. \oprocend
\end{proof} 
\begin{remark}[\bf \emph{Squared-Wasserstein distance as objective functional}]
We now consider the case where the $L^2$-Wasserstein distance from the target measure~$\mu^*$ 
is chosen as the objective functional, i.e., $F(\nu) = \frac{1}{2} W_2^2(\nu, \mu^*)$.
We note that~$F$ is strictly (generalized) geodesically convex only w.r.t.~$\mu^*$
as the reference measure. This violates the regularity assumption~\ref{ass:regularity} 
(where we let~$F$ to be strictly (generalized) geodesically convex w.r.t. any 
(atomless) reference measure) and presentes a hurdle to
the application of Theorem~\ref{thm:conv_prox_recursion_wasserstein}. 
However, this hurdle can be mitigated as follows.
Let $G_{\mu_k}(\nu) = \frac{1}{2\tau} W_2^2(\mu_k, \nu) + F(\nu)$.  
The Fr{\'e}chet derivative of $G_{\mu_k}$ is given by $\nabla \left(
  \left. \frac{\delta G_{\mu_k}(\nu)}{\delta \nu} \right|_{\nu}
\right) = \frac{1}{\tau} \nabla \phi_{\nu \rightarrow \mu_k} + \nabla
\phi_{\nu \rightarrow \mu^*}$.  Moreover, at the critical point
$\mu_{k+1}$ of $G_{\mu_k}$ we have $\frac{1}{\tau} \nabla
\phi_{\mu_{k+1} \rightarrow \mu_k} + \nabla \phi_{\mu_{k+1}
  \rightarrow \mu^*} = \frac{1}{\tau} \left( \id - T_{\mu_{k+1}
    \rightarrow \mu_k} \right) +\left( \id - T_{\mu_{k+1} \rightarrow
    \mu^*} \right) = 0$, which implies that $\left( T_{\mu_{k+1}
    \rightarrow \mu_k} - \id \right) = \tau \left( \id - T_{\mu_{k+1}
    \rightarrow \mu^*} \right)$.  We then have $W_2(\mu_k, \mu_{k+1})
= \tau W_2(\mu_{k+1}, \mu^*)$.
For any (and only) $\nu$ on the geodesic between $\mu_k$ and $\mu^*$,
we have $W_2(\mu_k, \mu^*) = W_2(\mu_k, \nu) + W_2(\nu, \mu^*)$
(wherein the triangle inequality is an equality), and this is the case
if and only if $\int_{\Omega} \left \langle \id - T_{\nu \rightarrow
    \mu_k}, T_{\nu \rightarrow \mu^*} - \id \right \rangle d\nu = 
W_2(\mu_k, \nu) W_2(\nu, \mu^*)$. We see that this is
  indeed the case for $\nu = \mu_{k+1}$,  from
which we infer that $\mu_{k+1}$ lies on the geodesic between $\mu_k$
and $\mu^*$.  We therefore get that $\lbrace \mu_k \rbrace_{k \in
  \mathbb{N}}$ lies on the geodesic connecting $\mu_0$ and $\mu^*$.
Consequently, need only be concerned with the geodesic convexity of~$F$
along the geodesic connecting~$\mu_0$ and~$\mu^*$.
Now, from Proposition~\ref{prop:strictly_convex_C_f} in Appendix~\ref{app:agg_obj_func} 
it follows that $W_2^2(\cdot, \mu_k)$ is generalized geodesically convex
with reference measure $\mu_k$, and similarly $W_2^2(\cdot, \mu^*)$ is
generalized geodesically convex with reference measure $\mu^*$,
and the two measures $\mu_k$ and $\mu^*$ are interchangeable as reference
measures along the geodesic between them.  It then follows that $\mu_k$
can be chosen as the reference measure and the arguments in the
proof of Theorem~\ref{thm:conv_prox_recursion_wasserstein} apply.
\end{remark}
The implementation of~\eqref{eq:prox_recursion_wasserstein} can be
challenging because involves the solution of an infinite-dimensional
optimization problem. To address this, we determine the stochastic process in
$\Omega$ that equivalently describes the 
recursion~\eqref{eq:prox_recursion_wasserstein}.
More precisely, consider a proximal recursion in~$\Omega$ from an
initial condition $x_0 \in \Omega$:
\begin{align}
  x_{k+1} \in \arg \min_{z \in \Omega} \frac{1}{2\tau} | x_k - z |^2 +
   f_k(z),
  \label{eq:prox_recursion_euclidean}
\end{align}
where~$\lbrace f_k \rbrace_{k \in \mathbb{N}}$ is a sequence of
functions on~$\Omega$. Suppose that the initial condition $x_0$ is in
fact a random variable distributed according to $\mu_0$ (denoted $x_0 \sim \mu_0$). 
We are interested in defining the process
in~$\Omega$, through an appropriate choice of $\lbrace f_k \rbrace_{k
  \in \mathbb{N}}$, which results in a consistent transport of the
initial measure~$\mu_0$ according to the
recursion~\eqref{eq:prox_recursion_wasserstein}.
\begin{theorem}[\bf \emph{Target dynamics in $\Omega$}]
  \label{thm:target_dynamics_euclidean} Let $\Omega \subseteq \real^d$
  be a compact, convex set, and let $F:\Omega \longrightarrow \real$
  satisfy the  regularity conditions of
    Assumption~\ref{ass:regularity}.
  Under Assumption~\ref{ass:atomless_sequence_prox_wasserstein}, the
  proximal recursion~\eqref{eq:prox_recursion_wasserstein}, for $0<
  \tau < 1/l$, starting from $\mu_0 \in \mathcal{P}^a(\Omega)$ is
  obtained as the transport of $\mu_0$
  by~\eqref{eq:prox_recursion_euclidean} with $x_0 \sim \mu_0$ and
  $f_k = \left. \frac{\delta F}{\delta \nu}\right|_{\mu_{k+1}}$, for
  all $k \in \mathbb{N}$.
\end{theorem}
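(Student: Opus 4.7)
The plan is to match the first-order optimality conditions of the Wasserstein proximal step~\eqref{eq:prox_recursion_wasserstein} and the Euclidean proximal step~\eqref{eq:prox_recursion_euclidean}, and thereby identify the Euclidean iterate $x_{k+1}$ as the image of $x_k$ under the optimal transport map from $\mu_k$ to $\mu_{k+1}$. Once this identification is in hand, the pushforward property of optimal transport propagates laws along the recursion: starting from $x_0 \sim \mu_0$, an induction on $k$ yields $x_k \sim \mu_k$ for every $k \in \mathbb{N}$.

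On the macroscopic side, the Fréchet derivative of $\nu \mapsto \tfrac{1}{2\tau}W_2^2(\mu_k,\nu)$ at an atomless $\mu_{k+1}$ has gradient $\tfrac{1}{\tau}(\operatorname{id} - T_{\mu_{k+1} \to \mu_k})$, so the criticality of $\mu_{k+1}$ for $G_{\mu_k}(\nu) = \tfrac{1}{2\tau} W_2^2(\mu_k,\nu) + F(\nu)$ reads
\begin{equation*}
\tfrac{1}{\tau}\bigl(\operatorname{id} - T_{\mu_{k+1}\to\mu_k}\bigr) + \nabla \xi_{\mu_{k+1}} = 0 \quad \mu_{k+1}\text{-a.e.,}
\end{equation*}
which I rearrange to $T_{\mu_{k+1}\to\mu_k}(y) = y + \tau \nabla \xi_{\mu_{k+1}}(y)$. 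Existence and uniqueness of $T_{\mu_{k+1}\to\mu_k}$, and hence of its inverse $T_{\mu_k\to\mu_{k+1}}$, follow from Brenier's theorem under Assumption~\ref{ass:atomless_sequence_prox_wasserstein}, while the boundary condition in Assumption~\ref{ass:regularity} rules out boundary multipliers. On the microscopic side, the objective $z \mapsto \tfrac{1}{2\tau}|x_k - z|^2 + \xi_{\mu_{k+1}}(z)$ is $(\tfrac{1}{\tau}-l)$-strongly convex once I translate the $l$-smoothness of $F$ (Definition~\ref{defn:l_smooth_functional}) into $l$-Lipschitzness of $\nabla \xi_{\mu_{k+1}}$ on $\Omega$; for $\tau<1/l$ it thus admits a unique minimizer $x_{k+1}$ characterized by $x_k = x_{k+1} + \tau \nabla \xi_{\mu_{k+1}}(x_{k+1})$, with the boundary term inactive by the same condition on $\xi_{\mu_{k+1}}$.

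Comparing the two identities, $x_{k+1}$ is precisely the preimage of $x_k$ under the map $y \mapsto y + \tau \nabla \xi_{\mu_{k+1}}(y) = T_{\mu_{k+1}\to\mu_k}(y)$, so $x_{k+1} = T_{\mu_k\to\mu_{k+1}}(x_k)$. Under the inductive hypothesis $x_k \sim \mu_k$, the pushforward property gives $x_{k+1} \sim (T_{\mu_k\to\mu_{k+1}})_\# \mu_k = \mu_{k+1}$, closing the induction. The main technical obstacle will be the rigorous passage from the integrated first-order condition of Lemma~\ref{lemma:first_order_convexity} to the pointwise Euler--Lagrange identity displayed above, together with proving global invertibility of $\operatorname{id}+\tau\nabla\xi_{\mu_{k+1}}$ on $\Omega$; for the latter I expect to exploit that the potential $y \mapsto \tfrac{1}{2}|y|^2 + \tau \xi_{\mu_{k+1}}(y)$ is strictly convex under $\tau<1/l$, so that its gradient is a Brenier-type map transporting $\mu_{k+1}$ to $\mu_k$ and admits a well-defined inverse on $\operatorname{supp}(\mu_k)$.
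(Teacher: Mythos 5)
Your proposal follows essentially the same route as the paper's proof: derive the Euler--Lagrange condition $T_{\mu_{k+1}\to\mu_k} = \id + \tau\,\xi_{\mu_{k+1}}$ for the Wasserstein step, show the Euclidean proximal map is its inverse (with the boundary condition on the Fr\'echet derivative ruling out boundary minimizers), and conclude that the law of $x_k$ is propagated by pushforward. The only caveats are cosmetic: the paper treats the boundary case $y \in \partial\Omega$ (where the map degenerates to the identity) explicitly, and the Lipschitz continuity of $\xi_\mu$ on $\Omega$ is a separate hypothesis in Assumption~\ref{ass:regularity} rather than a consequence of the Wasserstein $l$-smoothness of Definition~\ref{defn:l_smooth_functional}.
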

\begin{proof}
  We rewrite the single-step update
  in~\eqref{eq:prox_recursion_wasserstein} from an absolutely continuous 
  probability measure $\mu \in \mathcal{P}(\Omega)$ as follows:
\begin{align}
  \mu^{+} = \arg \min_{\nu \in \mathcal{P}(\Omega)} \frac{1}{2\tau}
  W_2^2(\mu, \nu) + F(\nu).
	\label{eq:PGD_wasserstein}
\end{align}
From Lemma~\ref{lemma:strongly_convex_PGD} the minimizer~$\mu^{+}$ in
\eqref{eq:PGD_wasserstein} is unique. 
Let $\{\mathbf{v}_\epsilon \}$ be a smooth
  one-parameter family of Lipschitz continuous vector fields such that
  $\mathbf{v}_0 = \mathbf{v}$, where $\mathbf{v}$ is
  any Lipschitz vector field on $\Omega$. Now, define a one-parameter family of
absolutely continuous probability measures $\lbrace \nu_{\epsilon}
\rbrace_{\epsilon \in \real}$ by means of $\partial_{\epsilon}
\nu_{\epsilon} + \nabla \cdot (\nu_{\epsilon} \mathbf{v}_{\epsilon}) =
0$, subject to $\mathbf{v}_{\epsilon} \cdot \mathbf{n} = 0$, and such
that $\nu_{0} = \mu^{+}$. Since~$\mu^{+}$ is a critical point of the
objective function in \eqref{eq:PGD_wasserstein}, from~\cite[Theorem~5.24]{FS:15}
we have:
\begin{align*}
 0 &= \left. \frac{d}{d\epsilon} \left( \frac{1}{2\tau} W_2^2(\mu,
      \nu_{\epsilon}) + F(\nu_{\epsilon}) \right) \right|_{\epsilon = 0}\\
    & = \frac{1}{\tau} \int_{\Omega} \left\langle \nabla
    		\phi_{\mu^{+} \rightarrow \mu}, \mathbf{v} \right\rangle d\mu^{+}
  			+ \int_{\Omega} \left\langle \xi , \mathbf{v} \right\rangle d\mu^{+} \\
  	&= \int_{\Omega} \left\langle \frac{1}{\tau} \nabla \phi_{\mu^{+}
      \rightarrow \mu} + \xi, \mathbf{v} \right\rangle d\mu^{+},
\end{align*}
where $\xi = \left. \nabla \left( \frac{\delta F}{\delta \nu}\right)
\right|_{\nu = \mu^{+}}$ and $\nabla \phi_{\mu^{+} \rightarrow \mu} =
\id - T_{\mu^{+} \rightarrow \mu}$, with $T_{\mu^{+} \rightarrow \mu}
: \Omega \rightarrow \Omega$ being the optimal transport map from
$\mu^{+}$ to $\mu$.  Since $\int_{\Omega} \left\langle \frac{1}{\tau}
  \nabla \phi_{\mu^{+} \rightarrow \mu} + \xi, \mathbf{v}
\right\rangle d\mu^{+} = 0$  for all $\mathbf{v}$,
 it implies that $\frac{1}{\tau} \nabla \phi_{\mu^{+}
  \rightarrow \mu} + \xi = 0$ ($\mu^{+}$ a.e. in $\Omega$), and we obtain:
\begin{align*}
  \frac{1}{\tau} \nabla \phi_{\mu^{+} \rightarrow \mu} + \xi =
  \frac{1}{\tau} \left( \id - T_{\mu^{+} \rightarrow \mu} \right) + \xi
  = 0,
\end{align*}
which implies that:
\begin{align}
  T_{\mu^{+}\rightarrow \mu} = \id + \tau \xi.
  \label{eq:transport_map}
\end{align}
Let $\varphi = \left. \left(\frac{\delta F}{\delta
      \nu}\right)\right|_{\nu = \mu^{+}}$. For any $y \in \Omega$ and
$\tau < 1/l$, consider: 
\begin{align}
  y^{+} = \arg \min_{z \in \Omega} \underbrace{\frac{1}{2\tau} |y-z|^2 + \varphi(z)}_{\triangleq g_y(z)}.
	\label{eq:PGD_euclidean}
\end{align}
The uniqueness of the minimizer above follows from 
the strong convexity of~$g_y$ for $\tau < 1/l$ (this
can be verified by following a similar procedure
as in the proof of Lemma~\ref{lemma:strongly_convex_PGD}, 
but now in the Euclidean space).
 If $y^{+} \in \mathring{\Omega}$ is a critical point of $g_y$ 
 in~\eqref{eq:PGD_euclidean}, then it
satisfies $y^{+} = y - \tau \nabla \varphi(y^{+})$. Since $\xi =
\nabla \varphi$, we can equivalently write $y^{+} = \left( \id + \tau
  \xi \right)^{-1}(y)$. That is, when the image of $y \in \Omega$
under the $\arg \min$ map in~\eqref{eq:PGD_euclidean} is a critical
point in the interior of $\Omega$, then it is also the
inverse image of $y$ under the optimal transport map $T_{\mu^{+}
  \rightarrow \mu}$.

Now, for a $y \in \mathring{\Omega}$, 
the inner product of the gradient of $g_y$ at any point $z
\in \partial \Omega$ on the boundary of $\Omega$ with the outward
normal $\mathbf{n}$ to $\partial \Omega$ at $z$ is given by $\nabla
g_y \cdot \mathbf{n} = \left( \frac{1}{\tau} (z-y) + \nabla \varphi
  (z) \right) \cdot \mathbf{n} = \frac{1}{\tau} (z-y) \cdot \mathbf{n}
> 0$, since $\nabla \varphi \cdot \mathbf{n} = 0$ and $z-y$ points
outward to $\Omega$ (as $z \in \partial \Omega$ and $y \in
\mathring{\Omega}$ and $\Omega$ is convex).  This implies that there
exists a point $\tilde{z}$ in the interior of $\Omega$ in a
neighborhood of $z$ such that $g_y(\tilde{z}) < g_y(z)$, which implies
that $z$ cannot be the minimizer.  Thus, for any $y \in
\mathring{\Omega}$, the minimizer of $g_y(z) = \frac{1}{2\tau} |y-z|^2
+ \varphi(z)$ cannot lie on the boundary $\partial \Omega$, and must
therefore lie in the interior of $\Omega$ and be a critical point of
the objective function $g_y$.  Now, when $y \in \partial \Omega$, if
$y^{+} \notin \mathring{\Omega}$, it must be that $y^{+} = y$
(otherwise we obtain a contradiction for the same reason as above, the
inner product of $\nabla g_y$ with the outward normal would be
strictly positive) and the $\mathrm{argmin}$ map (and the optimal transport map)
coincides with the identity map in this case.

It then follows that for any $y \in \Omega$, its image $y^{+}$ under the 
$\mathrm{argmin}$ map is
exactly its inverse image under the optimal transport map $T_{\mu^{+}
  \rightarrow \mu}$.  That is, the map in \eqref{eq:PGD_euclidean} is
the inverse of the optimal transport map $T_{\mu^{+} \rightarrow
  \mu}$.  Thus, we have that the map $T_{\mu^{+}\rightarrow \mu} = \id
+ \tau \xi$ is well-defined and so is its inverse,
it holds that $\left( T_{\mu^{+}\rightarrow \mu} \right)^{-1}_{\#} \mu = \left(
  \id + \tau \xi \right)^{-1}_{\#} \mu = \mu^{+}$,
and~\eqref{eq:PGD_wasserstein} is the lift to the space of probability
measures of \eqref{eq:PGD_euclidean}.

We therefore conclude that the proximal recursion
\eqref{eq:prox_recursion_wasserstein} starting from $\mu_0$ is the
transport of $\mu_0$ by \eqref{eq:prox_recursion_euclidean} with $x_0
\sim \mu_0$. \oprocend
\end{proof}
From a computational perspective,
Theorem~\ref{thm:target_dynamics_euclidean} still requires the
evaluation of the first variation $\frac{\delta F}{\delta \nu}$ at
$\mu_{k+1}$, the transported measure at the future time
instant~$k+1$. To circumvent this problem, we can alternatively consider
the dynamics~\eqref{eq:prox_recursion_euclidean} with the choice
of~$\tilde{f}_k = \left. \frac{\delta F}{\delta
    \nu}\right|_{\mu_{k}}$, which only requires the evaluation, at
time instant~$k$, of the first variation $\frac{\delta F}{\delta \nu}$
at $\mu_k$.
Consider the $l$-smooth, geodesically-convex (linear)
$\widetilde{F}(\nu) = \mathbb{E}_{\nu} \left[ \left. \frac{\delta F}{\delta \mu}
  \right|_{\mu_k} \right]$, for $\nu \in \Omega$,
which satisfies $\frac{\delta \widetilde{F}}{\delta \nu} = \left. \frac{\delta
    F}{\delta \mu}\right|_{\mu_{k}}$. It follows from
Theorem~\ref{thm:target_dynamics_euclidean} that the descent in
$\mathcal{P}(\Omega)$ corresponding
to~\eqref{eq:prox_recursion_euclidean} with $\tilde{f}_k = \left. \frac{\delta
    \widetilde{F}}{\delta \nu} \right|_{\mu_k}$~is given by:
\begin{align}
  \mu_{k+1} \in \arg \min_{\nu \in \mathcal{P}(\Omega)} \frac{1}{2\tau}
  W_2^2(\mu_k, \nu) + \mathbb{E}_{\nu} \left[ \left. \frac{\delta
        F}{\delta \mu} \right|_{\mu_k} \right].
	\label{eq:equiv_descent_wasserstein}
\end{align}

The convergence of~\eqref{eq:equiv_descent_wasserstein} can also be
established as follows:
\begin{theorem}[\bf \emph{Convergence of
    recursion~\eqref{eq:equiv_descent_wasserstein}}] Let $F: \Omega
  \rightarrow \real$ satisfy the
    regularity conditions of Assumption~\ref{ass:regularity}
    and let $\alpha = l + \lambda$. 
  The sequence
  $\lbrace \mu_k \rbrace_{k \in \mathbb{N}}$ obtained as the transport
  of measure~$\mu_0 \in \mathcal{P}^a(\Omega)$
  by~\eqref{eq:equiv_descent_wasserstein} with $\tau < 1/\alpha$, $x_0 \sim
  \mu_0$ and the choice $\tilde{f}_k = \left. \frac{\delta
      \widetilde{F}}{\delta \nu}\right|_{\mu_{k}}$, converges weakly
  to $\mu^\star = \arg\min_{\nu \in \mathcal{P}(\Omega)} F(\nu)$ as $k
  \rightarrow \infty$.
\label{thm:proximal_scheme_convergence_target}
\end{theorem}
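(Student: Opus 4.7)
The plan is to adapt the compactness-based argument of Theorem~\ref{thm:conv_prox_recursion_wasserstein} to the linearized (proximal gradient) scheme~\eqref{eq:equiv_descent_wasserstein}, with the principal new ingredient being a descent inequality on $F$ itself—rather than on the linear surrogate $\widetilde{F}$—that absorbs the linearization error. The extra Lipschitz constant $\lambda$ in the step-size bound $\tau < 1/(l+\lambda)$ is precisely the price of this linearization.

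The first step is to establish the descent inequality
\begin{align*}
F(\mu_{k+1}) - F(\mu_k) \leq -\bigl(\tfrac{1}{2\tau} - \tfrac{l+\lambda}{2}\bigr) W_2^2(\mu_k,\mu_{k+1}).
\end{align*}
Writing $T = T_{\mu_k \to \mu_{k+1}}$ and choosing $\theta = \mu_k$ in Definition~\ref{defn:l_smooth_functional}, the $l$-smoothness of $F$ gives $F(\mu_{k+1}) - F(\mu_k) \leq \int_\Omega \langle \xi_{\mu_k}, T - \id\rangle\, d\mu_k + \tfrac{l}{2}W_2^2(\mu_k,\mu_{k+1})$. Since $\widetilde{F}$ is linear in $\nu$, a Taylor bound using the $\lambda$-Lipschitz property of $\xi_{\mu_k} = \nabla \tilde f_k$ on $\Omega$ yields $\widetilde{F}(\mu_{k+1}) - \widetilde{F}(\mu_k) \geq \int_\Omega \langle \xi_{\mu_k}, T-\id\rangle\, d\mu_k - \tfrac{\lambda}{2}W_2^2(\mu_k,\mu_{k+1})$. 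On the other hand, the optimality of $\mu_{k+1}$ in~\eqref{eq:equiv_descent_wasserstein} compared to the candidate $\nu = \mu_k$ gives $\widetilde{F}(\mu_{k+1}) - \widetilde{F}(\mu_k) \leq -\tfrac{1}{2\tau} W_2^2(\mu_k,\mu_{k+1})$. Chaining these three estimates produces the desired inequality, which is strictly negative when $\mu_{k+1} \neq \mu_k$ and $\tau < 1/\alpha$.

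With the descent inequality in hand, the remaining argument closely mirrors that of Theorem~\ref{thm:conv_prox_recursion_wasserstein}. The sequence $\{F(\mu_k)\}$ is monotonically decreasing, so $\{\mu_k\}$ is contained in the sublevel set $\mathcal{S}(\mu_0)$, which is compact in $(\mathcal{P}(\Omega), W_2)$ by Lemma~\ref{lemma:compactness_sublevel_F}. Extracting a weakly convergent subsequence $\mu_{k_\ell} \to \overline{\mu}$ and noting that the descent bound also forces $W_2(\mu_{k_\ell}, \mu_{k_\ell+1}) \to 0$, I would then run a uniform-convergence argument—analogous to the treatment of $G_\mu$ in Theorem~\ref{thm:conv_prox_recursion_wasserstein}, but now for the surrogate $H_\mu(\nu) = \tfrac{1}{2\tau}W_2^2(\mu,\nu) + \mathbb{E}_\nu[\,\delta F/\delta \mu|_{\mu}\,]$—to conclude that $\overline{\mu}$ is a fixed point of $\arg\min_\nu H_{\overline{\mu}}(\cdot)$. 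The first-order optimality condition at this fixed point reduces, via the identity $\nabla \phi_{\overline{\mu}\to\overline{\mu}} = 0$, to $\xi_{\overline{\mu}} = 0$ on $\supp(\overline{\mu})$; invoking Lemma~\ref{lemma:first_order_convexity} together with the strict geodesic convexity of $F$ then identifies $\overline{\mu}$ as the unique global minimizer $\mu^\star$. Since every accumulation point of $\{\mu_k\}$ in the compact set $\mathcal{S}(\mu_0)$ must coincide with $\mu^\star$, the full sequence converges weakly to $\mu^\star$.

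The main obstacle I anticipate lies in the uniform-convergence step: unlike $G_\mu$ in Theorem~\ref{thm:conv_prox_recursion_wasserstein}, the functional $H_\mu$ depends on $\mu$ not only through the Wasserstein term but also through the linear surrogate, so passing to the limit along $\mu_{k_\ell} \to \overline{\mu}$ requires continuity of the map $\mu \mapsto \xi_\mu$ in a mode strong enough to transfer $W_2$-convergence into uniform convergence of $H_{\mu_{k_\ell}}$ over $\mathcal{S}(\mu_0)$. This should follow from the regularity hypotheses in Assumption~\ref{ass:regularity} together with the uniform Lipschitz bound on $\xi_\mu$, but it is the one place where the argument departs substantively from the proximal-point analysis.
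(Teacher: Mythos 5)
Your proposal is correct, and the crucial step---the per-iteration descent inequality on $F$---is obtained by a genuinely different route than the paper's. The paper first derives the Euler--Lagrange characterization of the minimizer, $T_{\mu_{k+1}\rightarrow\mu_k} = \id + \tau\,\xi_k$, then combines the $l$-smoothness inequality of Lemma~\ref{lemma:l_smooth_functional} (with $\mu_{k+1}$ as reference measure), the $\lambda$-Lipschitz bound on $\xi_k$, and the first-order convexity inequality of Lemma~\ref{lemma:first_order_convexity} at $\mu_{k+1}$ to arrive at $F(\mu_k) \geq F(\mu_{k+1}) + (\tfrac{1}{\tau}-\alpha)\,W_2^2(\mu_k,\mu_{k+1})$. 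You instead transplant the standard Euclidean proximal-gradient descent lemma: compare the minimizer against the candidate $\nu=\mu_k$ to bound $\widetilde F(\mu_{k+1})-\widetilde F(\mu_k) \leq -\tfrac{1}{2\tau}W_2^2$, then sandwich $\int\langle\xi_{\mu_k},T-\id\rangle\,d\mu_k$ between the $\lambda$-smooth surrogate and the $l$-smooth $F$, both with $\mu_k$ as reference measure. This avoids the critical-point characterization and the convexity lemma entirely in this step (convexity enters only at the very end, to identify the fixed point with $\mu^\star$), at the cost of a constant that is half the paper's ($\tfrac{1}{2\tau}-\tfrac{\alpha}{2}$ versus $\tfrac{1}{\tau}-\alpha$)---immaterial for convergence, and both yield the same threshold $\tau<1/\alpha$. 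The remainder of your argument (sublevel-set compactness, subsequence extraction, fixed-point identification, strict convexity) coincides with the paper's, which itself defers to the proof of Theorem~\ref{thm:conv_prox_recursion_wasserstein}; the uniform-convergence subtlety you flag regarding the $\mu$-dependence of the linear surrogate is real and is glossed over in the paper as well, but is resolved exactly as you suggest via the Lipschitz regularity of $\mu\mapsto\xi_\mu$ granted by Assumption~\ref{ass:regularity}.
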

\begin{proof}
  From the $l$-smoothness of~$F$ and Lemma~\ref{lemma:l_smooth_functional} (with $\mu_{k+1}$
  as the reference measure), we get:
\begin{align*}
  &\int_{\Omega} \left \langle \xi_k(T_{\mu_{k+1}
      \rightarrow \mu_k}) - \xi_{k+1} , T_{\mu_{k+1}
      \rightarrow \mu_k} - \id \right \rangle d\mu_{k+1}  \\
      & \leq l W_2^2(\mu_k, \mu_{k+1}).
\end{align*}
Since $\mathbb{E}_{\nu} \left[ \left. \frac{\delta F}{\delta \mu} \right|_{\mu_k} \right]$ 
is linear in~$\nu$ for a given~$\mu_k$, and the Fr{\'e}chet derivative of~$F$
is Lipschitz continuous at any atomless probability measure (with Lipschitz constant~$\lambda$), 
it follows from Lemma~\ref{lemma:strongly_convex_PGD} that the 
objective functional in~\eqref{eq:equiv_descent_wasserstein} is strongly convex
w.r.t. reference measure~$\mu_k$
(note that $\tau < 1/\alpha = 1/(l + \lambda) < 1/\lambda$),
and therefore has a unique minimizer.
Following similar steps as in the proof of
  Theorem~\ref{thm:conv_prox_recursion_wasserstein} to characterize
the critical point of~\eqref{eq:equiv_descent_wasserstein}, we get
that $T_{\mu_{k+1} \rightarrow \mu_k} = \id + \tau \xi_k$, and by
substitution in the above, we obtain:
\begin{align*}
 & \int_{\Omega} \left \langle \xi_{k+1} , T_{\mu_{k+1}
      \rightarrow \mu_k} - \id \right \rangle d\mu_{k+1} \\
 & \geq  \int_{\Omega} \left \langle \xi_k(T_{\mu_{k+1}
      \rightarrow \mu_k}),  T_{\mu_{k+1} \rightarrow \mu_k} - \id \right \rangle d\mu_{k+1}  \\
  &\qquad \qquad - l W_2^2(\mu_k, \mu_{k+1}) \\
 &= \int_{\Omega} \left \langle \xi_k(T_{\mu_{k+1}
      \rightarrow \mu_k}) - \xi_k ,  T_{\mu_{k+1} \rightarrow \mu_k} - \id \right \rangle d\mu_{k+1}  \\
  &\qquad + \int_{\Omega} \left \langle \xi_k ,  T_{\mu_{k+1} \rightarrow \mu_k} - \id \right \rangle d\mu_{k+1}  \\
&\qquad \qquad - l W_2^2(\mu_k, \mu_{k+1}) \\
 &\geq - \lambda  W_2^2(\mu_k, \mu_{k+1}) + \frac{1}{\tau} W_2^2(\mu_k, \mu_{k+1}) \\
 &\qquad \qquad - l W_2^2(\mu_k, \mu_{k+1}) \\
&= \left( \frac{1}{\tau} - \alpha \right) W_2^2(\mu_k, \mu_{k+1}).
\end{align*}
Moreover, from the convexity of $F$ and
Lemma~\ref{lemma:first_order_convexity} 
(with $\mu_{k+1}$ as the reference measure) 
we have: 
\begin{align*}
  F(\mu_k) \geq F(\mu_{k+1}) + \int_{\Omega} \left \langle \xi_{k+1} , T_{\mu_{k+1} \rightarrow \mu_k} - \id \right \rangle
  d\mu_{k+1} .
\end{align*}
Substituting in the latest inequality, we obtain:
\begin{align*}
  F(\mu_k) \geq F(\mu_{k+1}) + \left( \frac{1}{\tau} - \alpha \right)
  W_2^2(\mu_k, \mu_{k+1}).
\end{align*}
From this inequality, we deduce that $\mu_{k+1}$ belongs to the
$F$-sublevel set of $\mu_k$, and consequently that the sequence
$\lbrace \mu_k \rbrace_{k \in \mathbb{N}}$ is contained in
$\mathcal{S}(\mu_0)$, the $F$-sublevel set of $\mu_0$. From here,
following similar steps as in the proof of
Theorem~\ref{thm:conv_prox_recursion_wasserstein}, we conclude that
the sequence $\lbrace \mu_k \rbrace_{k \in \mathbb{N}}$ is convergent
and $\lim_{K \rightarrow \infty} W_2^2(\mu_K, \bar{\mu}) = 0$ for some
$\bar{\mu} \in \mathcal{S}(\mu_0)$.
As the sequence $\lbrace \mu_k \rbrace_{k \in \mathbb{N}}$ is
generated by \eqref{eq:equiv_descent_wasserstein}, the limit
$\bar{\mu}$ must be one of its fixed points, again following
similar reasoning as in Theorem~\ref{thm:conv_prox_recursion_wasserstein}.  
Since $F$ is strictly convex, we get that the only fixed point
of \eqref{eq:equiv_descent_wasserstein} is $\mu^\star$. We therefore have
$\bar{\mu} = \mu^\star$.
 \oprocend
\end{proof}
Now Theorem~\ref{thm:target_dynamics_euclidean} allows us to consider
the transport in $\mathcal{P}(\Omega)$ given by the following proximal
scheme in $\Omega$:
\begin{align}
	x^{+} = \arg \min_{z \in \Omega} \frac{1}{2\tau} | x -z |^2 + f(z),
	\label{eq:target_dyn_euclidean}
\end{align}
where $x \sim \mu$ and $f = \left. \frac{\delta F}{\delta \nu}
\right|_{\mu}$. This scheme is convergent according to
Theorem~\ref{thm:proximal_scheme_convergence_target}.
%
%

\section{Multi-agent proximal descent
  algorithms} \label{sec:multi_agent_coverage_control} In this
section, we bring the sample-based, proximal descent schemes of the
previous section to a form that is closer to the more familiar
multi-agent cooperative control algorithms. We achieve this by a
direct discretization of the functional.  By doing so, we are able to
retain some convergence properties of the algorithms, as shown in this
section.  We then show that, in the limit of space and time
discretizations, the corresponding algorithm recovers the lost
properties.

We start by describing the multi-agent system by an appropriate
probability distribution. Recall that the configuration of the
collective is given by $\mathbf{x} = (x_1, \ldots, x_N)$, with $x_i
\in \Omega$ for $i \in \lbrace 1, \ldots, N \rbrace$.  Let
$\widehat{\mu}^{N}_{\mathbf{x}} = \frac{1}{N} \sum_{i=1}^N
\delta_{x_i}$ be the discrete measure in $\mathcal{P}(\Omega)$
corresponding to the configuration~$\mathbf{x}$, where
$\delta_{x_i}$ is the Dirac measure supported at~$x_i$.
For a macroscopic description of the transport, we first let the
macroscopic configuration be specified by an absolutely continuous
probability measure, and since $\widehat{\mu}^N_{\mathbf{x}}$ is is
not absolutely continuous, we consider an alternative absolutely
continuous probability measure $\widehat{\mu}^{h,N}_{\mathbf{x}}$
through its density function using a smooth kernel, as follows:
\begin{align}
  \widehat{\mu}^{h,N}_{\mathbf{x}}(x) = \frac{1}{N} \sum_{i=1}^N
  K_h(x-x_i),
  \label{eq:density_kernel}
\end{align}
where~$h>0$ is the bandwidth of the kernel. With a slight abuse of
notation, we allow $\widehat{\mu}^{h,N}_{\mathbf{x}}$ to denote both
the absolutely continuous measure and its corresponding density
function. We also denote, for $x \in \Omega$,
$\widehat{\mu}^{h,1}_{x}$ simply by $\widehat{\mu}^{h}_{x}$. Thus, we
have $\widehat{\mu}^{h,N}_{\mathbf{x}} = \frac{1}{N} \sum_{i=1}^N
\widehat{\mu}^h_{x_i}$, for $\mathbf{x} \in \Omega^N$.
\begin{assumption}[\bf \emph{Properties of kernel and kernel-based
    measures}]
\label{ass:kernel_properties}
For $h>0$ and $z \in \Omega$ and a kernel-based probability
measure $\widehat{\mu}^h_z$ defined as
in~\eqref{eq:density_kernel} for $N=1$, the following hold:\\
\textbf{(i) Smoothness:} The kernel $K_h$ is smooth,
$K_h \in
C^{\infty}(\Omega)$, for every $h > 0$. \\
\textbf{(ii) Monotonicity of support:} For any $z \in \Omega$ and $h_1
< h_2$, we let $\supp \left( \widehat{\mu}^{h_1}_{z} \right) \subset
\supp \left( \widehat{\mu}^{h_2}_{z} \right)$. \\
\textbf{(iii) Containment:} For every $h>0$, there exists a set
$\tilde{\Omega}_h \subset \Omega$ (relatively) open, such that for $z
\in \tilde{\Omega}_h$, the support of the measure
$\widehat{\mu}^{h}_{z}$ satisfies $\supp(\widehat{\mu}^{h}_{z})
\subset \Omega$.  Moreover, $\lim_{h \rightarrow 0} \tilde{\Omega}_h =
\Omega$ in Hausdorff
distance. \\
\textbf{(iv) Total variation convergence:} Let $\mathcal{M}$ be the
space of all measureable functions over $\Omega$. It holds that
$\lim_{h \rightarrow 0} \sup_{f \in \mathcal{M}} \left \lbrace
  \int_{\Omega} f(z) K_h(x-z) \dvol(z) - f(x) \right \rbrace = 0$,
that is, the kernel-based measure converges uniformly to the Dirac
measure as $h \rightarrow 0$.
\end{assumption}
An example kernel for~\eqref{eq:density_kernel} that satisfies
Assumption~\ref{ass:kernel_properties} is the truncated Gaussian
kernel restricted to an open ball $B_{h}(x_i)$ of radius~$h$ centered
at~$x_i$, given by $K_h(x - x_i) = \frac{1}{C}\text{exp}
\left(\frac{- \left|x - x_i \right|^2}{2h^2} \right) \mathbf{1}_{B_{h}(x_i)}(x)$,
where $C = \int_{B_{h}(x_i)} \text{exp} \left(\frac{- \left|x -
    x_i \right|^2}{2h^2} \right) \dvol(x)$ is the normalizing constant.
\subsection{Discretization of functional~$F$ and its properties}
We define an aggregate objective function $F^{h,N}$ for the
multi-agent system as the discretization of the functional $F$, for
$h>0$, as follows:
\begin{align}
  F^{h,N}(\mathbf{x}) = F(\widehat{\mu}^{h,N}_{\mathbf{x}}),
  \label{eq:F^h,N_definition}
\end{align} 
and, subsequently, analyze its properties. 
First note that $F^{h,N}$ is invariant under permutations, that is,
for~$\mathbf{x} \in \tilde{\Omega}^N_h$ and~$P \in \real^{N \times N}$
a permutation, we have~$F^{h,N}(\mathbf{x}) = F^{h,N}((P \otimes
I_d)~\mathbf{x})$.
The following lemma establishes the almost sure convergence of the
$F^{h,N}$ to~$F$ as $h \rightarrow 0, N \rightarrow \infty$:
\begin{lemma}[\bf \emph{Convergence as $h \rightarrow 0,~N \rightarrow \infty$}] \label{lemma:conv_h,N}
    Let Assumption~\ref{ass:kernel_properties} and the Fr{\'e}chet
    differentiability of the functional~$F$ hold, and let
    $x_{i} \sim \mu$ for $i \in \until{N}$, independent and identically distributed.
    Then, we have $\lim_{h \rightarrow 0} \lim_{N \rightarrow \infty}
    F^{h,N}(x_1, \ldots, x_N) = F(\mu)$, $\mu$-almost surely.
\end{lemma}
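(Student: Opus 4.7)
The plan is to decompose the iterated limit into its two constituents and treat them sequentially. For fixed~$h > 0$, I would first establish, $\mu$-almost surely, the inner limit $\lim_{N \to \infty} F^{h,N}(x_1, \ldots, x_N) = F(K_h * \mu)$, where $K_h * \mu$ is the absolutely continuous probability measure with density $(K_h * \mu)(x) = \int_{\Omega} K_h(x-z) \, d\mu(z)$. Then I would pass to the outer limit $h \to 0$ along a countable sequence, invoking Assumption~\ref{ass:kernel_properties}(iv) together with continuity of $F$ to conclude that $F(K_h * \mu) \to F(\mu)$.

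For the inner limit, note that for every fixed $x \in \Omega$, the random variables $\{K_h(x - x_i)\}_{i \in \naturals}$ are iid with mean $(K_h * \mu)(x)$, so the strong law of large numbers gives $\widehat{\mu}^{h,N}_{\mathbf{x}}(x) = \frac{1}{N}\sum_{i=1}^N K_h(x - x_i) \to (K_h * \mu)(x)$ $\mu$-a.s. Because $K_h \in C^{\infty}(\Omega)$ by Assumption~\ref{ass:kernel_properties}(i), the family $\{K_h(\cdot - z) : z \in \Omega\}$ is uniformly bounded and equicontinuous on the compact set $\Omega$, which allows the pointwise SLLN to be upgraded to uniform convergence on $\Omega$ by a standard Glivenko--Cantelli/Arzelà--Ascoli argument. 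Uniform convergence of the densities yields convergence of $\widehat{\mu}^{h,N}_{\mathbf{x}}$ to $K_h * \mu$ in total variation, and in particular weakly, i.e.\ in $(\mathcal{P}(\Omega), W_2)$. Fréchet differentiability of $F$ implies its continuity on $\mathcal{P}(\Omega)$ with respect to $W_2$, so $F(\widehat{\mu}^{h,N}_{\mathbf{x}}) \to F(K_h * \mu)$ $\mu$-a.s.

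For the outer limit, Assumption~\ref{ass:kernel_properties}(iv) asserts that $K_h * \mu \to \mu$ uniformly over measurable test functions as $h \to 0$, hence weakly. Continuity of $F$ then gives $F(K_h * \mu) \to F(\mu)$. To combine the two steps into a single $\mu$-a.s. statement, I would choose a countable sequence $h_n \downarrow 0$ and take the intersection of the probability-one events on which the inner limit holds for each $h_n$; this intersection has full $\mu$-measure, and on it the iterated limit evaluates to $F(\mu)$.

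The step I expect to require the most care is the promotion of the pointwise SLLN to uniform convergence of $\widehat{\mu}^{h,N}_{\mathbf{x}}$ to $K_h * \mu$, since only then does the continuity of $F$ with respect to the Wasserstein topology directly deliver $F(\widehat{\mu}^{h,N}_{\mathbf{x}}) \to F(K_h * \mu)$. For smooth, bounded, compactly supported kernels this is fairly standard, but it must be invoked explicitly rather than being immediate from the plain SLLN.
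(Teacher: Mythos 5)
Your proof is correct, and it reaches the same conclusion through the same two essential ingredients the paper uses: convergence of the kernel-smoothed empirical measure to $\mu$, and continuity of $F$ in the weak topology (inferred from Fr\'echet differentiability). The difference is in how the double limit is organized. The paper treats $h \rightarrow 0$ and $N \rightarrow \infty$ as a single joint limit, citing the Glivenko--Cantelli theorem together with Assumption~3(iv) in one step to get $\widehat{\mu}^{h,N}_{\mathbf{x}} \rightarrow \mu$ uniformly almost surely, and then applies continuity of $F$ once. You instead respect the iterated-limit structure of the statement: for fixed $h$ you identify the intermediate object $K_h * \mu$, prove $\widehat{\mu}^{h,N}_{\mathbf{x}} \rightarrow K_h * \mu$ in total variation via the strong law of large numbers upgraded to uniform convergence by equicontinuity of the smooth kernel family, and only then send $h \rightarrow 0$ using Assumption~3(iv). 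Your version is more elementary (it does not need a function-class Glivenko--Cantelli statement, only the scalar SLLN plus Arzel\`a--Ascoli) and is more careful on the measure-theoretic side, e.g.\ the countable intersection of full-measure events over a sequence $h_n \downarrow 0$, a point the paper's joint-limit formulation glosses over. The paper's version is shorter and gives the (formally stronger) joint convergence, but both arguments are sound and interchangeable here.
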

The following lemma relates the derivative of the function $F^{h,N}$
to the Fr\'echet derivative of the functional $F$:
\begin{lemma}[\bf \emph{Derivative of $F^{h,N}$}] 
  \label{lemma:der_F^h,N_vs_F}
  Let Assumption~\ref{ass:kernel_properties} and the Fr{\'e}chet
  differentiability of the functional~$F$ hold, and let $h>0$ with set
  $\tilde{\Omega}_h$ as in
  Assumption~\ref{ass:kernel_properties}-(iii).  For $\mathbf{x} = (z,
  \eta) \in \tilde{\Omega}_h\times \tilde{\Omega}_h^{N-1}$, we have
  that the derivative of the function $F^{h,N}$ satisfies:
\begin{align*}
  \partial_1 F^{h,N}(z, \eta) = \frac{1}{N} \int_{\supp \left(
      \widehat{\mu}^h_{z} \right)} \nabla
  \varphi^{h,N}_{\mathbf{x}}~d\widehat{\mu}^h_{z},
\end{align*}
where $d\widehat{\mu}^h_{z} = \rho^{h}_z~\dvol$ with $\rho^{h}_z (x) =
K(x-z,h)$, $\varphi^{h,N}_{\mathbf{x}} = \frac{\delta F}{\delta \nu}
\left. \right|_{\widehat{\mu}^{h,N}_{ \mathbf{x}}}$ and $\partial_1$
denotes the derivative w.r.t the first argument.
\end{lemma}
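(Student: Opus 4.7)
The plan is to apply the chain rule to the composition $F^{h,N}(\mathbf{x}) = F\bigl(\widehat{\mu}^{h,N}_{\mathbf{x}}\bigr)$, exploit the Fréchet differentiability of $F$ on the Wasserstein space, and convert a gradient acting on the kernel into a gradient acting on the Fréchet derivative via integration by parts.

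First I would write
\begin{align*}
 \widehat{\mu}^{h,N}_{\mathbf{x}}(x) \;=\; \tfrac{1}{N} K_h(x-z) + \tfrac{1}{N} \sum_{i=2}^{N} K_h(x-\eta_i),
\end{align*}
so that differentiation with respect to the first argument $z$ affects only the term $K_h(x-z)$. By Fréchet differentiability of $F$, a first-order perturbation $\delta \widehat{\mu}$ of the density induces the linear response $\int_{\Omega} \varphi^{h,N}_{\mathbf{x}}(x)\, \delta\widehat{\mu}(x)\, \dvol(x)$, with $\varphi^{h,N}_{\mathbf{x}} = \bigl. \delta F/\delta\nu \bigr|_{\widehat{\mu}^{h,N}_{\mathbf{x}}}$. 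Combining these with the identity $\nabla_z K_h(x-z) = -\nabla K_h(x-z)$ (where $\nabla$ denotes the gradient with respect to the argument of $K_h$), the chain rule yields
\begin{align*}
 \partial_1 F^{h,N}(z,\eta) \;=\; -\tfrac{1}{N} \int_{\Omega} \varphi^{h,N}_{\mathbf{x}}(x)\, \nabla K_h(x-z)\, \dvol(x).
\end{align*}

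Next I would integrate by parts on $\supp(\widehat{\mu}^h_z)$. Since $z \in \tilde{\Omega}_h$, Assumption~\ref{ass:kernel_properties}-(iii) guarantees $\supp(\widehat{\mu}^h_z) \subset \Omega$, so the boundary of the support lies strictly inside $\Omega$ where $\varphi^{h,N}_{\mathbf{x}}$ is well-defined. Smoothness of $K_h$ together with compactness of its support implies that $K_h$ vanishes on $\partial(\supp(\widehat{\mu}^h_z))$, making the boundary term vanish. This gives
\begin{align*}
 \partial_1 F^{h,N}(z,\eta) \;=\; \tfrac{1}{N} \int_{\supp(\widehat{\mu}^h_z)} K_h(x-z)\, \nabla \varphi^{h,N}_{\mathbf{x}}(x)\, \dvol(x),
\end{align*}
which upon recalling $d\widehat{\mu}^h_z = K_h(x-z)\, \dvol(x) = \rho^h_z\, \dvol$ is precisely the desired formula.

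The main obstacle I anticipate is justifying the integration by parts cleanly: one must argue that the smooth, compactly supported kernel $K_h$ vanishes on the boundary of its support together with the fact that $\varphi^{h,N}_{\mathbf{x}}$ is sufficiently regular on a neighborhood of this support. The role of the set $\tilde{\Omega}_h$ is critical here, since for $z$ near $\partial \Omega$ the support of $\widehat{\mu}^h_z$ could intersect $\partial \Omega$ and invalidate the vanishing boundary contribution; restricting $z \in \tilde{\Omega}_h$ is precisely what circumvents this, and this is the condition the statement of the lemma invokes.
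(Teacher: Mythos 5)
Your proof is correct, but it takes a genuinely different route from the paper's. The paper never differentiates the kernel: it considers a curve $\mathbf{x}(t)$ with $\dot{\mathbf{x}}(0)=\mathbf{v}$, observes that moving $x_i$ with velocity $\mathbf{v}_i$ translates the component measure $\widehat{\mu}^h_{x_i}$ rigidly, so the induced tangent velocity field of $\widehat{\mu}^{h,N}_{\mathbf{x}(t)}$ equals the constant $\mathbf{v}_i$ on $\supp(\widehat{\mu}^h_{x_i})$, and then plugs this directly into the Wasserstein directional-derivative formula $D_{\mathbf{v}}F(\mu)=\int_\Omega\langle\nabla\varphi,\mathbf{v}\rangle\,d\mu$; identifying coefficients of the arbitrary $\mathbf{v}_i$ gives the claim. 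In that route the passage from $\varphi$ to $\nabla\varphi$ is already built into the definition of the Fr\'echet derivative ($\xi=\nabla\varphi$), so no integration by parts is ever performed. You instead work with the flat $L^2$ first variation, differentiate the density $\frac{1}{N}K_h(\cdot-z)$ in $z$, and recover $\nabla\varphi$ by an explicit integration by parts over $\supp(\widehat{\mu}^h_z)$. The two computations are equivalent (your integration by parts is exactly the adjoint relation between $\varphi\mapsto\int\varphi\,\delta\mu$ for $\delta\mu=-\nabla\cdot(\widehat{\mu}^h_z\mathbf{v})$ and $\int\langle\nabla\varphi,\mathbf{v}\rangle d\widehat{\mu}^h_z$), and your use of the containment $\supp(\widehat{\mu}^h_z)\subset\Omega$ for $z\in\tilde{\Omega}_h$ correctly identifies where that hypothesis enters. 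The one hypothesis your route leans on that the paper's does not is that $K_h$ vanish on the boundary of its support; this does follow from the continuity of $K_h$ required by Assumption~3-(i), but note it fails for the paper's own illustrative truncated-Gaussian kernel, which is discontinuous at the edge of $B_h(x_i)$ --- the paper's velocity-field argument is insensitive to this, whereas yours would pick up a nonzero boundary term for that particular kernel.
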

From the invariance of $F^{h,N}$ under permutations, 
the expression in Lemma~\ref{lemma:der_F^h,N_vs_F} 
holds for the partial derivative of $F^{h,N}$ w.r.t 
every component of $\mathbf{x}$.
%
%
In what follows, we will make the following assumption characterize the
behavior of the discretization $F^{h,N}$ along the boundary through
the following assumption:
\begin{assumption}[\bf \emph{Boundary conditions}]
\label{ass:F^h,N_boundary_condns}
The function $F^{h,N}$ is Fr{\'e}chet differentiable
and its derivative satisfies the boundary 
condition $\partial_1 F^{h,N}(z, \xi) \cdot \mathbf{n}(z) =
0$ for $z \in \partial \tilde{\Omega}_h$ and all $\xi \in
\tilde{\Omega}^{N-1}_h$.
\end{assumption}
In general, note that $F^{h,N} : \Omega^N \rightarrow \real$ is
nonconvex in spite of being the discretization of a strictly
geodesically convex functional $F : \mathcal{P}(\Omega) \rightarrow
\real$.  This is because the notion of convexity of functions over
$\Omega^N$, which is the domain of the function $F^{h,N}$, is not
implied by the notion of geodesic convexity over the space of
probability measures over $\Omega$. In this way, for $\mathbf{x},
\mathbf{y} \in \Omega^N$ with $\sum_{i=1}^N \frac{1}{N} \delta_{x_i},
\sum_{i=1}^N \frac{1}{N} \delta_{y_i} \in \mathcal{P}(\Omega)$ being
the corresponding discrete measures, the supports of the geodesics
(when they exist) between $\sum_{i=1}^N \frac{1}{N} \delta_{x_i}$ and
$\sum_{i=1}^N \frac{1}{N} \delta_{y_i}$ in $\mathcal{P}(\Omega)$ do
not necessarily correspond to the straight line segment between
$\mathbf{x}$ and $\mathbf{y}$ in $\Omega^N$. In what follows, we
identify a condition that can guarantee convexity of the discretized
functional.  We note that this condition is employed later to prove
the convergence of the discrete algorithms to local minimizers.
\begin{definition}[\bf \emph{Cyclical monotonicity}]
  A set $\Gamma \subset \Omega \times \Omega$ is cyclically monotone
  if any sequence $\lbrace (x_i, y_i) \rbrace_{i=1}^N$, with $(x_i,
  y_i) \in \Gamma$, satisfies:
\begin{align*}
	\sum_{i=1}^N |x_i - y_i|^2 \leq \sum_{i=1}^N |x_i - y_{\sigma(i)}|^2,
\end{align*}
where $\sigma$ is any permutation.
\end{definition}
\noindent
We note that the notion of cyclical monotonicity is a geometric property (Chapter~5, \cite{CV:08})
that indicates that the assignment $\lbrace (x_i, y_i) \rbrace_{i=1}^N$ (as specified by the pairings)
of points $\lbrace x_i \rbrace_{i=1}^N$ to $\lbrace y_i \rbrace_{i=1}^N$ 
is optimal w.r.t. the transport cost.
Now for $\delta > 0$, we define a subset $\Delta_{\delta}
\subset
\Omega^N$ as follows:
\begin{align*}
  \Delta_{\delta} = \left \lbrace \left. \mathbf{z} = (z_1, \ldots, z_N) \in
      \mathring{\Omega}^N \right| |z_i - z_j| > \delta ,~ \forall ~ i
    \neq j \right \rbrace.
\end{align*}
For every $\mathbf{x} \in \Delta_{\delta}$, we now define a set
$\Gamma_{\mathbf{x}} \subset \Omega^N$ such that for all $\mathbf{y}
\in \Gamma_{\mathbf{x}}$, we have:
\begin{align*}
	\sum_{i=1}^N |x_i - y_i|^2 \leq \sum_{i=1}^N |x_i - y_{\sigma(i)}|^2,
\end{align*}
for any permutation $\sigma$. In other words, $\Gamma_{\mathbf{x}}$ is
the subset of $\Omega^N$ such that for any $\mathbf{y} \in
\Gamma_{\mathbf{x}}$,  $\lbrace (x_i, y_i) \rbrace_{i=1}^N$ is cyclically monotone.
We now establish through the following lemma that the set
$\Gamma_{\mathbf{x}}$ contains an open neighborhood of $\mathbf{x}$:
\begin{lemma}[\bf \emph{$\Gamma_{\mathbf{x}}$ contains an open
    neighborhood of $\mathbf{x}$}]
\label{lemma:neighborhood_cyclically_monotone}
For any $\delta > 0$ and $\mathbf{x} \in \Delta_{\delta}$, there
exists an open neighborhood $\mathcal{N}(\mathbf{x}) \subset \Omega^N$
of $\mathbf{x}$ such that $\mathcal{N}(\mathbf{x}) \subset
\Gamma_{\mathbf{x}}$.
\end{lemma}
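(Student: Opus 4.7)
The plan is to exploit the fact that for any non-identity permutation $\sigma$, the sum $\sum_i |x_i - x_{\sigma(i)}|^2$ evaluated at $\mathbf{y} = \mathbf{x}$ is strictly positive, bounded below in terms of $\delta$, while $\sum_i |x_i - x_i|^2 = 0$. A simple continuity argument then gives a neighborhood in which the identity permutation strictly dominates every other permutation, and the fact that the symmetric group on $N$ letters is finite lets us take a finite intersection.

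More concretely, I would first observe that for any permutation $\sigma \neq \id$, there exists at least one index $i$ with $\sigma(i) \neq i$; since $\mathbf{x} \in \Delta_\delta$ implies $|x_i - x_j| > \delta$ for $i \neq j$, we get the lower bound
\begin{align*}
\sum_{i=1}^N |x_i - x_{\sigma(i)}|^2 \;\geq\; \delta^2 \cdot \#\{i : \sigma(i) \neq i\} \;\geq\; 2\delta^2,
\end{align*}
where the factor $2$ comes from the fact that a non-identity permutation moves at least two indices. In particular, the strict inequality $\sum_i |x_i - x_i|^2 < \sum_i |x_i - x_{\sigma(i)}|^2$ holds at $\mathbf{y} = \mathbf{x}$ for every $\sigma \neq \id$.

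Next, for each fixed $\sigma \neq \id$, define the continuous function $g_\sigma(\mathbf{y}) = \sum_i |x_i - y_{\sigma(i)}|^2 - \sum_i |x_i - y_i|^2$ on $\Omega^N$. From the previous step, $g_\sigma(\mathbf{x}) \geq 2\delta^2 > 0$. By continuity, there is an open neighborhood $\mathcal{N}_\sigma(\mathbf{x}) \subseteq \Omega^N$ on which $g_\sigma(\mathbf{y}) \geq 0$ (in fact strictly positive, but we only need the inequality defining cyclical monotonicity). Since $\mathbf{x} \in \mathring{\Omega}^N$, we may shrink $\mathcal{N}_\sigma(\mathbf{x})$ so that it is contained in $\mathring{\Omega}^N$.

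Finally, take $\mathcal{N}(\mathbf{x}) = \bigcap_{\sigma \neq \id} \mathcal{N}_\sigma(\mathbf{x})$, which is a finite intersection of open neighborhoods of $\mathbf{x}$ (since there are only $N! - 1$ non-identity permutations) and therefore open and nonempty, containing $\mathbf{x}$. For any $\mathbf{y} \in \mathcal{N}(\mathbf{x})$ and any permutation $\sigma$, the defining inequality of cyclical monotonicity holds, so $\mathbf{y} \in \Gamma_{\mathbf{x}}$. There is no real obstacle here; the only subtlety is remembering to use the pairwise separation $|x_i - x_j| > \delta$ to obtain a strict gap at $\mathbf{y} = \mathbf{x}$, which is what allows the continuity argument to produce an open (rather than merely non-trivial) neighborhood.
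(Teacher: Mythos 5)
Your proof is correct and follows essentially the same idea as the paper's: the pairwise separation $|x_i - x_j| > \delta$ forces a strict gap between the identity assignment and every non-identity permutation at $\mathbf{y}=\mathbf{x}$, and this gap persists in a neighborhood. The only difference is cosmetic --- the paper exhibits the neighborhood explicitly as the product of open $\delta/2$-balls $\prod_{i=1}^N B_{\delta/2}(x_i)$ and verifies the defining inequality directly via the triangle inequality, whereas you obtain an (unquantified) neighborhood by continuity of the finitely many gap functions $g_\sigma$.
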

From Lemma~\ref{lemma:neighborhood_cyclically_monotone},
  we get that for $\mathbf{x} \in \Delta_{\delta}$ with a given
$\delta > 0$, there is a $\bar{h}_\delta$ such that for all $0<h
<\bar{h}_\delta$, the supports of the components
$\widehat{\mu}^{h}_{x_i}$ of the measure
$\widehat{\mu}^{h,N}_{\mathbf{x}}$ can be made disjoint.
\begin{lemma}[\bf \emph{Relaxation to atomless measures}]
\label{lemma:relaxation_atomless_measures}
For any $\delta > 0$ and $\mathbf{x} \in \Delta_{\delta}$ and $\mathbf{y} \in
\Gamma_{\mathbf{x}}$, there is $\bar{h}_{\delta} > 0$ such that for $0
\leq h \leq \bar{h}_{\delta}$ and the measures
$\widehat{\mu}^{h,N}_{\mathbf{x}}, \widehat{\mu}^{h,N}_{\mathbf{y}}$
defined in \eqref{eq:density_kernel}, the optimal transport map
$T_{\widehat{\mu}^{h,N}_{\mathbf{x}} \rightarrow
  \widehat{\mu}^{h,N}_{\mathbf{y}}}$ from
$\widehat{\mu}^{h,N}_{\mathbf{x}}$ to
$\widehat{\mu}^{h,N}_{\mathbf{y}}$ satisfies:
\begin{align*}
  \left( T_{\widehat{\mu}^{h,N}_{\mathbf{x}} \rightarrow
      \widehat{\mu}^{h,N}_{\mathbf{y}}} - \id \right) (z) = y_i - x_i,
  ~~~ \forall\; z \in \supp\left( \widehat{\mu}^{h}_{x_i} \right).
\end{align*}
\end{lemma}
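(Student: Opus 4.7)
The plan is to produce the piecewise-translation map as an explicit candidate and verify that it is the unique Brenier optimal transport map by checking cyclical monotonicity of its graph.

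First, since $\mathbf{x} \in \Delta_\delta$ gives $|x_i - x_j| > \delta$ for $i \neq j$, Assumption~\ref{ass:kernel_properties}(ii)--(iii) yields a $\bar h_\delta > 0$ such that for every $h \in (0, \bar h_\delta]$ the sets $\supp(\widehat{\mu}^h_{x_i})$ are pairwise disjoint and contained in $\Omega$. Because $K_h(\cdot - x_i)$ is a translated copy of a fixed profile, one has $\supp(\widehat{\mu}^h_{y_i}) = \supp(\widehat{\mu}^h_{x_i}) + (y_i - x_i)$, so defining
\begin{align*}
  T(z) = z + (y_i - x_i) \quad \text{for } z \in \supp(\widehat{\mu}^h_{x_i}),\; i \in \until{N},
\end{align*}
gives a well-defined measurable map on $\supp(\widehat{\mu}^{h,N}_{\mathbf{x}})$. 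Translation invariance of Lebesgue measure yields $T_\# \widehat{\mu}^h_{x_i} = \widehat{\mu}^h_{y_i}$, hence $T_\# \widehat{\mu}^{h,N}_{\mathbf{x}} = \widehat{\mu}^{h,N}_{\mathbf{y}}$, so $T$ is a valid transport map.

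Next, to identify $T$ with the Brenier optimal map I would verify cyclical monotonicity of its graph. Given any finite family $(z_k, T(z_k))_{k=1}^M$ from the graph, write $z_k = x_{i_k} + \eta_k$ with $|\eta_k|$ bounded by the kernel support radius $r(h) \to 0$ as $h \to 0$. For any permutation $\sigma$, expansion of $\sum_k |z_k - T(z_{\sigma(k)})|^2 - \sum_k |z_k - T(z_k)|^2$ isolates a dominant discrete term $\sum_k \bigl(|x_{i_k} - y_{i_{\sigma(k)}}|^2 - |x_{i_k} - y_{i_k}|^2\bigr)$ plus a remainder $R$ with $|R| = O(r(h))$. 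Non-negativity of the discrete term follows by extending $\sigma$ to a permutation of $\until{N}$ by the identity and invoking $\mathbf{y} \in \Gamma_{\mathbf{x}}$ (the case of repeated indices reduces to the same inequality on the distinct sub-indices). Since $\widehat{\mu}^{h,N}_{\mathbf{x}}$ is absolutely continuous, Brenier's theorem identifies the cyclically monotone $T$ as the (a.e.) unique optimal transport map, giving $(T - \id)(z) = y_i - x_i$ on each $\supp(\widehat{\mu}^h_{x_i})$.

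The main obstacle is the tie case: the discrete term can vanish for nontrivial $\sigma$, so the $O(r(h))$ remainder $R$ could in principle flip the inequality. I would resolve this by invoking Lemma~\ref{lemma:neighborhood_cyclically_monotone} to restrict attention to $\mathbf{y}$ in an open neighborhood of $\mathbf{x}$, where the separation $\delta$ together with the closeness of $\mathbf{y}$ to $\mathbf{x}$ forces strict cyclical monotonicity with a uniform positive gap depending on $\delta$ and $|\mathbf{y} - \mathbf{x}|$; a further shrinking of $\bar h_\delta$ then dominates $R$ by this gap and yields the cyclical monotonicity of the graph of $T$, completing the identification.
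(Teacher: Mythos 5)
Your construction is the same as the paper's: the paper defines the convex potentials $\chi_i(z)=\tfrac12|z+y_i-x_i|^2$, observes that $\nabla\chi_i$ translates $\widehat{\mu}^h_{x_i}$ onto $\widehat{\mu}^h_{y_i}$, asserts in one line that the induced plan has cyclically monotone support, and concludes by McCann's generalization of Brenier's theorem. So the route is identical; the difference is that you actually attempt the cyclical-monotonicity verification that the paper omits.

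That verification is exactly where the difficulty sits, and the obstacle you flag is genuine rather than cosmetic. Because $\Gamma_{\mathbf{x}}$ is defined by non-strict inequalities, ties occur: in one dimension take $x_1=0$, $x_2=1$, $y_1=y_2=1/2$, so that $\mathbf{y}\in\Gamma_{\mathbf{x}}$ with equality under the swap. For $z=h/2\in\supp(\widehat{\mu}^h_{x_1})$ and $z'=x_2$ one gets $\langle z-z',\,T(z)-T(z')\rangle=(h/2-1)(h/2)<0$ for every $h>0$, so the two-point monotonicity condition fails and the piecewise translation is not the optimal map; the conclusion of the lemma itself breaks down at ties, which your $O(r(h))$ remainder analysis correctly anticipates. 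The problem is with your repair: restricting to the neighborhood $\mathcal{N}(\mathbf{x})$ of Lemma~\ref{lemma:neighborhood_cyclically_monotone} proves the claim only for $\mathbf{y}$ within $\delta/2$ of $\mathbf{x}$, whereas the lemma (and its downstream use in Lemmas~\ref{lemma:alpha_smoothness_F^{h,N}} and~\ref{lemma:comparison_cyclically_monotone}) quantifies over all of $\Gamma_{\mathbf{x}}$ with a single $\bar h_\delta$; and even inside that neighborhood the two-point permutation gap equals $2\langle x_i-x_j,\,y_i-y_j\rangle\ge 2|x_i-x_j|\left(|x_i-x_j|-\delta\right)$, which is not bounded below in terms of $\delta$ alone, so the further shrinking of $h$ you invoke would have to depend on $\mathbf{y}$, contradicting the uniformity the statement requires. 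In short, your completed argument yields a strictly weaker statement than the lemma --- though the paper's own proof conceals the same gap behind its unproved assertion of cyclical monotonicity.
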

 We note that
  Lemma~\ref{lemma:relaxation_atomless_measures} is an extension of
  existing results for Dirac measures (Chapter~5, \cite{CV:08}) to
  kernel-based measures.
In this way, Lemma~\ref{lemma:relaxation_atomless_measures}
essentially establishes that for $\mathbf{x} \in \Delta_{\delta}$ and
any $\mathbf{y} \in \Gamma_{\mathbf{x}}$, the optimal transport from
$\widehat{\mu}^{h,N}_{\mathbf{x}}$ to
$\widehat{\mu}^{h,N}_{\mathbf{y}}$ is simply achieved by the
translation of components $\widehat{\mu}^{h}_{x_i}$ along the rays
$y_i - x_i$ to $\widehat{\mu}^{h}_{y_i}$ for each $i \in \lbrace 1,
\ldots, N \rbrace$.
\begin{corollary}[\bf \emph{$L^2$-Wasserstein distance}]
  For any $\delta > 0$ and $\mathbf{x} \in \Delta_{\delta}$ and $\mathbf{y} \in
  \Gamma_{\mathbf{x}}$, there is a $\bar{h}_\delta>0$
    such that for any $0 < h \leq \bar{h}_{\delta}$:
\begin{align*}
  W_2^2 \left( \widehat{\mu}^{h,N}_{\mathbf{x}},
    \widehat{\mu}^{h,N}_{\mathbf{y}} \right) = \frac{1}{N}
  \sum_{i=1}^N | x_i - y_i |^2.
\end{align*}
\end{corollary}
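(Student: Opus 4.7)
The plan is to derive the corollary as a direct consequence of Lemma~\ref{lemma:relaxation_atomless_measures}, using the Monge formulation~\eqref{eq:Monge_Formulation_W2} of the $L^2$-Wasserstein distance. The strategy has three simple steps: identify the relevant optimal transport map from the lemma, split the integral over the (disjoint) supports of the kernel components, and evaluate each piece using the explicit form of $T - \id$.

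First, I would fix $\delta > 0$, $\mathbf{x} \in \Delta_\delta$, $\mathbf{y} \in \Gamma_{\mathbf{x}}$, and let $\bar{h}_\delta$ be as provided by Lemma~\ref{lemma:relaxation_atomless_measures}, so that for any $0 < h \leq \bar{h}_\delta$ the supports $\supp(\widehat{\mu}^h_{x_i})$ are pairwise disjoint (by the choice of $\bar{h}_\delta$ following Lemma~\ref{lemma:neighborhood_cyclically_monotone}) and the optimal transport map $T \equiv T_{\widehat{\mu}^{h,N}_{\mathbf{x}} \rightarrow \widehat{\mu}^{h,N}_{\mathbf{y}}}$ satisfies $(T - \id)(z) = y_i - x_i$ for every $z \in \supp(\widehat{\mu}^h_{x_i})$. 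Since $\widehat{\mu}^{h,N}_{\mathbf{x}}$ is absolutely continuous, the Monge formulation is attained by~$T$, i.e.,
\begin{align*}
  W_2^2\left(\widehat{\mu}^{h,N}_{\mathbf{x}}, \widehat{\mu}^{h,N}_{\mathbf{y}}\right) = \int_\Omega |T(z) - z|^2 \, d\widehat{\mu}^{h,N}_{\mathbf{x}}(z).
\end{align*}

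Next, I would use the decomposition $\widehat{\mu}^{h,N}_{\mathbf{x}} = \tfrac{1}{N} \sum_{i=1}^N \widehat{\mu}^h_{x_i}$ together with the disjointness of the supports to split the integral as a sum of integrals over $\supp(\widehat{\mu}^h_{x_i})$. On each such piece the integrand $|T(z) - z|^2$ is the constant $|y_i - x_i|^2$, so it pulls out of the integral and leaves $\int_{\supp(\widehat{\mu}^h_{x_i})} d\widehat{\mu}^h_{x_i} = 1$. Summing gives exactly $\tfrac{1}{N} \sum_{i=1}^N |x_i - y_i|^2$.

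There is no real obstacle here, since Lemma~\ref{lemma:relaxation_atomless_measures} does all the substantive work (identifying the optimal map and guaranteeing it has the block-translation form). The only minor point to be careful about is to invoke the same threshold $\bar{h}_\delta$ already used to ensure disjoint supports, so that the decomposition of the integral over supports is clean and every component measure has unit mass; this is already built into the statement via $0 < h \leq \bar{h}_\delta$.
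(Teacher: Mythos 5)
Your proof is correct and follows exactly the route the paper intends: the corollary is stated as an immediate consequence of Lemma~\ref{lemma:relaxation_atomless_measures}, obtained by plugging the block-translation form of the optimal map into the Monge formulation and using that each kernel component $\widehat{\mu}^h_{x_i}$ has unit mass. The only cosmetic remark is that the integral splits by linearity of the measure $\widehat{\mu}^{h,N}_{\mathbf{x}} = \frac{1}{N}\sum_i \widehat{\mu}^h_{x_i}$ regardless of disjointness; the disjoint supports are needed only so that the lemma's description of $T-\id$ is unambiguous on each piece.
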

%
With the above results we now establish the following:
\begin{lemma}[\bf \emph{$l/N$-smoothness of $F^{h,N}$}]
  \label{lemma:alpha_smoothness_F^{h,N}} 
  Let $F$ satisfy the regularity conditions of Assumption~\ref{ass:regularity}
  and let Assumption~\ref{ass:kernel_properties} hold.
  For any $\delta > 0$, $\mathbf{x} \in \Delta_{\delta}$, $h \in (0,\bar{h}_{\delta}]$ and
  $\mathbf{y} \in \Gamma_{\mathbf{x}}$:
	\begin{align*}
		\left| \left \langle \nabla F^{h,N}(\mathbf{y}) - \nabla F^{h,N}(\mathbf{x}), \mathbf{y} - \mathbf{x} \right \rangle \right| \leq \frac{l}{N} \| \mathbf{y} - \mathbf{x} \|^2.
	\end{align*}
\end{lemma}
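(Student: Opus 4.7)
The plan is to lift the $l$-smoothness of $F$ from the Wasserstein space (Definition~\ref{defn:l_smooth_functional}, granted by Assumption~\ref{ass:regularity}) down to the Euclidean discretization by using the fact that, under the cyclical monotonicity encoded by $\mathbf{y}\in\Gamma_{\mathbf{x}}$ and for $h\le\bar h_\delta$, the optimal transport between $\widehat{\mu}^{h,N}_{\mathbf{x}}$ and $\widehat{\mu}^{h,N}_{\mathbf{y}}$ is a piecewise translation. More precisely, I apply Definition~\ref{defn:l_smooth_functional} with $\mu=\widehat{\mu}^{h,N}_{\mathbf{x}}$, $\nu=\widehat{\mu}^{h,N}_{\mathbf{y}}$, and base measure $\theta=\mu$ (which is atomless, being absolutely continuous by construction of the smooth kernel). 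Then $T_{\theta\to\mu}=\id$ and $T_{\theta\to\nu}=T_{\mathbf{x}\to\mathbf{y}}$.

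The key reduction is to simplify the inner-product term in the $l$-smoothness inequality. By Lemma~\ref{lemma:relaxation_atomless_measures}, $(T_{\mathbf{x}\to\mathbf{y}}-\id)(z)=y_i-x_i$ on $\supp(\widehat{\mu}^h_{x_i})$, so splitting the integral over the disjoint supports of the kernel components yields
\begin{align*}
\int_{\Omega}\!\left\langle \xi_{\mathbf{x}},\,T_{\mathbf{x}\to\mathbf{y}}-\id\right\rangle d\widehat{\mu}^{h,N}_{\mathbf{x}}
=\frac{1}{N}\sum_{i=1}^{N}\left\langle \int_{\supp(\widehat{\mu}^h_{x_i})}\!\!\!\nabla\varphi^{h,N}_{\mathbf{x}}\,d\widehat{\mu}^h_{x_i},\,y_i-x_i\right\rangle.
\end{align*}
Lemma~\ref{lemma:der_F^h,N_vs_F} identifies the inner integral as $N\,\partial_i F^{h,N}(\mathbf{x})$, so this collapses to $\langle\nabla F^{h,N}(\mathbf{x}),\mathbf{y}-\mathbf{x}\rangle$. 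Using the Corollary on $W_2^2$ to replace $\int|T_{\mathbf{x}\to\mathbf{y}}-\id|^2 d\widehat{\mu}^{h,N}_{\mathbf{x}}$ by $\tfrac{1}{N}\|\mathbf{y}-\mathbf{x}\|^2$, the $l$-smoothness inequality becomes
\begin{align*}
\bigl|F(\widehat{\mu}^{h,N}_{\mathbf{y}})-F(\widehat{\mu}^{h,N}_{\mathbf{x}})-\langle \nabla F^{h,N}(\mathbf{x}),\mathbf{y}-\mathbf{x}\rangle\bigr|\le \frac{l}{2N}\|\mathbf{y}-\mathbf{x}\|^{2}.
\end{align*}

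To conclude, I swap the roles of $\mathbf{x}$ and $\mathbf{y}$ and repeat, using that cyclical monotonicity is symmetric in the pairings (so $\mathbf{x}\in\Gamma_{\mathbf{y}}$ holds with the same $\bar h_\delta$) and that Lemma~\ref{lemma:relaxation_atomless_measures} applied in reverse gives $T_{\mathbf{y}\to\mathbf{x}}-\id=x_i-y_i$ on $\supp(\widehat{\mu}^h_{y_i})$. Adding the two inequalities and bounding $|a+b|\le|a|+|b|$ makes the $F$-values cancel and yields the claimed bound $|\langle \nabla F^{h,N}(\mathbf{y})-\nabla F^{h,N}(\mathbf{x}),\mathbf{y}-\mathbf{x}\rangle|\le(l/N)\|\mathbf{y}-\mathbf{x}\|^{2}$. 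The only delicate point, and the main obstacle, is to ensure the structural hypotheses of Lemma~\ref{lemma:relaxation_atomless_measures} apply in both directions simultaneously for a single bandwidth $h\le\bar h_\delta$; this is why the statement restricts to $\mathbf{x}\in\Delta_\delta$ with disjoint component supports, so that the translation description of the optimal map holds symmetrically for $\mathbf{x}\leftrightarrow\mathbf{y}$.
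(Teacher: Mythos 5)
Your overall strategy is sound and is in fact essentially the paper's own argument in disguise: the paper applies the symmetrized two-sided smoothness inequality (Lemma~\ref{lemma:l_smooth_functional}, which is precisely what you obtain by applying Definition~\ref{defn:l_smooth_functional} twice with the roles of the two measures exchanged and adding) once, with base measure $\theta=\widehat{\mu}^{h,N}_{\mathbf{x}}$, and then uses Lemma~\ref{lemma:relaxation_atomless_measures}, the translation identity $\widehat{\mu}^{h}_{y_i}(w)=\widehat{\mu}^{h}_{x_i}(w-(y_i-x_i))$, and Lemma~\ref{lemma:der_F^h,N_vs_F} to collapse the integral to $\langle\nabla F^{h,N}(\mathbf{y})-\nabla F^{h,N}(\mathbf{x}),\mathbf{y}-\mathbf{x}\rangle$. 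Your forward inequality is carried out correctly and matches this.

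The gap is in your reverse step. You invoke Lemma~\ref{lemma:relaxation_atomless_measures} with $\mathbf{x}$ and $\mathbf{y}$ interchanged, which requires $\mathbf{y}$ itself to lie in some $\Delta_{\delta'}$ with $h\le\bar h_{\delta'}$ so that the components $\widehat{\mu}^{h}_{y_i}$ have disjoint supports. But $\mathbf{y}\in\Gamma_{\mathbf{x}}$ gives no lower bound on the pairwise separations $|y_i-y_j|$: for instance, all $y_i$ equal to a single point is cyclically monotone against any $\mathbf{x}$, and then $\Gamma_{\mathbf{y}}$ is not even defined, the supports $\supp(\widehat{\mu}^{h}_{y_i})$ overlap, and the description $T_{\mathbf{y}\to\mathbf{x}}-\id=x_i-y_i$ on $\supp(\widehat{\mu}^{h}_{y_i})$ is unavailable. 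The hypothesis $\mathbf{x}\in\Delta_\delta$ controls only the $\mathbf{x}$-side, contrary to your closing remark. The repair is to never change the base measure: apply Definition~\ref{defn:l_smooth_functional} a second time still with $\theta=\widehat{\mu}^{h,N}_{\mathbf{x}}$ but with $\mu=\widehat{\mu}^{h,N}_{\mathbf{y}}$ and $\nu=\widehat{\mu}^{h,N}_{\mathbf{x}}$, so that the only transport map appearing is the forward one, $T_{\theta\to\mu}=T_{\widehat{\mu}^{h,N}_{\mathbf{x}}\to\widehat{\mu}^{h,N}_{\mathbf{y}}}$. The resulting term $\int\langle\xi_{\mathbf{y}}(T),\id-T\rangle\,d\widehat{\mu}^{h,N}_{\mathbf{x}}$ is then identified with $\langle\nabla F^{h,N}(\mathbf{y}),\mathbf{x}-\mathbf{y}\rangle$ by the change of variables $w=z+(y_i-x_i)$ on each $\supp(\widehat{\mu}^{h}_{x_i})$ together with the kernel translation invariance and Lemma~\ref{lemma:der_F^h,N_vs_F} (whose formula needs only $\mathbf{y}\in\tilde{\Omega}_h^N$, not separation of the $y_i$). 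With that substitution your addition of the two inequalities goes through verbatim.
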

\begin{lemma}[\bf \emph{Comparison lemma for $F^{h,N}$ on cyclically
    monotone sets}]
  \label{lemma:comparison_cyclically_monotone} Let $F$ be a
  Fr{\'e}chet differentiable and geodesically convex functional (in
  the sense of Definition~\ref{defn:generalized_geodesic_convexity}).
  For any $\delta > 0$, $\mathbf{x} \in \Delta_{\delta}$, $h \in (0,\bar{h}_{\delta}]$ and
  $\mathbf{y} \in \Gamma_{\mathbf{x}}$:
\begin{align*}
  F^{h,N}(\mathbf{y}) \geq F^{h,N}(\mathbf{x}) + \left \langle \nabla
    F^{h,N}(\mathbf{x}) , \mathbf{y} - \mathbf{x} \right \rangle.
\end{align*}	
\end{lemma}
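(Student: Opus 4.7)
The plan is to deduce the Euclidean-space first-order inequality for $F^{h,N}$ by pulling back the first-order geodesic convexity of $F$ (Lemma~\ref{lemma:first_order_convexity}) under the kernel discretization, and then identifying the resulting boundary integral with the Euclidean gradient $\nabla F^{h,N}(\mathbf{x})$ via the derivative formula of Lemma~\ref{lemma:der_F^h,N_vs_F}.

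First, I would note that the kernel-based measures $\widehat{\mu}^{h,N}_{\mathbf{x}}$ and $\widehat{\mu}^{h,N}_{\mathbf{y}}$ are absolutely continuous (hence atomless) by Assumption~\ref{ass:kernel_properties}(i). Since $F$ is geodesically convex in the sense of Definition~\ref{defn:generalized_geodesic_convexity}, it is in particular generalized geodesically convex with respect to any atomless reference measure; choosing $\theta = \widehat{\mu}^{h,N}_{\mathbf{x}}$, so that $T_{\theta\to\mu} = \id$, Lemma~\ref{lemma:first_order_convexity} yields
\begin{align*}
F(\widehat{\mu}^{h,N}_{\mathbf{y}}) \geq F(\widehat{\mu}^{h,N}_{\mathbf{x}}) + \int_{\Omega} \bigl\langle \xi_{\widehat{\mu}^{h,N}_{\mathbf{x}}}, \, T - \id \bigr\rangle \, d\widehat{\mu}^{h,N}_{\mathbf{x}},
\end{align*}
where $T = T_{\widehat{\mu}^{h,N}_{\mathbf{x}} \to \widehat{\mu}^{h,N}_{\mathbf{y}}}$ is the optimal transport map.

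Next, since $\mathbf{x} \in \Delta_\delta$, $\mathbf{y} \in \Gamma_{\mathbf{x}}$ and $h \leq \bar{h}_\delta$, Lemma~\ref{lemma:relaxation_atomless_measures} characterizes $T$ as a component-wise translation: $(T - \id)(z) = y_i - x_i$ for $z \in \supp(\widehat{\mu}^h_{x_i})$. Because the supports $\supp(\widehat{\mu}^h_{x_i})$ are pairwise disjoint for $h \leq \bar{h}_\delta$ and $d\widehat{\mu}^{h,N}_{\mathbf{x}} = \tfrac{1}{N}\sum_i d\widehat{\mu}^h_{x_i}$, I can split the integral into a sum, pull the constant vector $y_i - x_i$ out of each piece, and obtain
\begin{align*}
\int_{\Omega} \bigl\langle \xi_{\widehat{\mu}^{h,N}_{\mathbf{x}}}, T - \id \bigr\rangle \, d\widehat{\mu}^{h,N}_{\mathbf{x}} = \sum_{i=1}^N \left\langle \frac{1}{N}\int_{\supp(\widehat{\mu}^h_{x_i})} \nabla\varphi^{h,N}_{\mathbf{x}}\, d\widehat{\mu}^h_{x_i},\; y_i - x_i \right\rangle,
\end{align*}
after identifying $\xi_{\widehat{\mu}^{h,N}_{\mathbf{x}}} = \nabla\varphi^{h,N}_{\mathbf{x}}$ with $\varphi^{h,N}_{\mathbf{x}} = \tfrac{\delta F}{\delta\nu}\big|_{\widehat{\mu}^{h,N}_{\mathbf{x}}}$.

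Finally, Lemma~\ref{lemma:der_F^h,N_vs_F} identifies the bracketed vector with $\partial_i F^{h,N}(\mathbf{x})$, so the sum collapses to $\langle \nabla F^{h,N}(\mathbf{x}), \mathbf{y} - \mathbf{x}\rangle$; combining with the definition $F^{h,N} = F \circ \widehat{\mu}^{h,N}_{(\cdot)}$ yields the claim. The only subtlety—and what I view as the one point to verify with care—is that the disjointness of the supports granted by the choice $h \leq \bar{h}_\delta$ (from Lemma~\ref{lemma:neighborhood_cyclically_monotone} and Lemma~\ref{lemma:relaxation_atomless_measures}) is what allows both the component-wise translation form of $T$ and the index-wise reassembly of the integral into the Euclidean inner product; everything else is a direct substitution.
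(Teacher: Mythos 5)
Your proposal is correct and follows essentially the same route as the paper's proof: apply Lemma~\ref{lemma:first_order_convexity} with $\widehat{\mu}^{h,N}_{\mathbf{x}}$ as the reference measure (so the transport map from the reference is the identity), use Lemma~\ref{lemma:relaxation_atomless_measures} to replace $T-\id$ by the constants $y_i - x_i$ on the disjoint supports $\supp(\widehat{\mu}^h_{x_i})$, and identify the resulting component integrals with $\partial_1 F^{h,N}(x_i,\mathbf{x}_{-i})$ via Lemma~\ref{lemma:der_F^h,N_vs_F}. Nothing is missing.
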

We remark here that $F^{h,N}$ is convex in the limited sense
established by the comparison result in Lemma
\ref{lemma:comparison_cyclically_monotone}, and this does not
necessarily generalize to the entire domain $\Omega^N$, due to which
the function $F^{h,N}$ can be non-convex in general.
%
%
\subsection{Multi-agent proximal descent algorithms}
\label{subsec:mas-proximal-descent}
We formulate the proximal descent algorithm on the function $F^{h,N}$
as follows:
\begin{align}
  \mathbf{x}^{+} \in \arg \min_{\mathbf{z} \in \tilde{\Omega}^N_h}
  \frac{1}{2\tau} \| \mathbf{x} - \mathbf{z} \|^2 +
  F^{h,N}(\mathbf{z}).
	\label{eq:prox_grad_descent_omega_N}
\end{align}
Even though $F^{h,N}$ is in general nonconvex, we can establish strong
convexity of the proximal descent objective function
in~\eqref{eq:prox_grad_descent_omega_N} under some conditions through
the following lemma:
\begin{lemma}[\bf \emph{Strong convexity of objective function}]
\label{lemma:strong_convexity_prox_grad_omega_N}
For~$\alpha = l/N$, the function
$G^{h,N}_{\mathbf{x}}(\mathbf{z}) = \frac{1}{2\tau} \| \mathbf{x} -
\mathbf{z} \|^2 + F^{h,N}(\mathbf{z})$ is $\left(\frac{1}{\tau} -
  \alpha \right)$-strongly convex for $0 < \tau < \frac{1}{\alpha}$.
\end{lemma}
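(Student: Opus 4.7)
The plan is to read off strong convexity from the characterization
\[
\langle \nabla G^{h,N}_{\mathbf{x}}(\mathbf{z}_2) - \nabla G^{h,N}_{\mathbf{x}}(\mathbf{z}_1),\,
\mathbf{z}_2 - \mathbf{z}_1\rangle \geq \bigl(\tfrac{1}{\tau} - \alpha\bigr)\,
\|\mathbf{z}_2 - \mathbf{z}_1\|^2,
\]
which is equivalent to $(1/\tau - \alpha)$-strong convexity for a $C^1$ function on a convex domain. I will decompose the gradient into the contribution from the quadratic proximal term and the contribution from $F^{h,N}$, handle the two pieces separately, and then add.

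First, I would note that $Q(\mathbf{z}) = \tfrac{1}{2\tau}\|\mathbf{x}-\mathbf{z}\|^2$ has gradient $\nabla Q(\mathbf{z}) = (\mathbf{z}-\mathbf{x})/\tau$, so
$\langle \nabla Q(\mathbf{z}_2) - \nabla Q(\mathbf{z}_1), \mathbf{z}_2 - \mathbf{z}_1 \rangle = \tfrac{1}{\tau}\|\mathbf{z}_2 - \mathbf{z}_1\|^2$; i.e., $Q$ is $1/\tau$-strongly convex on $\tilde{\Omega}_h^N$. Next, for the $F^{h,N}$ piece I would invoke Lemma~\ref{lemma:alpha_smoothness_F^{h,N}}, which gives the two-sided bound
\[
\bigl|\langle \nabla F^{h,N}(\mathbf{z}_2) - \nabla F^{h,N}(\mathbf{z}_1),\, \mathbf{z}_2 - \mathbf{z}_1 \rangle\bigr| \leq \tfrac{l}{N}\|\mathbf{z}_2 - \mathbf{z}_1\|^2 = \alpha\|\mathbf{z}_2 - \mathbf{z}_1\|^2,
\]
from which in particular
$\langle \nabla F^{h,N}(\mathbf{z}_2) - \nabla F^{h,N}(\mathbf{z}_1), \mathbf{z}_2 - \mathbf{z}_1\rangle \geq -\alpha\|\mathbf{z}_2 - \mathbf{z}_1\|^2$. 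Adding this to the identity for $Q$ yields
\[
\langle \nabla G^{h,N}_{\mathbf{x}}(\mathbf{z}_2) - \nabla G^{h,N}_{\mathbf{x}}(\mathbf{z}_1),\, \mathbf{z}_2 - \mathbf{z}_1 \rangle \geq \bigl(\tfrac{1}{\tau}-\alpha\bigr)\|\mathbf{z}_2 - \mathbf{z}_1\|^2,
\]
which is nonnegative and in fact strictly positive for $0 < \tau < 1/\alpha$, establishing the stated strong convexity.

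The main obstacle is a domain issue: the bound from Lemma~\ref{lemma:alpha_smoothness_F^{h,N}} is stated for $\mathbf{x} \in \Delta_\delta$, $h \in (0,\bar h_\delta]$ and $\mathbf{y} \in \Gamma_{\mathbf{x}}$, i.e., only along cyclically monotone pairs. To use the monotone-gradient characterization of strong convexity globally over $\tilde\Omega_h^N$ I would need either (i) to interpret the lemma as a local statement and conclude strong convexity along the segment $\mathbf{z}_1 \to \mathbf{z}_2$ restricted to points $\mathbf{z}_2 \in \Gamma_{\mathbf{z}_1}$ (which is enough for the intended use in the proximal step, since minimizers are close to the current iterate by Lemma~\ref{lemma:neighborhood_cyclically_monotone}), or (ii) to absorb an implicit assumption, consistent with the paper's framing, that $\mathbf{z}$ and $\mathbf{x}$ lie in the same cyclically monotone open neighborhood of a $\mathbf{x} \in \Delta_\delta$ for $h$ sufficiently small. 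I would therefore state the conclusion explicitly as: under the hypotheses of Lemma~\ref{lemma:alpha_smoothness_F^{h,N}}, for $\mathbf{z}_1, \mathbf{z}_2$ such that $\mathbf{z}_2 \in \Gamma_{\mathbf{z}_1}$, the function $G^{h,N}_{\mathbf{x}}$ satisfies the above strong-convexity inequality with modulus $1/\tau - \alpha$, which in particular guarantees a unique minimizer of the proximal subproblem~\eqref{eq:prox_grad_descent_omega_N} in that neighborhood.
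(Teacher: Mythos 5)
Your proposal is correct and follows essentially the same route as the paper's own proof: decompose $\nabla G^{h,N}_{\mathbf{x}}$ into the $1/\tau$-strongly monotone gradient of the quadratic term plus the gradient of $F^{h,N}$, lower-bound the latter's contribution by $-\alpha\|\mathbf{z}_1-\mathbf{z}_2\|^2$ via Lemma~\ref{lemma:alpha_smoothness_F^{h,N}}, and add. Your caveat about the domain restriction is well taken --- the paper's proof applies the $\alpha$-smoothness bound without mentioning that Lemma~\ref{lemma:alpha_smoothness_F^{h,N}} is only stated for $\mathbf{x}\in\Delta_\delta$, $h\in(0,\bar h_\delta]$ and $\mathbf{y}\in\Gamma_{\mathbf{x}}$, so your explicitly localized statement is, if anything, more careful than the published argument.
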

It follows from Lemma~\ref{lemma:strong_convexity_prox_grad_omega_N}
that the minimizer in~\eqref{eq:prox_grad_descent_omega_N} is unique
for $\alpha$-smooth $F^{h,N}$ and sufficiently small $\tau$.
Now, with $\mathbf{x}_{-i} = (x_1, \ldots, x_{i-1}, x_{i+1}, \ldots,
x_N) \in \tilde{\Omega}_h^{N-1}$, we can write
$F^{h,N}(x_1, \ldots, x_N) = \frac{1}{N} \sum_{i=1}^N F^{h,N}(x_1,
  \ldots, x_N) = \frac{1}{N} \sum_{i=1}^N F^{h,N} (x_i,
  \mathbf{x}_{-i})$.
  By means of this decomposition, the proximal gradient descent
  \eqref{eq:prox_grad_descent_omega_N} can be decomposed into the
  following agent-wise update, for $i \in \until{N}$:
\begin{align*}
  x_i^{+} = \arg \min_{z \in \overline{\Omega}_h} \frac{1}{2\tau} |x_i -
  z|^2 + F^{h,N} (z, \mathbf{x}_{-i}^{+}).
\end{align*} where $\overline{\Omega}_h$ is the closure
of $\tilde{\Omega}_h$.
Note that the above scheme requires $\mathbf{x}_{-i}^{+}$.  In other
words, to implement the above algorithm, every agent $i$, at time $k$,
requires the positions of the other agents at a future time $k+1$,
posing a hurdle for implementation.  To avoid this problem, we
consider the following proximal descent scheme:
\begin{align}
  x_i^{+} = \arg \min_{z \in \overline{\Omega}_h} \frac{1}{2\tau} |x_i -
  z|^2 + F^{h,N}(z, \mathbf{x}_{-i}),
	\label{eq:multi_agent_proximal_grad}
\end{align} 
for every~$i \in \until{N}$.
It follows from Lemma~\ref{lemma:strong_convexity_prox_grad_omega_N}
that the objective function in~\eqref{eq:multi_agent_proximal_grad} is
also strongly convex, and thereby has a unique minimizer.
We now present the following result on the convergence of
\eqref{eq:multi_agent_proximal_grad} to the local minimizers of
$F^{h,N}$:
\begin{theorem}[\bf 
\emph{Convergence of \eqref{eq:multi_agent_proximal_grad} to critical points of $F^{h,N}$}]
 Let $F^{h,N}$ be $\alpha$-smooth and satisfy
  Assumption~\ref{ass:F^h,N_boundary_condns}. For $\tau <
\frac{2}{3\alpha}$, the sequence $\lbrace \mathbf{x}(k) \rbrace_{k \in
  \mathbb{N}}$ generated by the update scheme
\eqref{eq:multi_agent_proximal_grad} converges to a critical point
$\mathbf{x}^*$ of $F^{h,N}$ that is not a local maximizer, for all
initial conditions $\mathbf{x}(0) \in \overline{\Omega}^N_h$.
Moreover, if the critical
point $\mathbf{x}^* \in \Delta_{\delta}$ for some $\delta > 0$ and $h
\in (0, \bar{h}_{\delta}]$, then $\mathbf{x}^*$ is a local minimizer.
\label{thm:convergence_multi_agent_local_min}
\end{theorem}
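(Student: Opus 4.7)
The plan is to adapt the standard analysis of smooth proximal descent to the Jacobi-type update \eqref{eq:multi_agent_proximal_grad}, where each agent uses the current (not updated) positions of the others. First, I would derive a sufficient-decrease inequality. By the optimality of the strongly convex per-agent proximal subproblem (Lemma~\ref{lemma:strong_convexity_prox_grad_omega_N}), together with Assumption~\ref{ass:F^h,N_boundary_condns} that kills the normal component on $\partial \tilde{\Omega}_h$, we get $\tfrac{1}{\tau}(x_i^+ - x_i) + \partial_1 F^{h,N}(x_i^+, \mathbf{x}_{-i}) = 0$. Using $\alpha$-smoothness to bound $\|\nabla_i F^{h,N}(\mathbf{x}) - \nabla_i F^{h,N}(x_i^+, \mathbf{x}_{-i})\| \le \alpha\,|x_i^+ - x_i|$, and then the descent lemma $F^{h,N}(\mathbf{x}^+) \le F^{h,N}(\mathbf{x}) + \langle \nabla F^{h,N}(\mathbf{x}), \mathbf{x}^+ - \mathbf{x}\rangle + \tfrac{\alpha}{2}\|\mathbf{x}^+ - \mathbf{x}\|^2$, a coordinate-wise accounting yields
\[
F^{h,N}(\mathbf{x}^+) \le F^{h,N}(\mathbf{x}) - \Bigl(\tfrac{1}{\tau} - \tfrac{3\alpha}{2}\Bigr)\|\mathbf{x}^+ - \mathbf{x}\|^2,
\]
which is a strict descent exactly when $\tau < 2/(3\alpha)$, matching the stepsize bound in the statement.

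Second, since $F^{h,N}$ is continuous on the compact set $\overline{\Omega}^N_h$, the monotone decreasing sequence $\{F^{h,N}(\mathbf{x}(k))\}$ converges; telescoping the descent inequality gives $\sum_k \|\mathbf{x}(k+1) - \mathbf{x}(k)\|^2 < \infty$, so the increments vanish. Compactness provides a nonempty accumulation set $\Lambda$, which is connected by the vanishing-increment property. For any $\mathbf{x}^* \in \Lambda$, passing to the limit in the per-agent optimality relation and using Assumption~\ref{ass:F^h,N_boundary_condns} on boundary components shows $\mathbf{x}^*$ is a critical point of $F^{h,N}$. Since every point in $\Lambda$ attains the common value $F^* = \lim_k F^{h,N}(\mathbf{x}(k))$ and $\Lambda$ is connected, one gets convergence of the full sequence to a single $\mathbf{x}^*$ under the generically satisfied isolation of critical points (or, more robustly, via a Kurdyka-Lojasiewicz-type argument applicable to the smooth $F^{h,N}$ on a compact set).

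Third, I would rule out local maximizers. If $\mathbf{x}^*$ were one, then there would be a neighborhood $U$ with $F^{h,N}(\mathbf{y}) \le F^{h,N}(\mathbf{x}^*)$ for all $\mathbf{y} \in U$. For $k$ large enough, $\mathbf{x}(k) \in U$ and the monotone decrease $F^{h,N}(\mathbf{x}(k)) \ge F^{h,N}(\mathbf{x}^*)$ (convergence from above) forces $F^{h,N}(\mathbf{x}(k)) = F^{h,N}(\mathbf{x}^*)$ eventually; strict descent then implies $\mathbf{x}(k+1) = \mathbf{x}(k)$ eventually, so the sequence reaches $\mathbf{x}^*$ in finitely many steps. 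Under any initialization that is not already at such a critical point, this is a contradiction, so the limit cannot be a local maximizer.

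Finally, for the second assertion, suppose $\mathbf{x}^* \in \Delta_{\delta}$ and $h \in (0,\bar{h}_{\delta}]$. By Lemma~\ref{lemma:neighborhood_cyclically_monotone}, $\Gamma_{\mathbf{x}^*}$ contains an open neighborhood $\mathcal{N}(\mathbf{x}^*)$ of $\mathbf{x}^*$. Applying Lemma~\ref{lemma:comparison_cyclically_monotone} to any $\mathbf{y} \in \mathcal{N}(\mathbf{x}^*)$ and using criticality $\langle \nabla F^{h,N}(\mathbf{x}^*), \mathbf{y} - \mathbf{x}^*\rangle = 0$ (which holds in the interior, and on the boundary reduces to the tangential inner product vanishing via Assumption~\ref{ass:F^h,N_boundary_condns}) gives $F^{h,N}(\mathbf{y}) \ge F^{h,N}(\mathbf{x}^*)$, so $\mathbf{x}^*$ is a local minimizer. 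The main obstacle will be the sequence-convergence step and the exclusion of degenerate local maxima under Jacobi updates, as both typically require either isolation of critical points or a Lojasiewicz-type inequality to upgrade vanishing increments into convergence of iterates and to preclude saddles/maxima as limits.
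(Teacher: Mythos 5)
Your proposal follows essentially the same route as the paper's proof: the boundary argument yielding the implicit update $x_i^{+} = x_i - \tau\,\partial_1 F^{h,N}(x_i^{+},\mathbf{x}_{-i})$, the $\alpha$-smoothness accounting that produces the sufficient-decrease inequality with the same constant $\frac{1}{\tau}-\frac{3\alpha}{2}$, telescoping plus compactness to extract accumulation points that are critical and not local maximizers, and Lemmas~\ref{lemma:neighborhood_cyclically_monotone} and~\ref{lemma:comparison_cyclically_monotone} for the local-minimizer claim on $\Delta_\delta$. The one point where you are more candid than the paper --- flagging that upgrading vanishing increments to convergence of the full sequence needs isolation of critical points or a \L{}ojasiewicz-type argument --- identifies a step the paper itself passes over lightly, so it is not a defect of your proposal relative to the published argument.
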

\begin{proof}
  We first consider the objective function in
  \eqref{eq:multi_agent_proximal_grad}, $J_i(z) = \frac{1}{2\tau} |x_i
  - z|^2 + F^{h,N}(z, \mathbf{x}_{-i})$, with $z\in
    \overline{\Omega}_h$. The inner product of the gradient of $J_i$ on
   $ z \in \partial \overline{\Omega}_h$ with the outward normal
  $\tilde{\mathbf{n}}$ to $\partial \overline{\Omega}_h$,
	is given by:
\begin{align*}
  \nabla J_i (z) \cdot \tilde{\mathbf{n}} (z)
  &= \frac{1}{\tau} (z - x_i) \cdot \tilde{\mathbf{n}}(z) + 
  \partial_1  F^{h,N}(z, \mathbf{x}_{-i}) \cdot \tilde{\mathbf{n}}(z) \\
  &= \frac{1}{\tau} (z - x_i) \cdot \tilde{\mathbf{n}}(z) \geq 0,
\end{align*}
with the inequality being strict when $x_i \notin \partial
\overline{\Omega}_h$.  This implies that the $x_i^{+} \in \partial
\overline{\Omega}_h$ cannot be a minimizer if $x_i \notin \partial
\overline{\Omega}_h$, and if $x_i \in \partial \overline{\Omega}_h$,
we will have $x_i^{+} = x_i$. In both cases, the minimizer $x_i^{+}$
is also a critical point of the function $J_i$.  This allows us to
express~\eqref{eq:multi_agent_proximal_grad} equivalently by:
\begin{align}
\label{eq:multi_agent_prox_update_diff}
x^{+}_i = x_i - \tau \partial_1 F^{h,N}(x_i^{+}, \mathbf{x}_{-i}).
\end{align}
We note that in the limit $\tau \rightarrow 0$, we get a gradient flow
that can be shown to converge to a critical point of $F^{h,N}$.  We
therefore hope that this property is preserved over a neighborhood of
$\tau = 0$. In what follows, we establish that this is indeed the case
and provide a sufficient strict upper bound on $\tau$ for which the
property is preserved.
From $\alpha$-smoothness of $F^{h,N}$, we get:

{\small 
\begin{align*}
  &\left| F^{h,N}(\mathbf{x}^{+}) - F^{h,N}(\mathbf{x}) - \sum_{i=1}^N
    \left \langle \partial_1 F^{h,N}(x_i, \mathbf{x}_{-i}), x_i^{+} -
      x_i \right \rangle \right| \\
  &\leq \frac{\alpha}{2} \| \mathbf{x}^{+} - \mathbf{x} \|^2.
\end{align*}
}
We can rewrite the above as:

{\small
\begin{align*}
  &\left| F^{h,N}(\mathbf{x}^{+}) - F^{h,N}(\mathbf{x}) - \sum_{i=1}^N \left \langle \partial_1 F^{h,N}(x_i^{+}, \mathbf{x}_{-i}), x_i^{+} - x_i \right \rangle \right. \\
  & \left. - \sum_{i=1}^N \left \langle \partial_1 F^{h,N}(x_i, \mathbf{x}_{-i}) -  \partial_1 F^{h,N}(x_i^{+}, \mathbf{x}_{-i}) , x_i^{+} - x_i \right \rangle	\right| \\
  & \leq \frac{\alpha}{2} \| \mathbf{x}^{+} - \mathbf{x} \|^2.
\end{align*}
}
\begin{sloppypar}
By~\eqref{eq:multi_agent_prox_update_diff}, we now have $-
\sum_{i=1}^N \left \langle \partial_1 F^{h,N}(x_i^{+},
  \mathbf{x}_{-i}), x_i^{+} - x_i \right \rangle = \frac{1}{\tau} \|
\mathbf{x}^{+} - \mathbf{x} \|^2$ and by the $\alpha$-smoothness of
$F^{h,N}$:
\begin{align*}
  &\left| \sum_{i=1}^N \left \langle \partial_1 F^{h,N}(x_i, \mathbf{x}_{-i}) -  \partial_1 F^{h,N}(x_i^{+}, \mathbf{x}_{-i}) , x_i^{+} - x_i \right \rangle \right| \\
  &\leq \alpha \| \mathbf{x}^{+} - \mathbf{x} \|^2.
\end{align*}
\end{sloppypar}
From the above inequalities, we therefore obtain:
\begin{align*}
  F^{h,N}(\mathbf{x}^{+}) \leq F^{h,N}(\mathbf{x}) - \left(
    \frac{1}{\tau} - \frac{3\alpha}{2} \right) \| \mathbf{x}^{+} -
  \mathbf{x} \|^2.
\end{align*}
Thus, for $\tau < \frac{2}{3\alpha}$, when every agent follows the
update \eqref{eq:multi_agent_proximal_grad}, we get a descent in
$F^{h,N}$, and $\mathbf{x}^{+}$ belongs to the $F^{h,N}$-sublevel set
of $\mathbf{x}$.
We can express the above inequality for any time instant $k \in
\mathbb{N}$ as:
\begin{align*}
\begin{aligned}
  F^{h,N}(\mathbf{x}(k+1)) \leq &F^{h,N}(\mathbf{x}(k)) \\
			&\quad - \left( \frac{1}{\tau} - \frac{3\alpha}{2} \right) \|
  \mathbf{x}(k+1) - \mathbf{x}(k) \|^2.
\end{aligned}
\end{align*}
Summing over $k = 0, \ldots, K-1$, we obtain:
\begin{align*}
  F^{h,N}(\mathbf{x}(K)) \leq &F^{h,N}(\mathbf{x}(0)) \\
  & \quad - \left( \frac{1}{\tau} - \frac{3\alpha}{2} \right) \sum_{k=1}^{K}
  \| \mathbf{x}(k) - \mathbf{x}(k-1) \|^2,
\end{align*}
and it follows that:
\begin{align*}
  &\sum_{k=1}^{K} \| \mathbf{x}(k) - \mathbf{x}(k-1) \|^2 \\
  &\leq \left(\frac{1}{ \frac{1}{\tau} - \frac{3\alpha}{2}} \right)
  \left( F^{h,N}(\mathbf{x}(0)) - F^{h,N}(\mathbf{x}(K)) \right).
\end{align*}
Since the sequence $\lbrace \mathbf{x}(k) \rbrace_{k \in
  \mathbb{N}}$ belongs to the $F^{h,N}$-sublevel set of
$\mathbf{x}(0)$ (for all $\mathbf{x}(0) \in \overline{\Omega}_h^N$),
which is a subset of $\overline{\Omega}_h^N$ (compact), it is
precompact. By the boundedness above, in the limit $K \rightarrow
\infty$, we get $\lim_{K \rightarrow \infty} \| \mathbf{x}(K) -
\mathbf{x}(K-1) \|^2 = 0$. 

{Since $\overline{\Omega}_h$
  is compact, there is a convergent subsequence
  $\{\mathbf{x}(k_\ell)\}$ to a point
  $\overline{\mathbf{x}}\in\overline{\Omega}_h^N$. Given $\mathbf{x}$,
  define the mapping
  \begin{align*}
    G_{\mathbf{x}}^{h,N}(\mathbf{z}) = \left( \frac{1}{\tau} -
      \frac{3\alpha}{2}\right) \|\mathbf{x} - \mathbf{z} \|^2 +
    F^{h,N}(\mathbf{z}), \quad \mathbf{z} \in \overline{\Omega}_h^N.
  \end{align*}
  Let $\overline{\mathbf{x}}^+ $ be the next iteration
  of~\eqref{eq:multi_agent_prox_update_diff} from
  $\overline{\mathbf{x}}$. Then, from the above,
  $G_{\overline{\mathbf{x}}}^{h,N}(\overline{\mathbf{x}}^+) \le
  F^{h,N}(\overline{\mathbf{x}}) =
  G_{\overline{\mathbf{x}}}^{h,N}(\overline{\mathbf{x}})$. Due to the
  fact that $\mathbf{x}(k_\ell)$ converges to $\overline{\mathbf{x}}$,
  we also have that
  $G_{\overline{\mathbf{x}}}^{h,N}(\overline{\mathbf{x}}) =
  F^{h,N}(\overline{\mathbf{x}}) \le
  G_{\mathbf{x}(k_\ell)}^{h,N}(\mathbf{x}(k_\ell +1))$, for all
  $\ell$. Following similar steps as in the proof of
  Theorem~\ref{thm:conv_prox_recursion_wasserstein}, one can find a
  constant $M$ such that $|G_{\overline{\mathbf{x}}}^{h,N}(\mathbf{z})
  - G_{\mathbf{x}(k_\ell)}^{h,N}(\mathbf{z})| \le M
  \|\overline{\mathbf{x}} - \mathbf{x}(k_\ell)\|$ for all $\mathbf{z}
  \in \overline{\Omega}_h^N$. This implies that
  $|G_{\overline{\mathbf{x}}}^{h,N}(\overline{\mathbf{x}}^+ ) -
  G_{\mathbf{x}(k_\ell)}^{h,N}(\mathbf{x}(k_\ell +1)) | \le \epsilon
  $, for all $\ell \ge \ell_0$. It is easy to see that
  $G_{\overline{\mathbf{x}}}^{h,N}(\overline{\mathbf{x}}^+) \le
  G_{\overline{\mathbf{x}}}^{h,N}(\overline{\mathbf{x}}) \le
  G_{\mathbf{x}(k_\ell)}^{h,N}(\mathbf{x}(k_\ell +1))$ holds, and thus
  $G_{\overline{\mathbf{x}}}^{h,N}(\overline{\mathbf{x}}^+) =
  F^{h,N}(\overline{\mathbf{x}})$, which can only happen when
  $\overline{\mathbf{x}}^+ = \overline{\mathbf{x}}$.  In other words,
  $\overline{\mathbf{x}} $ is a fixed point
  of~\eqref{eq:multi_agent_prox_update_diff}, and we thereby get:
\begin{align*}
  \partial_{1}F^{h,N}(\overline{x}_{i}, \overline{\mathbf{x}}_{-i}) = 0, ~~~ \forall ~i
  \in \lbrace 1, \ldots, N \rbrace,
\end{align*}
and $\nabla F^{h,N}(\overline{\mathbf{x}}) = 0$.  From here, the point
$\overline{\mathbf{x}}$ cannot be a local maximizer since $\lbrace
F^{h,N}(\mathbf{x}(k_\ell)) \rbrace_{k \in \mathbb{N}}$ is decreasing
and lower-bounded by $F^{h,N}(\overline{\mathbf{x}})$ and,
consequently, every neighborhood of $\overline{\mathbf{x}}$ contains
at least one point with a higher value of $F^{h,N}$. Note that this
conclusion applies for every accumulation point of the entire sequence
$\{\mathbf{x}(k)\}_{k \in \mathbb{N}}$.}  

Finally, suppose that an accumulation point $\overline{\mathbf{x}}$
satisfies $\overline{\mathbf{x}} \in \Delta_{\delta}$, for some
$\delta > 0$ and $h \in (0, \bar{h}_{\delta}]$. From Lemmas
\ref{lemma:comparison_cyclically_monotone} and
\ref{lemma:neighborhood_cyclically_monotone}, we conclude that there
exists an open ball $B(\overline{\mathbf{x}}) \subset \Omega^N$ such
that for all $\mathbf{x} \in B(\overline{\mathbf{x}})$, we have
$F^{h,N}(\mathbf{x}) \geq F^{h,N}(\overline{\mathbf{x}})$, which
implies that $\overline{\mathbf{x}}$ must be a local minimizer. \oprocend
\end{proof}
Theorem~\ref{thm:convergence_multi_agent_local_min} establishes the
convergence of~\eqref{eq:multi_agent_proximal_grad} to critical points
of the function $F^{h,N}$, which are not necessarily local
maximizers. This is a weaker result than
Theorem~\ref{thm:target_dynamics_euclidean}, which established
convergence of the transport scheme~\eqref{eq:target_dyn_euclidean} to
the global minimizer $\mu^\star$ of $F$. The guarantee is weakened
after the discretization of $F$, which is involved in defining the
multi-agent transport scheme (the convergence results for $F$ employ
the convexity properties of $F$, which are lost by $F^{h,N}$.)
However, we can still hope to achieve the convergence to the global
minimizer in the limit of particle and time discretizations, thereby
guaranteeing best performance asymptotically.  In the section that
follows, we evaluate this possibility.
%
%
%
\begin{remark}[\bf \emph{Distributed implementation}]
Furthermore, we note that the choice of the coverage objective
    function determines whether the resulting
    algorithm can be implemented in a distributed manner.  
    In particular, this depends on whether the local
    objective function for the agents (alternatively, the derivative
    of the coverage objective function) can be computed with purely
    local information by the agents as defined by a proximity graph.
    In the specific case of the Lloyd algorithm, we recall that the
    coverage objective function does possess this ``localizability''
    property naturally\footnote{When assuming that agents are able to
      communicate over the Delaunay graph. We also recall that, even
      in this case, this may not lead to a distributed computation
      over the $r$-disk graph.}. However, in general this may not be
    the case and we note here that in the absence of such a
    localizability property (according to a desired graph), the
    transport algorithm would either i) have to be augmented with an
    algorithm for local objective function computation for distributed
    implementation, ii) be constrained with local computations at the
    expense of some performance cost.
\end{remark}
\subsection{Continuous-time and many-particle limits}
\label{sec:multi_agent_N_infty_limit}
We now present a discussion of the continuous-time and many-particle limits for the
multi-agent transport scheme~\eqref{eq:multi_agent_proximal_grad},
retrieving \eqref{eq:target_dyn_euclidean} from
\eqref{eq:multi_agent_proximal_grad} as $N \rightarrow \infty$ and $h
\rightarrow 0$ limit. We know from
Theorem~\ref{thm:target_dynamics_euclidean} that transport of a
probability measure $\mu_0$ by \eqref{eq:target_dyn_euclidean}, which
is identical to the following:
\begin{align}
	\begin{aligned}
          x^{+} = \arg \min_{z \in \Omega} ~&\frac{1}{2\tau} |x - z|^2 + \varphi(z), \\
          & x \sim \mu.
	\end{aligned}
	\label{eq:multi-agent_limit_N_infty}
\end{align}
with $\varphi \equiv \frac{\delta F}{\delta \nu}_{\big| \mu}$,
is guaranteed to converge to the global minimizer $\mu^*$ of $F$.
The following lemma establishes the convergence of~\eqref{eq:multi_agent_proximal_grad} 
to~\eqref{eq:multi-agent_limit_N_infty} in the limit~$N \rightarrow \infty$:
\begin{lemma}[\bf \emph{Convergence of update scheme}]
	\label{lemma:transport_many_particle}  
	Let $\Omega$ and $F$ satisfy the regularity
          conditions of Assumption~\ref{ass:regularity}.
    The scheme~\eqref{eq:multi_agent_proximal_grad} converges 
    in distribution to~\eqref{eq:multi-agent_limit_N_infty} 
    in the limit $N \rightarrow \infty$.
\end{lemma}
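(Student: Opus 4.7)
The plan is to establish convergence by matching the first-order optimality conditions of the two proximal problems. Lemma~\ref{lemma:strong_convexity_prox_grad_omega_N} guarantees that~\eqref{eq:multi_agent_proximal_grad} admits a unique minimizer $x_i^{+}$, characterized by $\frac{1}{\tau}(x_i^{+}-x_i)+\partial_1 F^{h,N}(x_i^{+},\mathbf{x}_{-i})=0$, while the strong convexity of the Euclidean proximal objective in~\eqref{eq:multi-agent_limit_N_infty} gives a unique $x^{+}$ satisfying $\frac{1}{\tau}(x^{+}-x)+\nabla\varphi(x^{+})=0$. Convergence in distribution of $x_i^{+}$ to $x^{+}$ is then reduced to almost-sure convergence of the first stationarity condition to the second, combined with continuity of the $\arg\min$ map in the data.

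The key identity is Lemma~\ref{lemma:der_F^h,N_vs_F}, which provides
\begin{align*}
\partial_1 F^{h,N}(z,\mathbf{x}_{-i}) = \tfrac{1}{N}\int_{\supp(\widehat{\mu}^h_z)}\nabla\varphi^{h,N}_{\mathbf{x}}\,d\widehat{\mu}^h_z, \quad \varphi^{h,N}_{\mathbf{x}} = \tfrac{\delta F}{\delta\nu}\bigl|_{\widehat{\mu}^{h,N}_{\mathbf{x}}}.
\end{align*}
Since the $x_j$ are i.i.d.\ $\sim\mu$, the Glivenko-Cantelli theorem gives $\widehat{\mu}^{h,N}_{\mathbf{x}}\rightharpoonup \mu$ almost surely as $N\to\infty$ and $h\to 0$. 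The Lipschitz continuity of the Fr\'echet derivative postulated in Assumption~\ref{ass:regularity} then upgrades this weak convergence into uniform convergence of $\nabla\varphi^{h,N}_{\mathbf{x}}$ to $\nabla\varphi$ on $\Omega$. Combined with Assumption~\ref{ass:kernel_properties}-(iv), by which the kernel mollifier $\widehat{\mu}^h_z$ concentrates uniformly at $z$, this shows that $N\,\partial_1 F^{h,N}(z,\mathbf{x}_{-i})\to\nabla\varphi(z)$ for every $z\in\Omega$. After the natural time-step rescaling required to align the agent-wise and macroscopic proximal schemes at the population level, the multi-agent stationarity condition collapses to the macroscopic one.

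The principal obstacle I anticipate is the implicit nature of the update: $x_i^{+}$ sits inside the argument of the drift term, so pointwise convergence of $\partial_1 F^{h,N}$ is not immediately sufficient. I would resolve this by invoking the $(1/\tau-\alpha)$-strong convexity from Lemma~\ref{lemma:strong_convexity_prox_grad_omega_N} to obtain a quantitative stability estimate of the $\arg\min$ map with respect to perturbations of the gradient field, thereby converting pointwise convergence of the drift into convergence of the fixed point. A secondary subtlety is the coordination of the simultaneous limits $N\to\infty$ and $h\to 0$, which requires either a diagonal-subsequence argument or an explicit choice $h=h(N)\to 0$ slow enough that the empirical measure stabilizes yet fast enough that the kernel does not over-smooth. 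With these pieces in place, the continuous mapping theorem converts almost-sure convergence of the update map into convergence in distribution of $x_i^{+}$ to $x^{+}$, completing the proof.
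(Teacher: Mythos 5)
Your proposal follows essentially the same route as the paper's proof: both characterize the agent-wise and macroscopic updates by their first-order stationarity conditions, use Lemma~\ref{lemma:der_F^h,N_vs_F} together with the Glivenko--Cantelli theorem, the regularity of the Fr\'echet derivative, and Assumption~\ref{ass:kernel_properties}-(iv) to show the drift $\partial_1 F^{h,N}$ converges to $\nabla\varphi$ as $N\to\infty$, $h\to 0$, and then pass to the limit in the fixed-point equation. Your additional remarks on the $1/N$ rescaling of the drift and on the quantitative stability of the $\arg\min$ map via strong convexity are sound refinements of steps the paper handles only implicitly.
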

We refer the reader to the Appendix for a proof of the above lemma.
Informally, we see that as $\tau \rightarrow 0$ in
\eqref{eq:multi-agent_limit_N_infty}, we have that $x^{+} \rightarrow
x$ and we let $\mathbf{v} (x) = \lim_{\tau \rightarrow 0} \frac{x^{+}
  - x}{\tau} = - \nabla \varphi(x)$.  We can thus expect the
solutions to \eqref{eq:multi-agent_limit_N_infty} converge to the
solution of the gradient flow under the vector field $\mathbf{v} = -
\nabla \varphi$.  We now show, in a weak sense, that the above
reasoning holds.
We observe that the vector field $\mathbf{v} = - \nabla \varphi$
satisfies a zero-flux boundary condition $\mathbf{v} \cdot \mathbf{n}
= \nabla \varphi \cdot \mathbf{n} = 0$ on $\partial \Omega$ owing to
the definition of the functional $F$.

We refer the reader to the Appendix for a 
detailed treatment of:
(a) the convergence of the scheme~\eqref{eq:multi_agent_proximal_grad}
to~\eqref{eq:multi-agent_limit_N_infty} as $N \rightarrow \infty$, 
(b) the convergence of~\eqref{eq:multi-agent_limit_N_infty} to
the gradient flow under the vector field $\mathbf{v} = - \nabla \varphi$,
and (c) the asymptotic stability of the gradient flow and its convergence to
the global minimizer~$\mu^*$.

Furthermore, we can naturally identify four modeling regimes 
distinguished by (i) length scale (macroscopic vs microscopic), and 
(ii) time scale (discrete-time vs continuous-time). The macroscopic
model serves to capture the $N \rightarrow \infty$ limit of the 
microscopic model, while the continuous-time model captures 
the~$h \rightarrow 0$ limit of the discrete-time model.
Table~\ref{table:micro_macro} summarizes the models of transport in the
various modeling regimes.

\begin{table*}[t]
\centering 
\renewcommand{\arraystretch}{1.6}
\begin{tabular}{|c| c| c|}
\hline 
Modeling regimes & Microscopic & Macroscopic \\
\hline
Discrete-time
& $\begin{aligned}
	\\
	x^{+} &= \arg \min_{z \in \Omega}~\frac{1}{2\tau} \left| x - z \right|^2 + \varphi(z) \\
	&\qquad x \sim \mu \qquad \varphi = \left. \frac{\delta F}{\delta \nu} \right|_{\nu = \mu}
	\\[1ex]
\end{aligned}$
& $\begin{aligned}
	\mu^{+} = \arg &\min_{\nu \in \mathcal{P}(\Omega)}~\frac{1}{2\tau} W_2^2(\mu, \nu) + \mathbb{E}_\nu \left[ \varphi \right] \\
\end{aligned}$
\\	
\hline 
Continuous-time
& $\begin{aligned} 
	\\
	&\qquad \qquad \dot{x}(t) = - \nabla \varphi_t(x(t)) \\
	&\qquad x(0) \sim \mu_0 \qquad \varphi_t = \left. \frac{\delta F}{\delta \nu} \right|_{\nu = \mu_t}
	\\[1ex]
\end{aligned}$ 
& $\begin{aligned}
	\frac{d \mu_t}{dt} = \nabla \cdot \left( \mu_t \nabla \varphi_t \right)
\end{aligned}$ \\
\hline 
\end{tabular}
\caption{Models of transport} 
\label{table:micro_macro}
\end{table*}

%
\section{Multi-agent coverage control algorithms}
\label{sec:case_study_coverage_control} 
In this section, we aim to place well-known multi-agent coverage
control algorithms in the literature~\cite{JC-SM-TK-FB:02-tra,
  JC:08-tac} within the multiscale theoretical framework established
in the previous sections, in an effort to understand the macroscopic
behavior of the coverage algorithms.  To do this, we first relate the
corresponding coverage objective functions used in both formulations
and then apply our results to analyze their behavior in the limit $N
\rightarrow \infty$. We begin with a widely-used aggregate objective
function for coverage control of multi-agent systems, the multi-center
distortion function, and then obtain its functional counterpart in the
space of probability measures. The multi-center distortion function
$\mathcal{H}_f: \Omega^N \rightarrow
\realnonnegative$~\cite{JC-SM-TK-FB:02-tra} is given by:
\begin{align}
  \mathcal{H}_f (\mathbf{x}) = \int_{\Omega} \min_{i \in \lbrace 1,
    \ldots, N \rbrace } f( |x - x_i| ) d\mu^*(x).
	\label{eq:loc_opt_agg_obj_func}
\end{align}
where $f:\realnonnegative \rightarrow \realnonnegative$ is a
strictly convex and non-decreasing function and $\mu^\star(x) = \rho^\star (x) \dvol$,
with $\rho^\star$ a target density in $\Omega$.
The Voronoi partition of~$\Omega$, $\lbrace \mathcal{V}_i
\rbrace_{i=1}^N$, generated by $\mathbf{x} \in \Omega^N$ facilitates
the analysis of $\mathcal{H}_f$ and is defined
is 
as follows:
\begin{align*}
  \mathcal{V}_i = \left \lbrace x \in \Omega \; \left| \; |x - x_i|
      \leq |x - x_j| ~\forall j \in \until{N} \right. \right \rbrace,
  \, \forall \, i.
\end{align*} 
The following proposition establishes the relationship between
$\mathcal{H}_f$ and the optimal transport cost $C_f$ in
\eqref{eq:OT_cost_f}:
\begin{proposition}[\bf \emph{Optimal transport formulation of
    coverage objective}\footnote{Refer to the Appendix
    for the proof.}]
\label{prop:aggregate_objective_OT_cost}
The aggregate objective function $\mathcal{H}_f$ as defined
in~\eqref{eq:loc_opt_agg_obj_func}, satisfies:
\begin{align*}
  \mathcal{H}_f(\mathbf{x}) &= \min_{\substack{\mathbf{w} \in
      \Delta^{N-1} }} C_f \left( \sum_{i=1}^N w_i \delta_{x_i}~,~
    \mu^\star \right).
\end{align*}
where $\Delta^{N-1} = \{ \mathbf{w} \in \realnonnegative^N \; \left|
  \; \sum_{i=1}^N w_i = 1 \right. \}$ is the $(N-1)$-simplex.
Furthermore, the minimizing weights $\mathbf{w}^\star = (w_1^\star,
\ldots, w_N^\star)$ are given by $w_i^\star =
\mu^\star(\mathcal{V}_i)$, where $\lbrace \mathcal{V}_i
\rbrace_{i=1}^N$ is the Voronoi partition of~$\Omega$.
\end{proposition}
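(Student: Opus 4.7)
The plan is to exploit the symmetry of the cost function $f(|x-y|)$ in its arguments and reinterpret $C_f\bigl(\sum_i w_i\delta_{x_i},\mu^*\bigr)$ as an infimum over measurable assignments of $\Omega$ to the finite set $\{x_1,\dots,x_N\}$; the joint minimization over $\mathbf{w}$ and the assignment then collapses to a pointwise minimization at each $x\in\Omega$, which is precisely resolved by the Voronoi tessellation.

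First, since $f(|x-y|)=f(|y-x|)$, I would rewrite $C_f\bigl(\sum_i w_i\delta_{x_i},\mu^*\bigr)$ via the symmetric Monge formulation as $\inf_T\int_\Omega f(|x-T(x)|)\,d\mu^*(x)$, where the infimum is over measurable maps $T:\Omega\to\{x_1,\dots,x_N\}$ with $T_\#\mu^*=\sum_i w_i\delta_{x_i}$. Such a $T$ is equivalent to specifying a measurable partition $\{A_i\}_{i=1}^N$ of $\Omega$ (up to $\mu^*$-null sets) with $A_i=T^{-1}(x_i)$ and $\mu^*(A_i)=w_i$, transforming the cost into $\sum_{i=1}^N\int_{A_i}f(|x-x_i|)\,d\mu^*(x)$. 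Since the outer minimization over $\mathbf{w}\in\Delta^{N-1}$ lets the weights be freely chosen, the joint problem reduces to minimizing this sum over all measurable partitions of $\Omega$, with $w_i=\mu^*(A_i)$ then determined a posteriori.

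For the lower bound, I would apply the pointwise inequality $f(|x-x_i|)\ge\min_j f(|x-x_j|)$ valid on each $A_i$, yielding
\begin{align*}
\sum_{i=1}^N\int_{A_i}f(|x-x_i|)\,d\mu^*(x)\;\ge\;\int_\Omega\min_j f(|x-x_j|)\,d\mu^*(x)\;=\;\mathcal{H}_f(\mathbf{x}),
\end{align*}
which holds for every feasible partition. For the matching upper bound, I would explicitly choose $A_i=\mathcal{V}_i$. Because $f$ is non-decreasing, on $\mathcal{V}_i$ one has $f(|x-x_i|)=\min_j f(|x-x_j|)$, so equality is attained in the above display, and the induced weights are $w_i^\star=\mu^*(\mathcal{V}_i)$, which indeed lie in $\Delta^{N-1}$ since $\{\mathcal{V}_i\}$ covers $\Omega$.

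The main delicacy lies at the Voronoi boundaries, where the $\arg\min$ over $i$ is not unique and an assignment map $T$ must disambiguate. This is handled by noting that, under absolute continuity of $\mu^*$ (which holds since $\mu^*=\rho^*\,\mathrm{dvol}$), the set of equidistant points has Lebesgue---and hence $\mu^*$---measure zero, so $T$ and the partition $\{\mathcal{V}_i\}$ are well defined $\mu^*$-almost everywhere and the weights $w_i^\star=\mu^*(\mathcal{V}_i)$ are unambiguous. A minor additional point is to verify that the symmetric rewriting of $C_f$ is legitimate when the source is discrete; this is standard via passage to the Kantorovich formulation (any plan between $\sum_i w_i\delta_{x_i}$ and $\mu^*$ disintegrates along the discrete marginal into sub-measures $\eta_i$ with $\eta_i(\Omega)=w_i$ and $\sum_i\eta_i=\mu^*$), after which the same pointwise argument closes the proof.
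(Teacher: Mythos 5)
Your proposal is correct and follows essentially the same route as the paper's proof: reinterpret the transport from $\mu^\star$ to the discrete measure as a measurable partition $\{A_i\}$ of $\Omega$ with $\mu^\star(A_i)=w_i$, lower-bound the cost $\sum_i\int_{A_i}f(|x-x_i|)\,d\mu^\star$ by $\int_\Omega\min_j f(|x-x_j|)\,d\mu^\star$, and attain equality with the Voronoi partition (using that $f$ is non-decreasing), which forces $w_i^\star=\mu^\star(\mathcal{V}_i)$. Your explicit treatment of the direction-of-pushforward issue and of the $\mu^\star$-null Voronoi boundaries is a slight refinement of the paper's argument, but the substance is the same.
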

%
%
The following corollary applies Proposition~\ref{prop:aggregate_objective_OT_cost}
to the special case of $f(x) = x^2$:
\begin{corollary}[\bf \emph{$L^2$-Wasserstein distance as aggregate objective function}]
  Applying Proposition~\ref{prop:aggregate_objective_OT_cost} with a
  quadratic cost $f(x) = x^2$ (and the corresponding aggregate
  objective function $\mathcal{H}_2$), we have:
\begin{align*}
  \mathcal{H}_2(\mathbf{x}) = 
  W_2^2 \left( \sum_{i=1}^N \mu^\star(\mathcal{V}_i) \delta_{x_i},
    \mu^\star \right).
\end{align*}
\end{corollary}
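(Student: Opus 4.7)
The plan is to obtain this corollary as a direct specialization of Proposition~\ref{prop:aggregate_objective_OT_cost}, so essentially no new machinery is needed. First, I would recall the observation made immediately after equation~\eqref{eq:OT_cost_f} in the paper: when $f(x) = x^2$, the optimal transport cost $C_f$ coincides with the squared $L^2$-Wasserstein distance $W_2^2$. In particular, for any probability measures $\mu, \nu \in \mathcal{P}(\Omega)$,
\begin{equation*}
  C_f(\mu, \nu) \;=\; W_2^2(\mu, \nu).
\end{equation*}

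Next, I would apply Proposition~\ref{prop:aggregate_objective_OT_cost} to $\mathcal{H}_2 \equiv \mathcal{H}_f$ for $f(x)=x^2$. The proposition gives
\begin{equation*}
  \mathcal{H}_2(\mathbf{x}) \;=\; \min_{\mathbf{w} \in \Delta^{N-1}} W_2^2\!\left(\sum_{i=1}^N w_i \delta_{x_i},\,\mu^\star\right),
\end{equation*}
together with the explicit characterization of the minimizing weights as $w_i^\star = \mu^\star(\mathcal{V}_i)$, where $\{\mathcal{V}_i\}_{i=1}^N$ is the Voronoi partition of $\Omega$ generated by $\mathbf{x}$. Substituting these optimal weights into the right-hand side yields exactly the claimed identity.

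Since the corollary is a direct consequence of a stated proposition specialized to the quadratic case, there is no substantive obstacle to overcome here; the technical content (the Voronoi characterization of the optimal weights and the duality with the Monge--Kantorovich formulation) has already been absorbed into Proposition~\ref{prop:aggregate_objective_OT_cost}. The only care needed is to verify that the correspondence $C_f \equiv W_2^2$ indeed holds under $f(x)=x^2$, which is immediate from the Monge formulation~\eqref{eq:Monge_Formulation_W2} once one notes that the infimum over transport maps in~\eqref{eq:OT_cost_f} becomes the Monge problem for $W_2^2$ (and, for the discrete source $\sum_i w_i \delta_{x_i}$, equals the Kantorovich value since both formulations achieve the same infimum).
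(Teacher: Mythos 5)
Your proposal is correct and follows exactly the paper's own route: the corollary is obtained by specializing Proposition~\ref{prop:aggregate_objective_OT_cost} to $f(x)=x^2$, invoking the identification $C_f \equiv W_2^2$ noted after~\eqref{eq:OT_cost_f}, and substituting the optimal weights $w_i^\star = \mu^\star(\mathcal{V}_i)$. Nothing further is needed.
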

We now investigate the properties of the aggregate objective function
$\mathcal{H}_f$ in the limit $N \rightarrow \infty$.  

\begin{lemma} \label{lemma:conv_agg_obj_func}
	Let $\mu^\star\in \mathcal{P}(\Omega)$ be
    an absolutely continuous measure defining $\mathcal{H}_f$.
    Let $x_i \sim_{i.i.d} \mu$, for $i \in \until{N}$, where $\mu \in
    \mathcal{P}(\Omega)$ is any absolutely continuous probability
    measure such that $\supp(\mu) \supseteq \supp(\mu^\star)$.  It
    holds almost surely that $\lim_{N \rightarrow \infty} \mathcal{H}_f(\mathbf{x})
    = 0$. 
\end{lemma}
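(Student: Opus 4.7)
The key simplification is that, since $f$ is non-decreasing and the minimum commutes with a monotone function, I can rewrite
\begin{align*}
\mathcal{H}_f(\mathbf{x}) = \int_\Omega f\bigl(d_N(x)\bigr) \, d\mu^\star(x),
\quad d_N(x) := \min_{1 \le i \le N} |x - x_i|.
\end{align*}
Because $\Omega$ is compact, $d_N(x) \le \operatorname{diam}(\Omega)$, so $f(d_N(x)) \le f(\operatorname{diam}(\Omega))$, which is a finite constant. The plan is then to show that $d_N(x) \to 0$ for $\mu^\star$-a.e. $x$, almost surely in the samples $(x_i)_{i \ge 1}$, and conclude by dominated convergence.

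For the pointwise convergence, fix $x \in \supp(\mu^\star) \subseteq \supp(\mu)$. For every $\epsilon > 0$, the definition of support gives $p_\epsilon(x) := \mu\bigl(B_\epsilon(x) \cap \Omega\bigr) > 0$. Since the $x_i$ are i.i.d.~$\mu$, the events $\{x_i \notin B_\epsilon(x)\}$ are independent with probability $1 - p_\epsilon(x) < 1$, so
\begin{align*}
\Pr\bigl( d_N(x) \ge \epsilon \bigr) = \bigl(1 - p_\epsilon(x)\bigr)^N \to 0
\quad \text{as } N \to \infty.
\end{align*}
Since $N \mapsto d_N(x)$ is monotonically non-increasing, this in fact yields $d_N(x) \to 0$ almost surely for this fixed $x$.

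To convert the pointwise (in $x$) a.s.~statement into an a.s.~statement about $\mu^\star$-a.e.~$x$ simultaneously, I invoke Fubini. The function $(x, \omega) \mapsto \phi(x,\omega) := \lim_N d_N(x;\omega)$ is jointly measurable (as a monotone limit of measurable functions). The previous step gives $\Pr(\phi(x,\cdot) > 0) = 0$ for every $x \in \supp(\mu^\star)$, and $\mu^\star(\Omega \setminus \supp(\mu^\star)) = 0$. Therefore
\begin{align*}
(\mu^\star \otimes \Pr)\bigl(\{(x,\omega) : \phi(x,\omega) > 0\}\bigr) = 0,
\end{align*}
so by Fubini, for $\Pr$-a.e.~$\omega$ we have $d_N(x;\omega) \to 0$ for $\mu^\star$-a.e.~$x$.

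Finally, since $f$ is continuous at $0$ with $f(0)=0$, $f(d_N(x)) \to 0$ for $\mu^\star$-a.e.~$x$ on the above full-measure event, and the bound $f(d_N(x)) \le f(\operatorname{diam}(\Omega))$ provides an integrable dominating function (the probability measure $\mu^\star$ has total mass one). Lebesgue's dominated convergence theorem then yields $\mathcal{H}_f(\mathbf{x}) \to 0$ almost surely. The only non-routine step is the Fubini swap that synchronizes the null set in $\omega$ across $\mu^\star$-almost all $x$; everything else reduces to compactness of $\Omega$, the support assumption, and the monotonicity/continuity of $f$.
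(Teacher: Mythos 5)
Your proof is correct, but it takes a genuinely different route from the paper's. The paper first invokes Proposition~\ref{prop:aggregate_objective_OT_cost} to rewrite $\mathcal{H}_f(\mathbf{x}) = C_f\bigl(\sum_{i=1}^N \mu^\star(\mathcal{V}_i)\,\delta_{x_i},\,\mu^\star\bigr)$, then argues that the Voronoi-weighted empirical measure converges weakly almost surely to $\mu^\star$ (a Glivenko--Cantelli-type step), and concludes by continuity of $C_f$ in the weak topology together with $C_f(\mu^\star,\mu^\star)=0$. You bypass the optimal-transport reformulation entirely: using monotonicity of $f$ to pull the minimum inside, you reduce the claim to showing that the nearest-sample distance $d_N(x)$ vanishes, which you handle by the elementary estimate $\Pr(d_N(x)\ge\epsilon)\le(1-\mu(B_\epsilon(x)))^N\to 0$, the monotonicity of $N\mapsto d_N(x)$, a Fubini argument to synchronize the exceptional null sets, and dominated convergence with the constant bound $f(\operatorname{diam}(\Omega))$. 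What your approach buys is self-containedness and rigor at exactly the points the paper glosses over: the weak convergence of the Voronoi-weighted empirical measure is not a direct consequence of Glivenko--Cantelli (the weights are not $1/N$) and in fact hides the same shrinking-Voronoi-cell/nearest-neighbor argument you make explicit, and the ``continuity of $C_f$'' under weak convergence is asserted without proof. What the paper's route buys is conceptual alignment with the rest of Section~\ref{sec:case_study_coverage_control}, where $\mathcal{H}_f$ is systematically viewed as an optimal transport cost. Two minor points to tidy: the identity $\Pr(d_N(x)\ge\epsilon)=(1-p_\epsilon(x))^N$ should be an inequality (or use an open ball) depending on the ball convention, which is immaterial; and the continuity of $f$ at $0$ that you invoke does hold here, since a finite convex non-decreasing function on $\realnonnegative$ is automatically right-continuous at the origin, and $f(0)=0$ is part of the paper's standing assumptions on $f$.
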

The previous result holds for any configuration of the points
$\lbrace x_i \rbrace_{i=1}^N$ as long as they are sampled from a
distribution whose support contains that of $\mu^\star$. Note that
this is consistent with what happens in the discrete particle case, in
the coverage control problem. In this case, critical point
configurations are given by the so-called centroidal Voronoi
configurations~\cite{JC-SM-TK-FB:02-tra}.  However, as the number of
agents goes to infinity, any configuration of points asymptotically
become centroids of their Voronoi regions. Thus, those positions
correspond to local optimizers of the discrete coverage control
problem. In this way, while the empirical measure $\frac{1}{N}
\sum_{i=1}^N \delta_{x_i}$ corresponding to the points $\lbrace x_i
\rbrace_{i=1}^N$ samples from $\mu$ converges uniformly almost surely
to $\mu$ (Glivenko-Cantelli theorem), the quantization energy
$\mathcal{H}_f$, converges to zero, which does not really reflect the
discrepancy between the measures $\mu$ and $\mu^\star$.  Thus, the
functional $\mathcal{H}_f$ suffers from this deficiency as a candidate
aggregate function for coverage control in the large scale limit.

Consider instead the following aggregate objective function:
\begin{align}
\label{eq:balanced_coverage_objective_function}
\bar{\mathcal{H}}_f(\mathbf{x}) = C_f \left( \frac{1}{N} \sum_{i=1}^N
  \delta_{x_i}~,~ \mu^\star \right).
\end{align}
This performance metric has been used before in the so-called area
(weight)-constrained coverage control problem~\cite{JC:08-tac} (the
weights $w_i = 1/N$ are balanced in the case of
\eqref{eq:balanced_coverage_objective_function}).

\begin{lemma} \label{lemma:conv_agg_obj_func_OT}
  Let $\mu^\star\in \mathcal{P}(\Omega)$ be an absolutely continuous
  measure and let $\bar{\mathcal{H}}_f$ be defined as
  in~\eqref{eq:balanced_coverage_objective_function}.  Let $x_i
  \sim_{i.i.d} \mu$, for $i \in \until{N}$, where $\mu \in
  \mathcal{P}(\Omega)$ is any absolutely continuous probability
  measure.  It holds almost surely that $\lim_{N \rightarrow \infty}
  \bar{\mathcal{H}}_f(\mathbf{x}) = C_f(\mu,\mu^\star)$.
\end{lemma}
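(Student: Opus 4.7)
The plan is to reduce the claim to two well-established facts: (i) weak convergence of the empirical measure to the sampling measure, and (ii) continuity of the optimal transport cost $C_f$ with respect to weak convergence in its first argument. Once both are in hand, the conclusion follows immediately by composition.

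First, since $x_i \sim_{\mathrm{i.i.d.}} \mu$ and $\Omega$ is compact, the Glivenko--Cantelli theorem (already invoked in Section~\ref{sec:coverage_optimization}) implies that the empirical measure $\widehat{\mu}^N_{\mathbf{x}} = \frac{1}{N}\sum_{i=1}^N \delta_{x_i}$ converges weakly to $\mu$ almost surely as $N \to \infty$. Equivalently, by the metrization of weak convergence by $W_2$ on the compact space $\mathcal{P}(\Omega)$ recalled in Section~\ref{sec:preliminaries}, we have $W_2(\widehat{\mu}^N_{\mathbf{x}}, \mu) \to 0$ almost surely.

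Second, I would show that the map $\nu \mapsto C_f(\nu, \mu^\star)$ is continuous on $(\mathcal{P}(\Omega), W_2)$. The cost $c(x,y) = f(|x-y|)$ is continuous on the compact set $\Omega \times \Omega$ (because $f$ is $l$-smooth, hence continuous, and $|x-y|$ is bounded on $\Omega \times \Omega$), and therefore bounded. The standard stability result for optimal transport with continuous bounded cost on a compact Polish space (see, e.g., Theorem~5.20 in Villani~\cite{CV:08}) states that if $\nu_n \rightharpoonup \nu$ and $\mu_n \rightharpoonup \mu$ weakly, then any sequence of optimal plans $\pi_n \in \Pi(\nu_n, \mu_n)$ is tight and any weak limit is an optimal plan between $\nu$ and $\mu$, so that $C_f(\nu_n, \mu_n) \to C_f(\nu, \mu)$. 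Applying this with $\nu_n = \widehat{\mu}^N_{\mathbf{x}}$ and $\mu_n \equiv \mu^\star$ fixed yields the desired continuity.

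Combining the two facts, along almost every sample path we have $\widehat{\mu}^N_{\mathbf{x}} \rightharpoonup \mu$, hence
\begin{align*}
\lim_{N \to \infty} \bar{\mathcal{H}}_f(\mathbf{x})
= \lim_{N \to \infty} C_f\!\left(\widehat{\mu}^N_{\mathbf{x}}, \mu^\star\right)
= C_f(\mu, \mu^\star),
\end{align*}
which is the claim. The only nontrivial step is the continuity of $C_f$ in its first argument; this is where compactness of $\Omega$ and the continuity of $f$ are essential, because without them the Kantorovich functional is only lower semicontinuous and the limit might strictly exceed $C_f(\mu,\mu^\star)$. Since $f$ is assumed strictly convex, non-decreasing, and $l$-smooth with $f(0)=0$, these hypotheses are met and the main obstacle dissolves.
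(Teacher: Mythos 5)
Your proposal is correct and follows essentially the same route as the paper's proof: almost-sure weak convergence of the empirical measure to $\mu$ (via Glivenko--Cantelli) combined with continuity of $\nu \mapsto C_f(\nu,\mu^\star)$ under weak convergence. In fact you supply the justification for the continuity step (stability of optimal plans for a bounded continuous cost on the compact $\Omega\times\Omega$, conveniently phrased in the Kantorovich relaxation, which also sidesteps the nonexistence of Monge maps from a discrete measure to an absolutely continuous one) that the paper simply asserts, while the paper's additional digression through the generalized Voronoi partition plays no role in the limiting argument.
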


Similarly to~\eqref{eq:prox_grad_descent_omega_N}, we can formulate a
multi-agent proximal descent algorithm on the aggregate objective
function $\bar{\mathcal{H}}_f$, with $f(x) = x^2$, as follows, for
every $i\in \until{N}$:
\begin{align}
  x_i^{+} = \arg \min_{z \in \Omega} \frac{1}{2\tau} |x_i - z|^2 +
  \bar{\mathcal{H}}_f(z, \mathbf{x}_{-i}).
  \label{eq:multi_agent_Voronoi_lloyd_descent}
\end{align} 
Note that this is a proximal formulation of the load-balancing variant
of the Lloyd's algorithm in~\cite{JC:08-tac}.
\begin{theorem}[\bf \emph{Convergence to generalized centroidal  Voronoi configuration
and~$\mu^\star$}]
The Lloyd proximal descent
\eqref{eq:multi_agent_Voronoi_lloyd_descent}, with $f(x) = x^2$,
converges to a local minimizer of $\bar{\mathcal{H}}_f$.
Furthermore, as $N \rightarrow \infty$, 
the proximal descent scheme \eqref{eq:multi_agent_Voronoi_lloyd_descent} 
converges to:
\begin{align}
	x^{+} = \arg \min_{z \in \Omega} \frac{1}{2\tau} |x-z|^2 + \phi_{\mu \rightarrow \mu^*}(z),
	\label{eq:proximal_coverage_scheme}
\end{align}
with $x \sim \mu$ and $\phi_{\mu \rightarrow \mu^*} = \left. \frac{\delta W_2^2(\nu,
\mu^\star)}{\delta \nu} \right|_{\mu}$, the Kantorovich potential for
optimal transport from $\mu$ to $\mu^\star$.
The sequence $\lbrace \mu_k \rbrace_{k \in \mathbb{N}}$
obtained as the transport of an absolutely continuous probability
measure~$\mu_0 \in \mathcal{P}(\Omega)$
by~\eqref{eq:proximal_coverage_scheme} with~$\tau \in (0, \bar{\tau})$
(for some~$\bar{\tau} > 0$), with $x_0 \sim \mu_0$,
converges weakly to $\mu^\star$ as $k \rightarrow \infty$. 
\end{theorem}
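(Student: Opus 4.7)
The plan is to treat the three claims in turn, leveraging the machinery already built in Sections~\ref{sec:transport_models} and~\ref{sec:multi_agent_coverage_control}. I would first identify $\bar{\mathcal{H}}_f$ with $f(x) = x^2$ as the aggregate objective $F^{h,N}$ associated with the macroscopic functional $F(\mu) = W_2^2(\mu, \mu^\star)$, whose Fr\'echet derivative at $\mu$ is (up to a factor of $2$) the Kantorovich potential $\phi_{\mu \to \mu^\star}$. Using the fact that $F$ is $l$-smooth and (generalized) geodesically convex on $(\mathcal{P}(\Omega), W_2)$, together with the Lipschitz regularity of $\phi_{\mu \to \mu^\star}$ on the compact convex domain $\Omega$, I would verify the hypotheses needed to invoke Theorem~\ref{thm:convergence_multi_agent_local_min} on $\bar{\mathcal{H}}_f$: $\alpha$-smoothness follows from Lemma~\ref{lemma:alpha_smoothness_F^{h,N}}, and the boundary condition from Assumption~\ref{ass:F^h,N_boundary_condns} is inherited from the zero-flux condition on $\nabla \phi_{\mu \to \mu^\star}$ in Assumption~\ref{ass:regularity}. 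Theorem~\ref{thm:convergence_multi_agent_local_min} then yields convergence of \eqref{eq:multi_agent_Voronoi_lloyd_descent} to a critical point that is not a local maximizer, and to a local minimizer whenever the accumulation point lies in $\Delta_\delta$ for some $\delta > 0$.

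For the many-particle limit, I would apply Lemma~\ref{lemma:transport_many_particle} directly: with $\varphi \equiv \delta F / \delta \nu |_\mu = \phi_{\mu \to \mu^\star}$, the scheme \eqref{eq:multi_agent_Voronoi_lloyd_descent} converges in distribution to \eqref{eq:proximal_coverage_scheme} as $N \to \infty$, with the limit particle $x$ drawn from $\mu$.

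The third claim, weak convergence of the transported measures $\{\mu_k\}$ to $\mu^\star$, is the main obstacle, because $F(\nu) = \tfrac{1}{2} W_2^2(\nu, \mu^\star)$ is strictly (generalized) geodesically convex only with respect to the reference measure $\mu^\star$ and not w.r.t.~arbitrary atomless $\theta$, so Theorem~\ref{thm:proximal_scheme_convergence_target} does not apply off the shelf. To bypass this, I would follow the argument sketched in the Remark on the squared-Wasserstein objective: writing the first-order optimality condition for $\mu_{k+1}$ yields $(\id - T_{\mu_{k+1} \to \mu_k}) = \tau (T_{\mu_{k+1} \to \mu^\star} - \id)$, from which $W_2(\mu_k, \mu_{k+1}) = \tau W_2(\mu_{k+1}, \mu^\star)$. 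Checking the triangle inequality with equality would then show that $\mu_{k+1}$ lies on the geodesic joining $\mu_k$ and $\mu^\star$; by induction, the entire sequence $\{\mu_k\}$ is confined to the geodesic from $\mu_0$ to $\mu^\star$. Along this geodesic, $\mu_k$ and $\mu^\star$ are interchangeable as reference measures, and generalized geodesic convexity of $W_2^2(\cdot, \mu^\star)$ w.r.t.~$\mu^\star$ (Proposition~\ref{prop:strictly_convex_C_f} in the Appendix) suffices.

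With this restriction in hand, the proof strategy of Theorem~\ref{thm:conv_prox_recursion_wasserstein} (and Theorem~\ref{thm:proximal_scheme_convergence_target}) can be replayed: the sequence $\{F(\mu_k)\}$ is monotonically decreasing, $\{\mu_k\}$ is contained in the compact sublevel set $\mathcal{S}(\mu_0)$, every weak accumulation point is a fixed point of the proximal map, and the first-order condition at a fixed point $\bar{\mu}$ reduces to $\nabla \phi_{\bar{\mu} \to \mu^\star} = 0$, forcing $\bar{\mu} = \mu^\star$ by the (restricted) strict convexity. The bound $\bar\tau$ on the admissible step size arises from the $l$-smoothness and Lipschitz constant of the Kantorovich potential $\phi_{\mu \to \mu^\star}$ along this geodesic, exactly as in the definition $\alpha = l + \lambda$ used in Theorem~\ref{thm:proximal_scheme_convergence_target}. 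This yields weak convergence $\mu_k \to \mu^\star$ as $k \to \infty$.
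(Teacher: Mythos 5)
Your proposal is correct and follows essentially the same route as the paper: $\alpha$-smoothness plus the comparison lemma to invoke Theorem~\ref{thm:convergence_multi_agent_local_min}, Lemma~\ref{lemma:transport_many_particle} for the $N \rightarrow \infty$ limit, and the geodesic-confinement argument from the squared-Wasserstein remark combined with Theorem~\ref{thm:proximal_scheme_convergence_target} for weak convergence to $\mu^\star$. The only minor discrepancy is that you write the optimality condition in the non-lagged form $(\id - T_{\mu_{k+1}\rightarrow\mu_k}) = \tau(T_{\mu_{k+1}\rightarrow\mu^\star} - \id)$, whereas the scheme \eqref{eq:proximal_coverage_scheme} corresponds to the lagged linearization with condition $\frac{1}{\tau}\nabla\phi_{\mu_{k+1}\rightarrow\mu_k} + \nabla\phi_{\mu_k\rightarrow\mu^\star} = 0$; both yield the same geodesic confinement and hence the same conclusion.
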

\begin{proof}
  Let $\widehat{\mu}^{h,N}_{\mathbf{x}}$ be defined as
  in~\eqref{eq:density_kernel} with a kernel satisfying
  Assumption~\ref{ass:kernel_properties}.  We see that
  $C_f(\widehat{\mu}^{h,N}_{\mathbf{x}}, \mu^\star)$ as a function
  of~$\mathbf{x}$ is $\alpha$-smooth for some $\alpha > 0$ (from
  Proposition~\ref{prop:l_smooth_C_f} in
  Appendix~\ref{app:agg_obj_func}
  and an application of
  Lemma~\ref{lemma:alpha_smoothness_F^{h,N}}).  Further, we note that
  $\bar{\mathcal{H}}_f(\mathbf{x}) = \lim_{h \rightarrow 0}
  C_f(\widehat{\mu}^{h,N}_{\mathbf{x}}, \mu^\star)$ and the
  $\alpha$-smoothness property carries over to the limit,
  as well as the comparison Lemma~\ref{lemma:comparison_cyclically_monotone} for
    $\bar{\mathcal{H}}_f(\mathbf{x})$.
  The convergence of~\eqref{eq:multi_agent_Voronoi_lloyd_descent} with
  $f(x) = x^2$ to a local minimizer of $\bar{\mathcal{H}}_f$ then
  follows from a similar version of Theorem~\ref{thm:convergence_multi_agent_local_min} 
  applied to $\bar{\mathcal{H}}_f(\mathbf{x})$. It is easy to see
    that these local minima correspond to generalized centroidal
    Voronoi configurations as in~\cite{JC:08-tac}.
    
Following a similar reasoning as in
Lemma~\ref{lemma:transport_many_particle}
for $F=C_f$ and $F^{h,N} = C_f^{h,N}$, we have that, as $N \rightarrow \infty$, 
the proximal descent scheme \eqref{eq:multi_agent_Voronoi_lloyd_descent} converges to
\eqref{eq:proximal_coverage_scheme}.

From Theorem~\ref{thm:target_dynamics_euclidean}, it follows that
\eqref{eq:proximal_coverage_scheme}
corresponds to the following transport
in~$\mathcal{P}(\Omega)$:
\begin{align*}
	\mu_{k+1} = \arg \min_{\nu \in \mathcal{P}(\Omega)}~ \frac{1}{2\tau} W_2^2(\mu_k, \nu) + \mathbb{E}_{\nu} \left[ \phi_{\mu_k \rightarrow \mu^*}  \right].
\end{align*}
We have that $W_2^2(\cdot, \mu^\star)$ is
strictly (generalized) geodesically convex, 
$l$-smooth w.r.t. the reference measure~$\mu^*$
and its Fr{\'e}chet derivative~$\nabla \phi_{\mu \rightarrow \mu^*}$ is Lipschitz continuous with Lipschitz constant~$\lambda$
(by an application of Propositions~\ref{prop:strictly_convex_C_f},~\ref{prop:l_smooth_C_f} 
and~\ref{prop:lipschitz_frechet_derivative} in Appendix~\ref{app:agg_obj_func}).
Furthermore, for the critical point of the objective function above,
we have:
\begin{align*}
	\frac{1}{\tau} \nabla \phi_{\mu_{k+1} \rightarrow \mu_k} + \nabla \phi_{\mu_k \rightarrow \mu^*} = 0,
\end{align*}
from which it follows that $\mu_{k+1}$ lies on the geodesic from~$\mu_k$ to~$\mu^*$.
From the above and Theorem~\ref{thm:proximal_scheme_convergence_target},
we get that there exists a $\bar{\tau} > 0$ such that for any $\tau \in (0, \bar{\tau})$,
the sequence $\lbrace \mu_k \rbrace_{k \in \mathbb{N}}$ obtained as the transport
of an absolutely continuous probability measure~$\mu_0 \in
\mathcal{P}(\Omega)$ by~\eqref{eq:proximal_coverage_scheme}
converges weakly to $\mu^\star$. \oprocend
\end{proof}
\noindent It is known that the generalized Lloyd's algorithm
results in convergence to generalized centroidal  Voronoi configurations
\cite{JC:08-tac}, where the generators $\lbrace x_1, \ldots, x_N
\rbrace$ of the generalized Voronoi partition are also the centroids
of their respective generalized Voronoi cells.  The generalized centroidal
 Voronoi configuration is, however, not unique, and this
relates to the fact that the convergence is to the local minimizers of
$\bar{\mathcal{H}}_f$, which is typically nonconvex.

\subsection{Numerical experiments}
We now present results from numerical experiments for the coverage
control algorithm~\eqref{eq:multi_agent_Voronoi_lloyd_descent} for the
objective function $\bar{\mathcal{H}}_f$, with $f(x) = x^2$.  We first
sample i.i.d. from a multimodal Gaussian distribution and normalize
the histogram of the samples over a discretization of the spatial
domain to obtain a (quantized) target distribution over the domain.
We then implement the coverage control
algorithm~\eqref{eq:multi_agent_Voronoi_lloyd_descent} for various
sizes~$N$ of the multi-agent system, from random initializations of
the agent positions.
We present the following: (i) The steady state distribution of agents
(in Figure~\ref{fig:agent_dist}). We observe that the distribution of the agents more closely
approximates the target distribution as the size~$N$ of the system increases. 
(ii) The value of the coverage objective function as a function of time (in
Figure~\ref{fig:error_vs_T_N}), for various sizes~$N$ of the
multi-agent system. We observe that the steady state value decreases 
with the size~$N$ of the system, in accordance with our theoretical results.

\begin{figure*}
\begin{center}
        \includegraphics[width=0.24\textwidth]{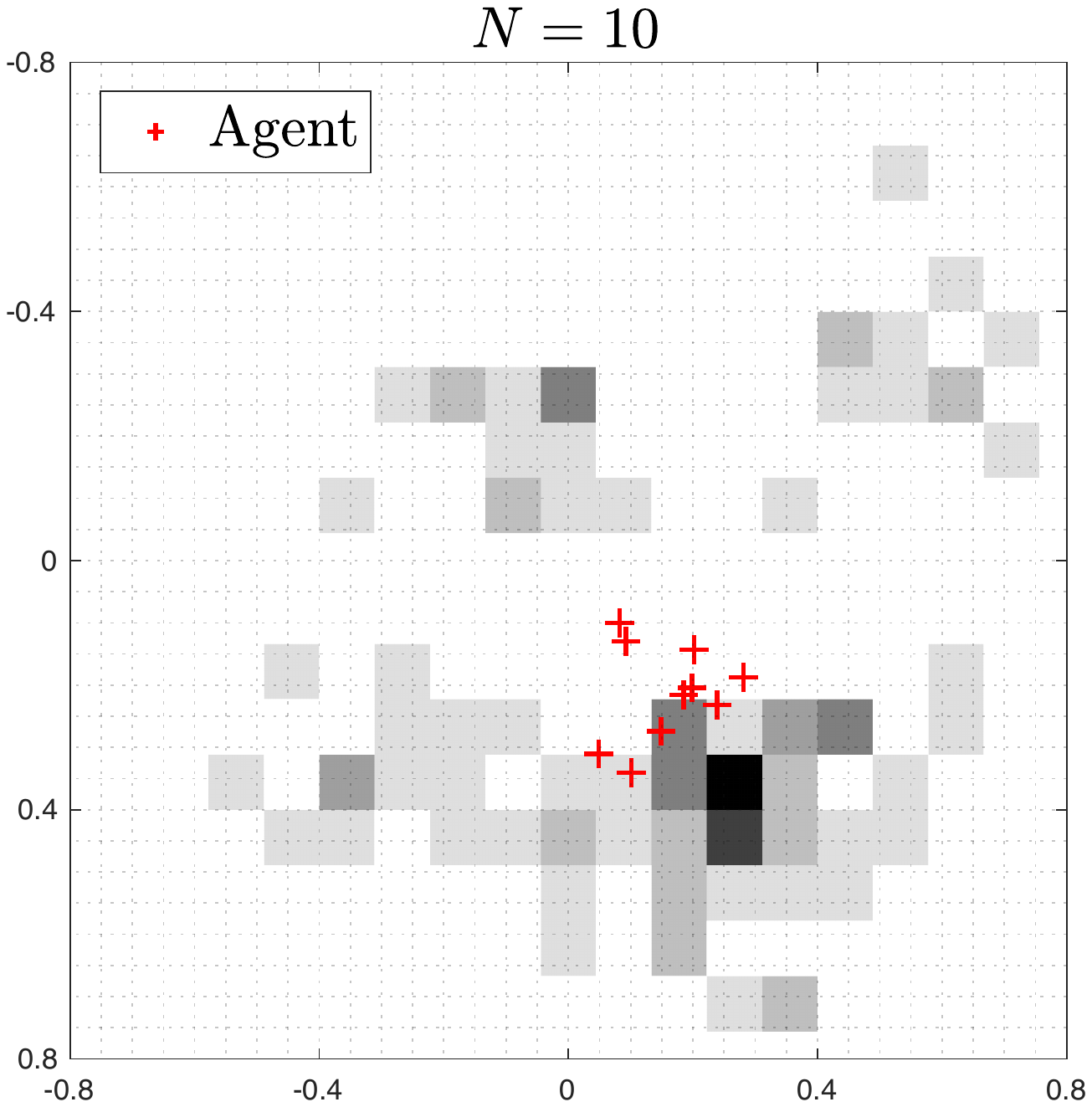}
                \includegraphics[width=0.24\textwidth]{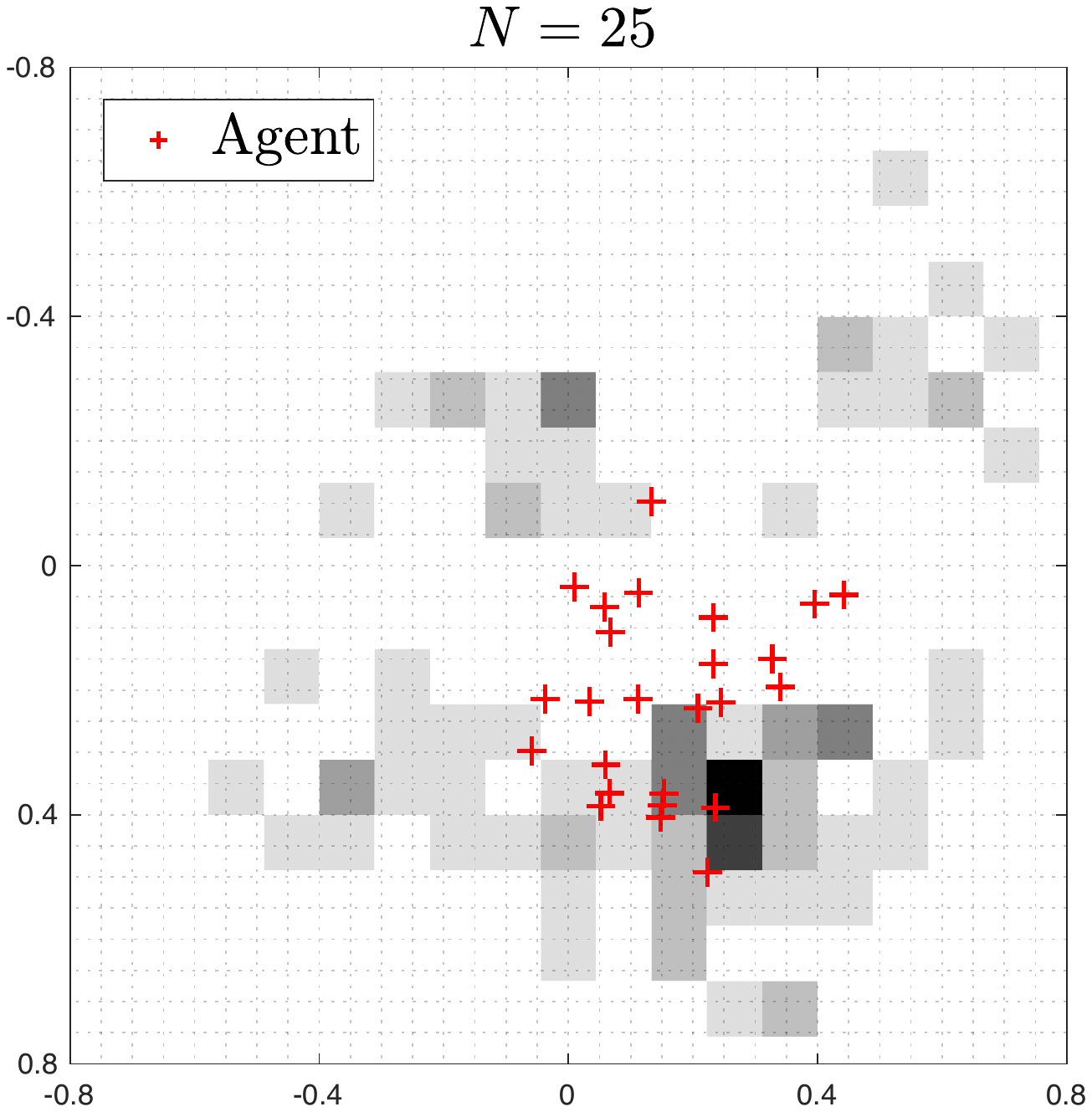}
                        \includegraphics[width=0.24\textwidth]{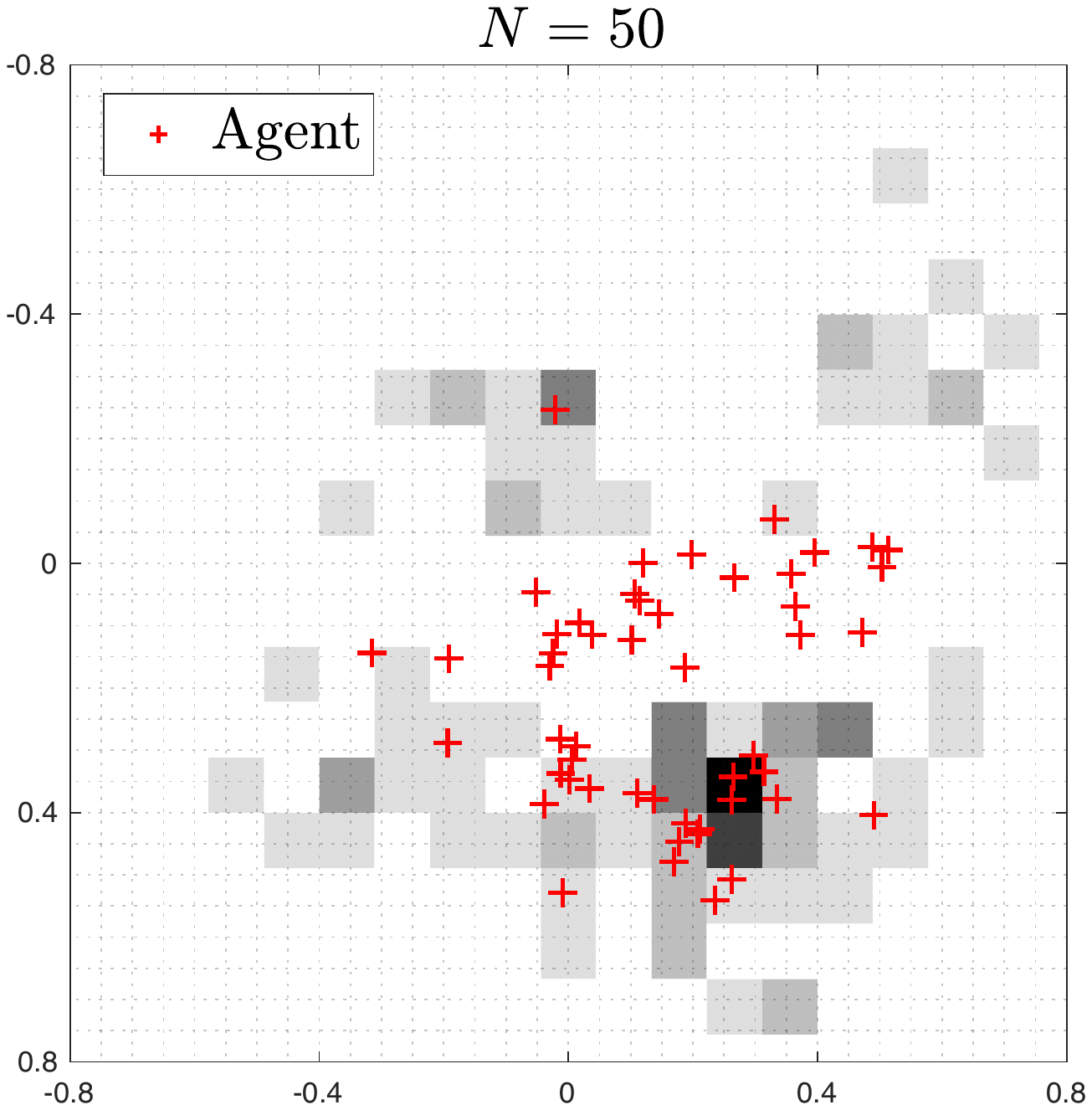}
                                \includegraphics[width=0.24\textwidth]{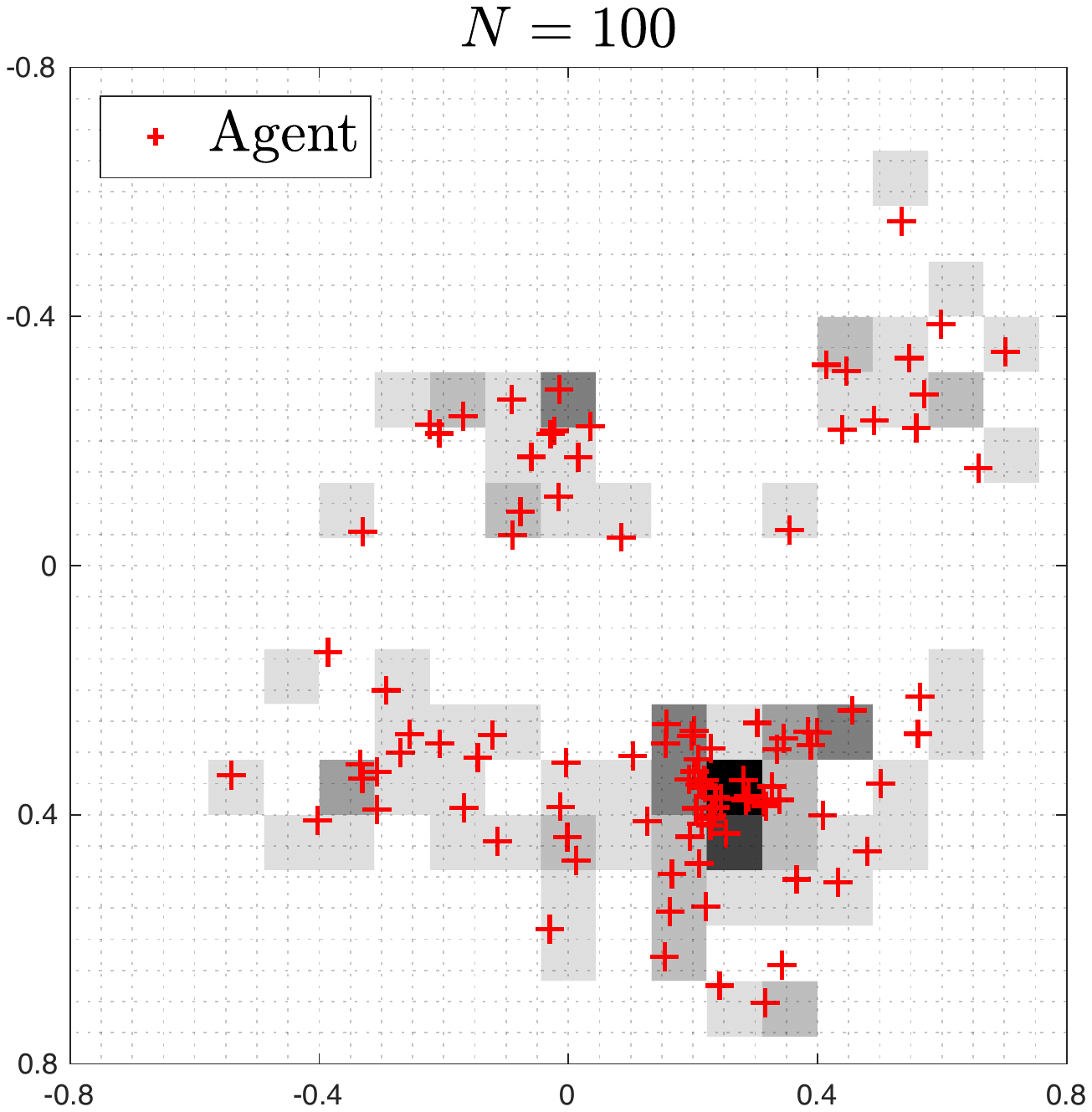}
\end{center}
\caption{The figure shows the steady state distribution of the agents
  implementing the coverage
  algorithm~\eqref{eq:multi_agent_Voronoi_lloyd_descent} with the
  target distribution depicted in grayscale, for $N = 10, 25, 50,
  100$. We observe that the distribution of the agents more closely
  approximates the target distribution as the size~$N$ of the system
  increases.}
		\label{fig:agent_dist}
\end{figure*}
\begin{figure}
        \includegraphics[width=0.48\textwidth]{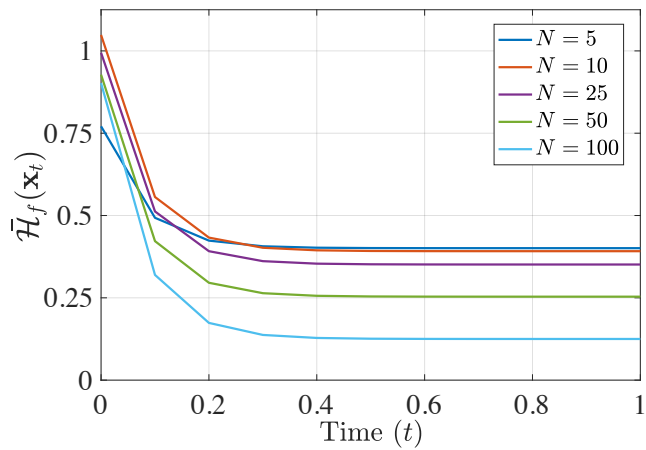}
        \caption{The figure is a representative plot of the value of the aggregate objective function 
        				$\bar{\mathcal{H}}_f(\mathbf{x}_t)$ (with $f(x) = x^2$) vs. time~$t$
        				for various sizes~$N$ of the multi-agent system and random initializations
        				of agent positions. We observe that the steady state value decreases 
        				with the size~$N$ of the system, in accordance with our theoretical results.}
		\label{fig:error_vs_T_N}
\end{figure}


%
\section{Conclusion}
\label{sec:conclusions}
In this paper, we have introduced a multiscale framework for
the analysis and design of 
multi-agent coverage algorithms that begins with 
a macroscopic specification of the target coverage behavior 
to derive provably-correct microscopic, agent-level algorithms 
that achieve the target macroscopic specification. 
Our class of macroscopic proximal descent schemes exploit 
convexity properties of coverage objective functionals to steer
the macroscopic configuration, 
which are then translated into agent-level algorithms via a variational
discretization. 
We uncover the relationship with previously studied coverage
algorithms, and obtain insights into the large-scale behavior of 
these algorithms. 
Future work will consider the extension to a constrained optimization
framework to include such constraints as sensing limitations, 
dynamic and collision-avoidance constraints.
We have assumed in this paper that the underlying spatial domain $\Omega$ is convex. 
This assumption results in the convexity of the space of probability measures. 
The convexity of the space of probability measures (which is the search space in our proximal descent schemes) 
is necessary for the minimization problems we obtain to be convex. 
The presence of obstacles in the mission space is likely to violate this assumption 
and lead to non-convexity of the corresponding minimization problems. 
Overcoming this difficulty is a consideration for future work.

\bibliographystyle{plain}
\bibliography{alias,SMD-add,SM,JC,FB}

\begin{appendices}
 \section{Additional preliminaries}
 We present here the mathematical preliminaries on
 convergence of measures, the $L^2$-Wasserstein space and
 smoothness and convexity notions for functions defined on the 
 $L^2$-Wasserstein space. 

 \subsection{Convexity of functions}
 \label{subsec:functionconvexity}
 Recall that a set $\Omega \subseteq \real^d$ is \textit{convex} if for
 $x,y \in \Omega$, we have $(1 - t) x + t y \in \Omega$ for all $t \in
 [0,1]$.
 A function $f : \Omega \rightarrow \real$ is \textit{convex} if
 $\Omega$ is convex and $f((1 - t) x + t y) \leq (1 - t) f(x) + t f(y)$
 for all $t \in [0,1]$.
 A function $f : \Omega \rightarrow \real$ is \textit{$m$-strongly
   convex} if $\Omega$ is convex and $f((1 - t) x + t y) \leq (1 - t)
 f(x) + t f(y) - \frac{1}{2} m t(1-t) \| x - y \|^2$ for all $t \in
 [0,1]$ and some $m >0$.
 \subsection{The space of probability measures and its 
   topology}
 Let $\Omega = \bar{D}$, with $D \subset \real^d$ an open, bounded set
 in the $d$-dimensional Euclidean space $\real^d$.  Let
 $\mathcal{B}(\Omega)$ be the Borel $\sigma$-algebra in $\Omega$, which
 is the collection of measurable sets w.r.t.~Borel measures.  The space
 of probability measures, $\mathcal{P}(\Omega)$, is the collection of
 functions $\mu:\mathcal{B}(\Omega) \rightarrow [0,1]$ satisfying the
 following properties: (a) $\mu \left( \emptyset \right) = 0$, (b) $\mu
 \left( \Omega \right) = 1$, and (c) (sub-additivity) $\mu\left(
   \cup_{i \in \mathbb{N}} A_i \right) = \sum_{i} \mu(A_i)$, for a
 countable family of pairwise disjoint sets $\{A_i\,|\, A_i \in
 \mathcal{B}(\Omega)\}$.
 %
 %
 We denote by $\mathcal{P}^r(\Omega) \subset \mathcal{P}(\Omega)$ the
 space of atomless probability measures, where a measure $\mu \in
 \mathcal{P}(\Omega)$ is said to be \emph{atomless} if for any $A \in
 \mathcal{B}(\Omega)$ with $\mu(A) > 0$, there exists $B \in
 \mathcal{B}(\Omega)$, $B \subset A$, such that $\mu(A) > \mu(B) >
 0$. It follows that for an atomless measure $\mu$, we will have
 $\mu(\lbrace x \rbrace) = 0$ for all $x \in \Omega$. We consider this
 a notion of regularity of probability measures, and hence the use of
 the superscript $r$ in $\mathcal{P}^r(\Omega)$. We refer to~\cite{PB:13} 
 for other basic definitions in measure theory. Finally, we recall the following:

 \begin{definition}[\bf \emph{Pushforward measure}]
   Let $\Omega, \Theta $ be (Borel) measurable spaces, a measurable
   mapping~$\mathcal{T} : \Omega \rightarrow \Theta$ and consider a measure~$\mu
   \in \mathcal{P}(\Omega)$. The \emph{pushforward measure}~$\nu =
   \mathcal{T}_{\#} \mu \in \mathcal{P}(\Theta)$ of~$\mu$ is defined as
   $\nu(B) = \mathcal{T}_{\#} \mu (B) = \mu (\mathcal{T}^{-1}(B))$, for
   all Borel measurable $B \subseteq \Theta$.
 \end{definition}

 \subsection{Weak convergence of measures}
 The results of this manuscript rely on the notions of weak convergence
 in $\mathcal{P}(\Omega)$, the topology of weak convergence, its
 metrizability, and the compactness of sets of
 $\mathcal{P}(\Omega)$. We recall them here and refer the reader to~\cite{PB:13}
 for more information.
 \begin{definition}[\bf \emph{Weak convergence}]
   Let $\Omega \subseteq \real^d$, and $\mathcal{P}(\Omega)$ be its set
   of probability measures. A sequence $\lbrace \mu_k \rbrace_{k \in
     \mathbb{N}} \subseteq \mathcal{P}(\Omega)$ \emph{converges weakly}
   to $\mu \in \mathcal{P}(\Omega)$ if for any bounded and continuous
   function $f$ on $\Omega$, $\lim_{k \rightarrow \infty}
   \int_{\Omega} f d\mu_k = \int_{\Omega} f d\mu$.
 \end{definition}
 Equivalently, in the definition above, the sequence $\lbrace \mu_k
 \rbrace_{k \in \mathbb{N}}$ in $\mathcal{P}(\Omega)$ is said to
 \emph{converge to $\mu$ in $\mathcal{P}(\Omega)$ equipped with the
   topology of weak convergence}.  The
 space of probability measures $\mathcal{P}(\Omega)$ equipped with the
 topology of weak convergence is \emph{metrizable}~\cite{PB:13}. In
 other words, there exists a metric on $\mathcal{P}(\Omega)$ such that
 the topology of weak convergence is obtained as the topology induced
 by the metric. One such metric is the Wasserstein distance, see
 Section~\ref{sec:W2}.
 We now state Prokhorov's theorem~\cite{PB:13} on the equivalence between
 tightness and precompactness of a collection of probability
 measures over a separable and complete metric (Polish) space.
 \begin{lemma}[\bf \emph{Prokhorov's theorem}]
 \label{lemma:Prokhorov}
 Let $\Omega$ be a complete metric space, and let $\mathcal{K}
 \subseteq \mathcal{P}(\Omega)$. 
 The closure of $\mathcal{K}$ w.r.t.~the topology of weak convergence
 in $\mathcal{P}(\Omega)$ is \emph{compact} if and only if
 $\mathcal{K}$ is tight. That is, $\mathcal{K}$ is \emph{tight} if
 for any $\epsilon > 0$ there exists a compact $K_{\epsilon} \subseteq
 \Omega$ such that $\mu(K_{\epsilon}) > 1-\epsilon$, for all $\mu \in
 \mathcal{K}$.
 \end{lemma}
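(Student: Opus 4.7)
The plan is to prove the two implications separately, using the fact that a complete metric space supporting a tight family of probability measures can be treated as if it were Polish after restricting attention to the $\sigma$-compact hull of the tightness witnesses.

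For the forward implication (relatively compact $\Rightarrow$ tight), I would fix $\epsilon > 0$ and exploit separability: for each $n \in \naturals$, cover $\Omega$ by countably many open balls $\{B_{1/n}(x_i^n)\}_{i \in \naturals}$. The map $\mu \mapsto \mu\bigl(\bigcup_{i \leq M} B_{1/n}(x_i^n)\bigr)$ is lower semicontinuous in the weak topology, and the finite unions exhaust $\Omega$ in measure as $M \to \infty$. Compactness of $\overline{\mathcal{K}}$ then yields an integer $M_n$ with $\mu\bigl(\bigcup_{i \leq M_n} B_{1/n}(x_i^n)\bigr) > 1 - \epsilon\, 2^{-n}$ uniformly over $\mu \in \mathcal{K}$. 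The intersection $A = \bigcap_n \bigcup_{i \leq M_n} B_{1/n}(x_i^n)$ is totally bounded, hence $K_\epsilon = \overline{A}$ is compact by completeness, and a union bound yields $\mu(K_\epsilon) \geq 1 - \epsilon$ for all $\mu \in \mathcal{K}$.

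For the converse (tight $\Rightarrow$ relatively compact), I would prove sequential compactness; since the weak topology on $\mathcal{P}(\Omega)$ is metrizable (for instance, via the L\'evy--Prokhorov distance), this is equivalent to compactness of $\overline{\mathcal{K}}$. Given any sequence $\{\mu_n\} \subseteq \mathcal{K}$, tightness furnishes nested compact sets $K_m \subseteq \Omega$ with $\mu_n(K_m) > 1 - 1/m$ for all $n,m$. Each $C(K_m)$ is separable, so a Cantor diagonal argument produces a subsequence $\{\mu_{n_j}\}$ along which $\int f\, d\mu_{n_j}$ converges for every $f$ in a countable dense family $\mathcal{D} \subseteq C_c(\bigcup_m K_m)$. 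Extending the limit by the uniform bound $|L(f)| \leq \|f\|_\infty$ gives a positive linear functional of norm at most $1$ on $C_0(\bigcup_m K_m)$, and the Riesz--Markov--Kakutani theorem delivers a Borel measure $\mu$ on $\Omega$. The portmanteau inequality applied to the compact sets $K_m$ gives $\mu(K_m) \geq \limsup_j \mu_{n_j}(K_m) \geq 1 - 1/m$, whence $\mu(\Omega) = 1$ and $\mu \in \mathcal{P}(\Omega)$; a standard portmanteau argument confirms $\mu_{n_j} \to \mu$ weakly.

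The principal obstacle is ensuring that the candidate limit actually has full mass. Without tightness, the diagonal limit functional $L$ would in general correspond only to a \emph{sub}-probability measure: the missing mass represents escape to infinity along sequences leaving every compact subset of $\Omega$. Tightness is exactly the hypothesis needed to preclude this, through the uniform lower bound $\mu_n(K_m) \geq 1 - 1/m$, which survives in the weak limit on the closed sets $K_m$. A secondary technical point is bridging from the representation on $C_0(\bigcup_m K_m)$ to a Borel measure on all of $\Omega$, which is handled by extending by zero outside the $\sigma$-compact support and appealing to outer regularity of Borel measures.
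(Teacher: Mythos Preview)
The paper does not prove this lemma. It is stated in the appendix as a standard result, with a citation to~\cite{PB:13}, and is then immediately invoked to derive Corollary~\ref{corollary:compactness_P_Omega}. So there is no ``paper's own proof'' to compare against; your proposal supplies an argument where the paper simply defers to the literature.

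As for the proposal itself, it is essentially the standard textbook proof and is broadly sound, but two points deserve care. First, in the forward direction you ``exploit separability'' of $\Omega$, yet the lemma as written only hypothesizes completeness; the sentence introducing the lemma in the paper does say Polish, so this is a mismatch in the statement rather than in your reasoning, but you should flag it explicitly rather than silently assume it. Second, in the converse direction your appeal to the portmanteau inequality $\mu(K_m) \geq \limsup_j \mu_{n_j}(K_m)$ is circular as phrased: portmanteau presupposes the weak convergence you are trying to establish. The correct order is to pick $f \in C_c(\bigcup_m K_m)$ with $\mathsf{1}_{K_m} \le f \le 1$, use $\int f\, d\mu = L(f) = \lim_j \int f\, d\mu_{n_j} \ge 1 - 1/m$ to conclude $\mu(\Omega) = 1$, and only then upgrade convergence from $C_c$ to $C_b$ via the tightness bound. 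Your final paragraph shows you understand exactly this mechanism, so this is a matter of ordering the exposition rather than a genuine gap.
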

 \begin{corollary}[\bf \emph{Compactness of $\mathcal{P}(\Omega)$}]
 \label{corollary:compactness_P_Omega}
 Let $\Omega \subseteq \real^d$ a compact set. Then, the closure of
 $\mathcal{P}(\Omega)$ w.r.t.~the topology of weak convergence in
 $\mathcal{P}(\Omega)$ is compact. This follows from
 Prokhorov's theorem in Lemma~\ref{lemma:Prokhorov}, since
 $\mathcal{P}(\Omega)$ is tight: for any $\epsilon > 0$, we choose
 $\Omega$ itself as the compact set and have $\mu(\Omega) = 1 >
 1-\epsilon$ for any $\mu \in \mathcal{P}(\Omega)$. Moreover, since
 $\mathcal{P}(\Omega)$ is also closed w.r.t.~the topology of weak
 convergence, it is therefore compact.
 \end{corollary}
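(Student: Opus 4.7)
The plan is to apply Prokhorov's theorem (Lemma~\ref{lemma:Prokhorov}) in a nearly direct fashion, since the hypothesis that $\Omega$ is compact makes the tightness condition trivial, and then to upgrade the resulting precompactness to compactness by verifying that $\mathcal{P}(\Omega)$ is closed under weak convergence.

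First, I would check that the hypotheses of Lemma~\ref{lemma:Prokhorov} are met: since $\Omega \subseteq \real^d$ is compact, it is a complete (in fact, bounded and closed) metric space under the Euclidean metric, so Prokhorov's theorem is available. Next I would verify tightness of $\mathcal{K} = \mathcal{P}(\Omega)$ by exhibiting a single compact witness for all $\epsilon > 0$, namely $K_\epsilon = \Omega$ itself; then $\mu(K_\epsilon) = \mu(\Omega) = 1 > 1 - \epsilon$ for every $\mu \in \mathcal{P}(\Omega)$, so $\mathcal{P}(\Omega)$ is tight. Prokhorov's theorem then yields that the closure of $\mathcal{P}(\Omega)$ in the weak topology is compact, which is exactly the conclusion.

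To wrap the argument cleanly, I would also remark on closedness, which is what the statement in the corollary emphasizes at the end: if $\{\mu_k\} \subseteq \mathcal{P}(\Omega)$ converges weakly to some Borel measure $\mu$, then testing against the constant function $f \equiv 1$ (bounded and continuous on the compact set $\Omega$) gives $\mu(\Omega) = \lim_k \mu_k(\Omega) = 1$, and nonnegativity is preserved in the limit as well, so $\mu \in \mathcal{P}(\Omega)$. Hence $\mathcal{P}(\Omega)$ equals its own weak closure and is therefore compact.

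There is no real obstacle here: the whole argument is a bookkeeping exercise once Prokhorov's theorem is in hand, and compactness of $\Omega$ trivializes the tightness hypothesis that is usually the delicate part. The only minor care point is to be explicit that the weak limit of probability measures is again a probability measure, which follows from pairing against the continuous bounded test function $f \equiv 1$.
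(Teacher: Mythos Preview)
Your proposal is correct and follows essentially the same route as the paper: apply Prokhorov's theorem with $K_\epsilon = \Omega$ to get tightness (hence precompactness), then invoke closedness of $\mathcal{P}(\Omega)$ under weak convergence to conclude compactness. Your extra remark that the weak limit remains a probability measure (by testing against $f \equiv 1$) is a harmless elaboration of the closedness step the paper asserts without detail.
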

 \subsection{The $L^2$-Wasserstein distance}\label{sec:W2}
 The $L^2$-Wasserstein distance between two probability 
 measures $\mu, \nu \in \mathcal{P}(\Omega)$
 is given by:
 \begin{align}
   W_2^2(\mu, \nu) = \min_{\pi \in \Pi(\mu, \nu)} \int_{\Omega \times
     \Omega} |x-y|^2 ~d\pi(x,y),
 	\label{eq:Kantorovich_Formulation_W2}
 \end{align}
 where $\Pi(\mu, \nu)$ is the space of joint probability measures over
 $\Omega \times \Omega$ with marginals $\mu$ and $\nu$. The definition
 of $L^2$-Wasserstein distance in~\eqref{eq:Kantorovich_Formulation_W2}
 follows from the so-called Kantorovich formulation of optimal
 transport. An alternative formulation of this problem, called the
 Monge formulation of optimal transport, is given below:
 \begin{align}
   W_2^2(\mu, \nu) = \min_{\substack{T: \Omega \rightarrow \Omega \\
       T_{\#}\mu = \nu }} \int_{\Omega} |x - T(x)|^2 ~d\mu(x).
   \label{eq:Monge_Formulation_W2}
 \end{align}
 In the Monge formulation~\eqref{eq:Monge_Formulation_W2}, the
 minimization is carried out over the space of maps $T: \Omega
 \rightarrow \Omega$ for which the probability measure $\nu$ is
 obtained as the pushforward of $\mu$.  This can be viewed as a
 deterministic formulation of optimal transport, where the transport is
 carried out by a map, whereas the Kantorovich formulation
 \eqref{eq:Kantorovich_Formulation_W2} can be seen as a problem
 relaxation, where the transport plan is described by a joint
 probability measure $\pi$ over $\Omega \times \Omega$, with $\mu$ and
 $\nu$ as its marginals.  It is to be noted that the Monge formulation
 does not always admit a solution, while the Kantorovich problem
 does.
 Roughly speaking, the Kantorovich formulation is the ``minimal''
 extension of the Monge formulation, as both problems attain the same
 infimum~\cite{FS:15}. Further, the two formulations
 \eqref{eq:Kantorovich_Formulation_W2} and
 \eqref{eq:Monge_Formulation_W2} are equivalent under certain
 conditions~\cite{FS:15}.
 %
 The space of probability measures
   $\mathcal{P}(\Omega)$ endowed with the $L^2$-Wasserstein distance
   $W_2$ will equivalently be referred to as the $L^2$-Wasserstein
   space $(\mathcal{P}(\Omega), W_2)$ over $\Omega$. The following
 lemma, which follows from Theorem~6.9 in~\cite{CV:08}, establishes the
 equivalence between convergence in the sense of the topology of weak
 convergence and in the $L^2$-Wasserstein metric.
 \begin{lemma}[\bf \emph{Convergence in $(\mathcal{P}(\Omega), W_2)$}]
 \label{lemma:weak_vs_wasserstein_convergence}
 For compact $\Omega \subset \real^d$, the
 $L^2$-Wasserstein distance $W_2$ metrizes the weak convergence in
 $\mathcal{P}(\Omega)$. That is, a sequence of measures $\lbrace \mu_k
 \rbrace_{k \in \mathbb{N}}$ in $\mathcal{P}(\Omega)$ converges weakly
 to $\mu \in \mathcal{P}(\Omega)$ if and only if $\lim_{k \rightarrow
   \infty} W_2(\mu_k, \mu) = 0$.
 \end{lemma}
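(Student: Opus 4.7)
The plan is to prove the two directions separately, using the compactness of $\Omega$ in an essential way for both.

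For the forward direction (convergence in $W_2$ implies weak convergence), I would take an arbitrary bounded continuous test function $f: \Omega \to \real$, which is automatically uniformly continuous because $\Omega$ is compact. For each $k$, pick an optimal coupling $\pi_k \in \Pi(\mu_k, \mu)$ attaining $W_2^2(\mu_k, \mu)$. Writing
\begin{align*}
\int_\Omega f\, d\mu_k - \int_\Omega f\, d\mu = \int_{\Omega \times \Omega} \bigl( f(x) - f(y) \bigr)\, d\pi_k(x,y),
\end{align*}
I would fix $\epsilon > 0$, use uniform continuity to choose $\delta > 0$ so that $|x - y| < \delta$ forces $|f(x) - f(y)| < \epsilon$, and split the integral over the sets $\{|x-y| < \delta\}$ and $\{|x-y| \ge \delta\}$. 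The first piece is bounded by $\epsilon$, while Chebyshev's inequality gives $\pi_k(\{|x-y| \ge \delta\}) \le W_2^2(\mu_k,\mu)/\delta^2 \to 0$, and $f$ is bounded, so the second piece also vanishes.

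For the reverse direction (weak convergence implies $W_2$ convergence), my preferred approach is to invoke the Skorokhod representation theorem: since $\Omega$ is a compact (hence Polish) metric space and $\mu_k \to \mu$ weakly, there exist random variables $X_k, X$ on a common probability space $(\Xi, \mathcal{F}, \mathbb{P})$ with laws $\mu_k$ and $\mu$, such that $X_k \to X$ almost surely. The joint law $\pi_k = \text{Law}(X_k, X)$ lies in $\Pi(\mu_k, \mu)$, so it serves as an admissible (not necessarily optimal) coupling, giving
\begin{align*}
W_2^2(\mu_k, \mu) \;\le\; \int_{\Omega \times \Omega} |x - y|^2\, d\pi_k(x,y) \;=\; \mathbb{E}\bigl[|X_k - X|^2\bigr].
\end{align*}
Because $\Omega$ is compact, $|X_k - X|^2$ is dominated by the constant $\text{diam}(\Omega)^2$, and since $X_k \to X$ a.s., the dominated convergence theorem yields $\mathbb{E}[|X_k - X|^2] \to 0$, hence $W_2(\mu_k, \mu) \to 0$.

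The main obstacle is the reverse direction, where one needs a mechanism for producing a joint coupling whose cost vanishes; weak convergence on its own does not provide such a coupling. Skorokhod's theorem is precisely the correct tool, and compactness of $\Omega$ makes the dominated convergence step clean (avoiding moment/tail issues that would arise in the non-compact setting, where one would need uniform integrability of $|x|^2$ with respect to $\{\mu_k\}$). An alternative I considered was to extract a weakly convergent subsequence of optimal plans $\{\pi_k\}$ (using Prokhorov, invoked earlier in the appendix as Lemma~\ref{lemma:Prokhorov}) and use lower semicontinuity of the cost together with an explicit upper-bound coupling, but this requires more bookkeeping than the Skorokhod route.
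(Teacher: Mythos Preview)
Your argument is correct in both directions. The paper, however, does not prove this lemma at all: it simply states the result and cites Theorem~6.9 in Villani's book~\cite{CV:08} as the source. So there is no ``paper's own proof'' to compare against; you have supplied a self-contained proof where the authors only gave a reference.

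For what it is worth, your two directions are standard and clean. The forward direction (coupling plus uniform continuity plus Chebyshev) is essentially the textbook argument. For the reverse direction, your use of Skorokhod's representation theorem together with dominated convergence is a nice shortcut that exploits compactness directly; Villani's Theorem~6.9 is stated for general Polish spaces and therefore works harder, requiring uniform integrability of $|x|^p$ under the sequence $\{\mu_k\}$ in place of your bounded-domination step. On a compact $\Omega$ that extra hypothesis is automatic, and your route is the shorter one.
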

\section{Fr\'echet differentials of functionals on atomless measures} 
Let~$\mu_0, \mu_1 \in \mathcal{P}^{\rm a}(\Omega)$ be
atomless probability measures, and let $T_{\mu_0 \rightarrow \mu_1}$ 
be the optimal transport map from~$\mu_0$ to~$\mu_1$.
Furthermore, for $\epsilon \in [0,1]$, let: 
\begin{align} \label{eq:gen_geodesic}
	\mu_\epsilon = ((1-\epsilon) \id + \epsilon T_{\mu_0 \rightarrow \mu_1})_{\#} \mu_0.
\end{align}

We now begin by introducing the notion of first variation of a
functional on $\mathcal{P}(\Omega)$ as follows:
\begin{definition}[\bf \emph{First variation of a functional on $\mathcal{P}(\Omega)$}]
  Let $F: \mathcal{P}(\Omega) \rightarrow \real$ and $\mu_0 \in \mathcal{P}^{\rm a}(\Omega)$.
  Suppose that there exists a unique $\varphi$ such that for any $\mu_1 \in \mathcal{P}^{\rm a}(\Omega)$
  and $\lbrace \mu_{\epsilon} \rbrace_{\epsilon \in [0,1]}$ as defined in~\eqref{eq:gen_geodesic}, 
  the following holds: 
    \begin{align*}
    		\left. \frac{d}{d\epsilon}
    F(\mu_{\epsilon}) \right|_{\epsilon = 0} = \lim_{\epsilon
    \rightarrow 0^{+}} \frac{1}{\epsilon} \int_{\Omega} \varphi \left(d\mu_{\epsilon} - d\mu_0 \right).
    \end{align*}
   Then $\varphi$ is the first variation of $F$ evaluated at $\mu_0$,
   denoted as $\varphi = \frac{\delta F}{\delta \mu}(\mu_0)$.
\end{definition}
For functionals for which the first variation exists as in the above
definition, we can introduce the notion of Fr{\'e}chet
derivative on the $L^2$-Wasserstein
space~$(\mathcal{P}(\Omega), W_2)$:
\begin{definition}[\bf \emph{Derivative of a functional on $(\mathcal{P}(\Omega), W_2)$}]
  A functional~$F : \mathcal{P}(\Omega) \rightarrow \real$ is
  Fr{\'e}chet differentiable with derivative~$\xi \in L^2((\Omega, \mu); \real^d)$
  at an atomless measure $\mu_0 \in \mathcal{P}^{\rm a}(\Omega)$ 
  if for any $\mu_1 \in \mathcal{P}^{\rm a}(\Omega)$
  and $\lbrace \mu_{\epsilon}\rbrace_{\epsilon \in [0,1]}$ as defined in~\eqref{eq:gen_geodesic},
  the following holds:
\begin{align*}
  \lim_{\epsilon \rightarrow 0^{+}} \frac{F(\mu_{\epsilon}) - F(\mu_0) 
  		- \int_{\Omega} \left\langle \xi,   T_{\mu_0 \rightarrow \mu_\epsilon} - \id	\right\rangle d\mu_0}{W_2(\mu_0, \mu_1)} = 0,
\end{align*}
where~$\xi = \nabla \varphi$ and $\varphi = \frac{\delta F}{\delta
  \tilde{\mu}} (\mu_0)$.
\end{definition}
Furthermore, we define the directional derivative of~$F$ at~$\mu_0$
along a tangent vector field~$\mathbf{v} \in L^2((\Omega,\mu);\real^d)$ as:
\begin{align*}
  D_{\mathbf{v}}F(\mu_0) = \int_{\Omega}
  \left \langle \xi, \mathbf{v} \right \rangle d\mu,
\end{align*}
where $\xi$ is the Fr\'echet derivative of $F$ evaluated at~$\mu_0$.  
\section{Results on regularity of functionals}
The following lemma can be verfied for strongly geodesically 
convex functionals as in Definition~\ref{defn:strong_geodesic_convexity}:
\begin{lemma}[\bf \emph{Strongly geodesically convex functionals}]
\label{lemma:strongly_convex_functional}
Let $F: \mathcal{P}(\Omega) \rightarrow \real$ be an
$m$-strongly (generalized) geodesically convex functional on $(\mathcal{P}(\Omega),W_2)$
w.r.t base measure~$\theta \in \mathcal{P}^{\rm a}(\Omega)$.
Let $\mu , \nu \in \mathcal{P}^{\rm a}(\Omega)$ be
atomless probability measures and let $\xi_{\mu}$, 
$\xi_{\nu}$ and $\xi_{\theta}$ be the Fr{\'e}chet derivatives of~$F$ 
evaluated at~$\mu$,~$\nu$ and~$\theta$ respectively.
The following holds:
\begin{align*}
  &\int_{\Omega} \left \langle \xi_{\nu}(T_{\theta
      \rightarrow \nu}) - \xi_{\mu}(T_{\theta \rightarrow \mu}) , T_{\theta
      \rightarrow \nu} - T_{\theta \rightarrow \mu} \right \rangle
  d\theta \\ &\geq m \int_{\Omega} \left| T_{\theta \rightarrow \nu} - T_{\theta \rightarrow \mu} \right|^2 d\theta,
\end{align*}
where $T_{\theta \rightarrow \mu}: \Omega \rightarrow \Omega$ and
$T_{\theta \rightarrow \nu}: \Omega \rightarrow \Omega$ are optimal
transport maps from $\theta$ to $\mu$ and from $\theta$ to $\nu$
respectively.
\end{lemma}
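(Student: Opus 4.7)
The plan is to mimic the standard finite-dimensional argument that $m$-strong convexity yields a strong monotonicity inequality for the (sub)gradient, adapted to the Wasserstein setting via the generalized-geodesic characterization of strong convexity in Definition~\ref{defn:strong_geodesic_convexity}. First I would invoke that definition directly to obtain
\[
F(\nu) \ge F(\mu) + \int_{\Omega} \bigl\langle \xi_{\mu}(T_{\theta\to\mu}),\, T_{\theta\to\nu} - T_{\theta\to\mu} \bigr\rangle\,d\theta + \frac{m}{2}\int_{\Omega} \bigl|T_{\theta\to\nu} - T_{\theta\to\mu}\bigr|^{2}\,d\theta.
\]

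Next I would apply the same definition with the roles of $\mu$ and $\nu$ interchanged; this is admissible because $\mu$, $\nu$, and $\theta$ all lie in $\mathcal{P}^{\rm a}(\Omega)$, and Brenier's theorem (cf.~\cite[Theorem 1.17]{FS:15}) guarantees existence and uniqueness of the optimal transport maps from the atomless reference $\theta$ to both $\mu$ and $\nu$. This symmetric application yields
\[
F(\mu) \ge F(\nu) + \int_{\Omega} \bigl\langle \xi_{\nu}(T_{\theta\to\nu}),\, T_{\theta\to\mu} - T_{\theta\to\nu} \bigr\rangle\,d\theta + \frac{m}{2}\int_{\Omega} \bigl|T_{\theta\to\nu} - T_{\theta\to\mu}\bigr|^{2}\,d\theta.
\]

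Adding the two inequalities cancels the $F(\mu)$ and $F(\nu)$ terms. Using the sign identity $T_{\theta\to\mu}-T_{\theta\to\nu} = -(T_{\theta\to\nu}-T_{\theta\to\mu})$ to combine the two first-order contributions into a single inner product, and collecting the two $\tfrac{m}{2}$-terms, one arrives at
\[
0 \ge -\int_{\Omega} \bigl\langle \xi_{\nu}(T_{\theta\to\nu}) - \xi_{\mu}(T_{\theta\to\mu}),\, T_{\theta\to\nu} - T_{\theta\to\mu}\bigr\rangle\,d\theta + m\int_{\Omega} \bigl|T_{\theta\to\nu} - T_{\theta\to\mu}\bigr|^{2}\,d\theta,
\]
which rearranges precisely to the claimed inequality.

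Since the argument is a direct two-step symmetrization of the defining inequality, I do not anticipate any real analytical obstacle; the only point requiring a line of care is the tacit measurability claim that the compositions $\xi_{\mu}\circ T_{\theta\to\mu}$ and $\xi_{\nu}\circ T_{\theta\to\nu}$ make sense as $L^{2}(\theta)$-integrands, which follows from the Fr\'echet differentiability of $F$ at atomless measures together with the fact that the optimal transport maps are $\theta$-measurable. I also note in passing that $\xi_{\theta}$, listed in the statement, does not appear in the inequality and seems to have been included only for notational completeness; my proof sketch does not invoke it.
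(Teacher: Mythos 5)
Your symmetrization argument is correct: the paper itself gives no proof of this lemma (it only remarks that it ``can be verified'' from Definition~\ref{defn:strong_geodesic_convexity}), and applying the defining first-order inequality twice with $\mu$ and $\nu$ interchanged, adding, and cancelling $F(\mu)+F(\nu)$ is exactly the intended verification, mirroring the standard Euclidean derivation of strong monotonicity of the gradient from strong convexity. Your side remarks are also apt: the interchange is legitimate because the definition is quantified over all atomless $\mu,\nu$ with the same reference $\theta$, and $\xi_{\theta}$ indeed plays no role in the inequality.
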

Similarly, the following lemma can be verfied for $l$-smooth functionals
as defined in~\ref{defn:l_smooth_functional}:
\begin{lemma}[\bf \emph{$l$-smooth functionals}]
\label{lemma:l_smooth_functional}
Let $F: \mathcal{P}(\Omega) \rightarrow \real$ be an
$l$-smooth functional on $(\mathcal{P}(\Omega),W_2)$
w.r.t. a base measure~$\theta \in \mathcal{P}^{\rm a}(\Omega)$.
Let $\mu , \nu \in \mathcal{P}^{\rm a}(\Omega)$ be
atomless probability measures and let $\xi_{\mu}$, 
$\xi_{\nu}$ and $\xi_{\theta}$ be the Fr{\'e}chet derivatives of~$F$ 
evaluated at~$\mu$,~$\nu$ and~$\theta$ respectively.
The following holds:
\begin{align*}
	&\left| \int_{\Omega} \left \langle \xi_{\nu}( T_{\theta \rightarrow \nu} ) - \xi_{\mu}( T_{\theta \rightarrow \mu} ),
      T_{\theta \rightarrow \nu} - T_{\theta \rightarrow \mu} \right \rangle d\theta \right|  \\
    &\leq l \int_{\Omega} \left| T_{\theta \rightarrow \nu} - T_{\theta \rightarrow \mu} \right|^2 d\theta.
\end{align*}
\end{lemma}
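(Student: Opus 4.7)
The plan is to apply Definition~\ref{defn:l_smooth_functional} twice, once with the pair $(\mu,\nu)$ and once with the roles of $\mu$ and $\nu$ interchanged, and then add the two inequalities via the triangle inequality. This mirrors the well-known Euclidean argument that $l$-smoothness of $p$ implies $|\langle \nabla p(y) - \nabla p(x), y - x\rangle| \leq l\|y-x\|^2$, which is in fact the special case recovered when $\theta$ is chosen so that $T_{\theta\to\mu}$ and $T_{\theta\to\nu}$ are deterministic translations.

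Concretely, I would set
\[
A := F(\nu) - F(\mu) - \int_{\Omega} \langle \xi_{\mu}(T_{\theta\to\mu}),\, T_{\theta\to\nu} - T_{\theta\to\mu} \rangle \, d\theta,
\]
\[
B := F(\mu) - F(\nu) - \int_{\Omega} \langle \xi_{\nu}(T_{\theta\to\nu}),\, T_{\theta\to\mu} - T_{\theta\to\nu} \rangle \, d\theta.
\]
The $l$-smoothness hypothesis with reference measure $\theta$, applied first to $(\mu,\nu)$ and then to $(\nu,\mu)$, gives $|A| \leq \tfrac{l}{2}\int_\Omega |T_{\theta\to\nu} - T_{\theta\to\mu}|^2 \, d\theta$ and the identical bound for $|B|$. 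The second application is legitimate because Definition~\ref{defn:l_smooth_functional} quantifies over every pair of atomless measures.

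The key observation is that $F(\mu)$ and $F(\nu)$ cancel in $A+B$, leaving exactly the integral appearing on the left-hand side of the lemma:
\[
A + B \;=\; \int_{\Omega} \langle \xi_{\nu}(T_{\theta\to\nu}) - \xi_{\mu}(T_{\theta\to\mu}),\; T_{\theta\to\nu} - T_{\theta\to\mu}\rangle \, d\theta.
\]
A triangle-inequality step $|A+B|\le |A|+|B|$ then produces the constant $l$ on the right-hand side, completing the proof. There is no substantive obstacle: the argument is purely algebraic given Definition~\ref{defn:l_smooth_functional}, and the only minor point worth stating is the symmetric use of the definition after swapping the base and target measures.
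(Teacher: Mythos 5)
Your proof is correct: the two applications of Definition~\ref{defn:l_smooth_functional} (to the ordered pairs $(\mu,\nu)$ and $(\nu,\mu)$, with $\theta$ fixed as base measure) are both licensed by the definition, the $F$-terms cancel in $A+B$ to leave exactly the integral in the lemma, and the triangle inequality yields the constant $l$. The paper states this lemma without proof (``can be verified''), and your symmetrization argument is precisely the standard verification the authors are invoking, mirroring the Euclidean case.
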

\section{Supporting results for
  Theorem~\ref{thm:conv_prox_recursion_wasserstein}}

\begin{lemma}[\bf \emph{Compactness and convexity of sublevel sets}]
\label{lemma:compactness_sublevel_F}
Let $F$ satisfy the regularity conditions of
Assumption~\ref{ass:regularity}.
Then, the $F$-sublevel set of any absolutely continuous probability
measure $\mu \in \mathcal{P}(\Omega)$ is compact and geodesically
convex in the $L^2$-Wasserstein space $(\mathcal{P}(\Omega), W_2)$.
\end{lemma}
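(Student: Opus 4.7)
The plan is to establish the two properties of the $F$-sublevel set $\mathcal{S}(\mu) = \{\nu \in \mathcal{P}(\Omega) : F(\nu) \leq F(\mu)\}$ separately: geodesic convexity with respect to generalized displacement interpolation, and compactness in $(\mathcal{P}(\Omega), W_2)$.

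For geodesic convexity, I would fix the reference measure $\theta = \mu$, which is atomless since $\mu$ is absolutely continuous. Given any $\nu_1, \nu_2 \in \mathcal{S}(\mu)$, the optimal transport maps $T_{\mu \rightarrow \nu_1}$ and $T_{\mu \rightarrow \nu_2}$ exist uniquely by~\cite[Theorem~1.17]{FS:15}, so the generalized displacement interpolant $\gamma_t = ((1-t) T_{\mu \rightarrow \nu_1} + t T_{\mu \rightarrow \nu_2})_{\#} \mu$ is well-defined for every $t \in [0,1]$. Since $F$ is generalized geodesically convex with respect to any atomless base measure by Assumption~\ref{ass:regularity}, the inequality $F(\gamma_t) \leq (1-t) F(\nu_1) + t F(\nu_2) \leq F(\mu)$ holds, which places $\gamma_t$ in $\mathcal{S}(\mu)$ for all $t \in [0,1]$.

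For compactness, I would view $\mathcal{S}(\mu)$ as a closed subset of $(\mathcal{P}(\Omega), W_2)$, which is itself compact by Corollary~\ref{corollary:compactness_P_Omega} (since $\Omega$ is compact). The central step is to show that $F$ is lower semicontinuous in the weak topology — equivalently, with respect to $W_2$, by Lemma~\ref{lemma:weak_vs_wasserstein_convergence} — so that $\mathcal{S}(\mu) = F^{-1}((-\infty, F(\mu)])$ is closed. This follows from the continuity of geodesically convex functionals on $(\mathcal{P}(\Omega), W_2)$ noted in Section~\ref{sec:preliminaries}, which is the Wasserstein analogue of the classical fact that convex functions on Euclidean domains are continuous in the interior of their domain. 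As a closed subset of a compact metric space, $\mathcal{S}(\mu)$ is then compact.

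The main obstacle is rigorously establishing the continuity (or at least lower semicontinuity) of $F$ on $(\mathcal{P}(\Omega), W_2)$, since the ``convex implies continuous'' argument is more delicate in this infinite-dimensional setting. A concrete strategy is to sandwich $F$ between affine-in-$t$ functions along generalized geodesics, using the lower bound from the first-order convexity characterization in Lemma~\ref{lemma:first_order_convexity} and the upper bound from the $l$-smoothness property in Definition~\ref{defn:l_smooth_functional}, both of which are granted by Assumption~\ref{ass:regularity}. Since the transport-map representation requires an atomless reference measure, one passes to general $\nu_1, \nu_2 \in \mathcal{P}(\Omega)$ by approximation through the density of atomless (e.g.~absolutely continuous) measures in $(\mathcal{P}(\Omega), W_2)$, which yields the required closedness of $\mathcal{S}(\mu)$ and completes the compactness argument.
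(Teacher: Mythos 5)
Your proposal is correct and follows essentially the same route as the paper: geodesic convexity is obtained by taking $\mu$ itself as the (absolutely continuous, hence atomless) reference measure for the generalized displacement interpolant and applying the convexity inequality, and compactness follows from closedness of the sublevel set inside the compact space $(\mathcal{P}(\Omega),W_2)$. The only difference is that the paper simply invokes the continuity of geodesically convex functionals asserted in Section~\ref{sec:preliminaries} without further justification, whereas you flag this as the delicate step and sketch how to establish it via the first-order convexity bound and $l$-smoothness; that added care is welcome but not a departure in method.
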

\begin{proof}
  For any $\mu \in \mathcal{P}(\Omega)$, the sublevel set
  $\mathcal{S}({\mu}) = \lbrace \nu \in \mathcal{P}(\Omega) | F(\nu)
  \leq F(\mu) \rbrace$ is closed in $(\mathcal{P}(\Omega), W_2)$,
  since $F$ is continuous and $\mathcal{P}(\Omega)$ is closed and
  compact. 
  This implies that $\mathcal{S}({\mu})$ is
  also compact since it is a closed subset of a compact set.

  It holds 
  that $(\mathcal{P}(\Omega), W_2)$ is geodesically convex, and
  consider, for any $\nu_0, \nu_1 \in \mathcal{S}({\mu})$, and $\nu_t
  \in \mathcal{P}(\Omega)$, for $t \in [0,1]$, the generalized
  geodesic between $\nu_0$ to $\nu_1$ with $\mu$ as the reference
  measure\footnote{From~\cite[Theorem~1.17]{FS:15} 
  it follows that unique optimal transport maps from $\mu$ to $\nu_0$
  and $\mu$ to $\nu_1$ exist, since $\mu$ is absolutely continuous,
  and therefore so does a unique generalized geodesic in
  $(\mathcal{P}(\Omega), W_2)$ between $\nu_0$ and $\nu_1$ as in
  Definition~\ref{defn:generalized_displacement_interpolation}.}.  From
  the (generalized) geodesic convexity of $F$ we have that $F(\nu_t)
  \leq (1-t) F(\nu_0) + t F(\nu_1) \leq F(\mu)$ (since $F(\nu_0) \leq
  F(\mu)$ and $F(\nu_1) \leq F(\mu)$ by definition of
  $\mathcal{S}({\mu})$).  This implies that $\nu_t \in
  \mathcal{S}({\mu})$ for any $t \in [0,1]$, from which we infer the
  geodesic convexity of $\mathcal{S}({\mu})$.
%

\end{proof}

\begin{lemma}[\bf \emph{Strong convexity of objective functional}]
\label{lemma:strongly_convex_PGD}
Let $F$ satisfy the regularity conditions of Assumption~\ref{ass:regularity}.
For any absolutely continuous probability measure $\mu \in
\mathcal{P}(\Omega)$, the functional $G(\nu) = \frac{1}{2\tau}
W_2^2(\mu, \nu) + F(\nu)$ is $\left( \frac{1}{\tau} - l
\right)$-strongly (generalized) geodesically convex (in the sense of
Definition~\ref{defn:strong_geodesic_convexity} w.r.t. reference measure~$\mu$) 
over $\supscr{\mathcal{P}}{a}(\Omega)$ for $0< \tau < 1/l$.
\end{lemma}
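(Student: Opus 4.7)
The plan is to treat the two summands of $G$ separately: derive an exact second-order expansion for $\frac{1}{2\tau} W_2^2(\mu,\cdot)$ along generalized geodesics based at $\mu$, combine it with the lower bound coming from $l$-smoothness of $F$, and recognize the sum as the first-order strong-convexity inequality of Definition~\ref{defn:strong_geodesic_convexity} with constant $m = 1/\tau - l > 0$.

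First, I would fix $\nu_0,\nu_1 \in \mathcal{P}^{\rm a}(\Omega)$ and let $T_{\mu\to\nu_0}, T_{\mu\to\nu_1}$ be the (a.e.~unique) optimal transport maps from $\mu$, which exist by atomlessness. Since $\mu$ is atomless, these maps are gradients of convex functions by Brenier's theorem, hence so is any convex combination $(1-t)T_{\mu\to\nu_0} + tT_{\mu\to\nu_1}$. By Brenier's uniqueness, this convex combination is itself the optimal transport map from $\mu$ to the displacement interpolant $\nu_t$. Substituting into the Monge formulation and expanding $|(1-t)a+tb - \id|^2$ as $(1-t)|a-\id|^2+t|b-\id|^2 - t(1-t)|a-b|^2$ yields the exact identity
\begin{align*}
  W_2^2(\mu,\nu_1) &= W_2^2(\mu,\nu_0) + 2\int_\Omega \langle T_{\mu\to\nu_0}-\id,\, T_{\mu\to\nu_1}-T_{\mu\to\nu_0}\rangle\, d\mu \\
  &\qquad + \int_\Omega |T_{\mu\to\nu_1}-T_{\mu\to\nu_0}|^2\, d\mu,
\end{align*}
i.e., $W_2^2(\mu,\cdot)$ admits an \emph{exact} Taylor expansion of order two along generalized geodesics based at $\mu$. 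Multiplying by $\frac{1}{2\tau}$ and recognizing $T_{\mu\to\nu_0}-\id$ as the lift along $T_{\mu\to\nu_0}$ of the Fr\'echet derivative $\nabla\phi_{\nu_0\to\mu} = \id - T_{\nu_0\to\mu}$ of $\tfrac12 W_2^2(\mu,\cdot)$ at $\nu_0$, this gives the strong-convexity identity for the Wasserstein term with modulus $1/\tau$.

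Next, I would invoke $l$-smoothness of $F$ (Definition~\ref{defn:l_smooth_functional}) with base measure $\theta = \mu$, which yields the lower bound
\begin{align*}
  F(\nu_1) \geq F(\nu_0) &+ \int_\Omega \langle \xi_{\nu_0}(T_{\mu\to\nu_0}),\, T_{\mu\to\nu_1}-T_{\mu\to\nu_0}\rangle\, d\mu \\
  &- \frac{l}{2}\int_\Omega |T_{\mu\to\nu_1}-T_{\mu\to\nu_0}|^2\, d\mu,
\end{align*}
where $\xi_{\nu_0}$ is the Fr\'echet derivative of $F$ at $\nu_0$. Adding the two estimates and identifying the Fr\'echet derivative of $G$ at $\nu_0$ lifted along $T_{\mu\to\nu_0}$ as $\xi_{G,\nu_0}(T_{\mu\to\nu_0}) = \tfrac{1}{\tau}(T_{\mu\to\nu_0}-\id) + \xi_{\nu_0}(T_{\mu\to\nu_0})$ produces exactly the inequality of Definition~\ref{defn:strong_geodesic_convexity} with base measure $\theta = \mu$ and modulus $m = \frac{1}{\tau}-l$; the hypothesis $\tau < 1/l$ guarantees $m>0$.

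The main obstacle is the first step: justifying that $(1-t)T_{\mu\to\nu_0}+tT_{\mu\to\nu_1}$ really is the optimal transport map from $\mu$ to $\nu_t$, so that the Monge expansion becomes an equality (rather than merely an upper bound) and the strong-convexity estimate is sharp. This rests on Brenier's theorem applied to the atomless reference $\mu$ and on the fact that convex combinations of gradients of convex functions are gradients of convex functions; once this is in hand, the remainder of the argument is a direct algebraic combination with the smoothness inequality for $F$.
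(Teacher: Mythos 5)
Your proof is correct, and it reaches the conclusion by a genuinely different route than the paper's. The paper also splits $G$ into the Wasserstein term and $F$, but it works with the \emph{derivative-monotonicity} form of strong convexity: writing $\eta_i$ for the Fr\'echet derivative of $G$ at $\nu_i$, it lower-bounds $\int_\Omega \langle \eta_2(T_{\mu\to\nu_2})-\eta_1(T_{\mu\to\nu_1}),\,T_{\mu\to\nu_2}-T_{\mu\to\nu_1}\rangle\,d\mu$ by $(\tfrac{1}{\tau}-l)\int_\Omega |T_{\mu\to\nu_2}-T_{\mu\to\nu_1}|^2\,d\mu$, combining the gradient form of $l$-smoothness (Lemma~\ref{lemma:l_smooth_functional}) with the identity $\int_\Omega\langle \nabla\phi_2(T_{\mu\to\nu_2})-\nabla\phi_1(T_{\mu\to\nu_1}),\,T_{\mu\to\nu_2}-T_{\mu\to\nu_1}\rangle\,d\mu=\int_\Omega|T_{\mu\to\nu_2}-T_{\mu\to\nu_1}|^2\,d\mu$. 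You instead prove the function-value inequality of Definition~\ref{defn:strong_geodesic_convexity} directly, from the exact second-order expansion of $W_2^2(\mu,\cdot)$ plus the two-sided bound of Definition~\ref{defn:l_smooth_functional} applied to $F$. Your version lands exactly on the definition the lemma asserts (the paper's argument lands on the monotonicity characterization and leaves the passage back to the definitional inequality implicit), and it uses the same core identification $\nabla\phi_{\nu_0\to\mu}\circ T_{\mu\to\nu_0}=T_{\mu\to\nu_0}-\id$ that the paper does. One remark: the ``main obstacle'' you flag --- that $(1-t)T_{\mu\to\nu_0}+tT_{\mu\to\nu_1}$ is the optimal map onto $\nu_t$ --- is not actually needed for your argument. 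The identity you use is the $t=1$ endpoint identity, which is pure algebra under the integral, namely $|a+(b-a)|^2=|a|^2+2\langle a,\,b-a\rangle+|b-a|^2$ with $a=T_{\mu\to\nu_0}-\id$ and $b=T_{\mu\to\nu_1}-\id$, both maps being optimal from $\mu$ by hypothesis. Optimality of the interpolated map would only be required to verify the $t$-parametrized (zeroth-order) form of geodesic convexity, whereas Definition~\ref{defn:strong_geodesic_convexity} is stated purely as an endpoint first-order inequality, so your proof goes through without that step.
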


\begin{proof}
Since $F$ is $l$-smooth w.r.t. any (atomless) base measure, 
applying Lemma~\ref{lemma:l_smooth_functional}
for two atomless measures~$\nu_1$ and~$\nu_2$, we get:
\begin{align}
\begin{aligned}
 & \left| \int_{\Omega} \left \langle \xi_2 (T_{\mu \rightarrow \nu_2}) - \xi_1(T_{\mu \rightarrow \nu_1}), 
  									T_{\mu \rightarrow \nu_2} - T_{\mu \rightarrow \nu_1} \right \rangle d\mu \right|  \\
  				&\leq l \int_{\Omega} \left| T_{\mu \rightarrow \nu_2} - T_{\mu \rightarrow \nu_1} \right|^2  d\mu,
\end{aligned}
\label{eq:l_smooth_lemma_strongly_convex}
\end{align}
where~$\xi_1$ and~$\xi_2$ are the Fr{\'e}chet derivatives of $F$
evaluated at~$\nu_1$ and~$\nu_2$, respectively, and~$T_{\mu \rightarrow \nu_1}$
and $T_{\mu \rightarrow \nu_2}$ are the optimal transport maps from $\mu$ to $\nu_1$
and~$\nu_2$, respectively.
Let $\eta_i = \left. \nabla \left( \frac{\delta G}{\delta \nu} \right)
\right|_{\nu_i}$, for $i = 1,2$, and let $\phi_i = \frac{1}{2}
\left. \frac{\delta W_2^2(\mu, \nu)}{\delta \nu} \right|_{\nu_i}$ be
the so-called Kantorovich potential for the transport
from $\nu_1$ to $\mu$, for $i = 1,2$.  We now have:
\begin{align*}
  &\int_{\Omega} \left \langle \eta_2 (T_{\mu \rightarrow \nu_2}) - \eta_1 (T_{\mu \rightarrow \nu_1}), T_{\mu \rightarrow \nu_2} - T_{\mu \rightarrow \nu_1} \right \rangle d\mu \\
  &= \int_{\Omega} \left \langle \frac{1}{\tau} \nabla \phi_2 (T_{\mu \rightarrow \nu_2}) - \frac{1}{\tau} \nabla \phi_1 (T_{\mu \rightarrow \nu_1}) \right. \\
  						&\qquad \left. - \xi_1 (T_{\mu \rightarrow \nu_1}) + \xi_2 (T_{\mu \rightarrow \nu_2}), 
  										T_{\mu \rightarrow \nu_2} - T_{\mu \rightarrow \nu_1} \right \rangle d\mu \\
  &= \frac{1}{\tau} \int_{\Omega} \left \langle \nabla \phi_2(T_{\mu \rightarrow \nu_2}) - \nabla \phi_1 (T_{\mu \rightarrow \nu_1}), 
  										T_{\mu \rightarrow \nu_2} - T_{\mu \rightarrow \nu_1} \right \rangle d\mu \\
  & \qquad + \int_{\Omega} \left \langle \xi_2 (T_{\mu \rightarrow \nu_2}) - \xi_1 (T_{\mu \rightarrow \nu_1}), T_{\mu \rightarrow \nu_2} - T_{\mu \rightarrow \nu_1} \right \rangle d\mu  \\
  &\geq \left( \frac{1}{\tau} - l \right) \int_{\Omega} \left| T_{\mu \rightarrow \nu_2} - T_{\mu \rightarrow \nu_1}  \right|^2 d\mu ,
\end{align*} where the penultimate inequality above follows
from~\eqref{eq:l_smooth_lemma_strongly_convex}. 
\begin{sloppypar}
  We have also used the fact that $\int_{\Omega} \left \langle \nabla
    \phi_2(T_{\mu \rightarrow \nu_2}) - \nabla \phi_1(T_{\mu \rightarrow \nu_1}), T_{\mu \rightarrow \nu_2} - T_{\mu
      \rightarrow \nu_1} \right \rangle d\mu = \int_{\Omega} \left|
    T_{\mu \rightarrow \nu_2} - T_{\mu \rightarrow \nu_1} \right|^2
  d\mu$ (this follows from
    an application of~\cite[Theorem 1.17]{FS:15}).
 Since $\tau < \frac{1}{l}$, we
get that the functional $G$ is strongly convex with
parameter $\frac{1}{\tau} - l$. 
\end{sloppypar}
\end{proof}

\section{Proof of Proposition~\ref{prop:aggregate_objective_OT_cost}} \label{app:agg_obj_OT_proof}
  Let~$\widehat{\mu}^N_{\mathbf{x},\mathbf{w}} = \sum_{i=1}^N w_i
  \delta_{x_i}$ be a weighted discrete probability measure
  corresponding to~$\left \lbrace x_i \right \rbrace_{i=1}^N$ with
  weights~$\lbrace w_i \rbrace_{i=1}^N$, such that $w_i \in [0,1]$ and
  $\sum_{i=1}^N w_i = 1$.  The optimal transport cost
  between~$\widehat{\mu}^N_{\mathbf{x},\mathbf{w}}$ and~$\mu^\star$ is
  given by:
\begin{align*}
  C_f(\widehat{\mu}^N_{\mathbf{x},\mathbf{w}}, \mu^\star) =
  \inf_{\substack{T: \Omega \rightarrow \Omega \\ T_{\#}\mu^\star =
      \widehat{\mu}^N_{\mathbf{x}} }}~ \int_{\Omega} f(| x - T(x)
  |)~d\mu^\star(x),
\end{align*}
where the infimum is over the set of maps~$T$ that
pushforward~$\mu^\star$ to~$\widehat{\mu}^N_{\mathbf{x},\mathbf{w}}$
(since~$\widehat{\mu}^N_{\mathbf{x},\mathbf{w}}$ has finite support,
pushforward maps exist only from~$\mu^\star$
to~$\widehat{\mu}^N_{\mathbf{x},\mathbf{w}}$ and not the other way
around). Transport maps~$T : \Omega \rightarrow \lbrace x_i
\rbrace_{i=1}^N$ partition~$\Omega$ into~$N$ regions $\left \lbrace
  \mathcal{W}_i \right \rbrace_{i=1}^N$,  where
$\mathcal{W}_i = \{ x \in \Omega \,|\, T(x) = x_i \}$, of
mass~$\mu^\star(\mathcal{W}_i) = w_i$. Let~$T^\star : \Omega
\rightarrow \lbrace x_i \rbrace_{i=1}^N$ be the optimal transport map
from $\mu^\star$ to~$\widehat{\mu}^N_{\mathbf{x},\mathbf{w}}$, which
allows us to write:
\begin{align}	
          &C_f(\widehat{\mu}^N_{\mathbf{x},\mathbf{w}}, \mu^\star)
           =  \inf_{\substack{T: \Omega \rightarrow \Omega \nonumber \\ T_{\#}\mu^\star = \widehat{\mu}^N_{\mathbf{x},\mathbf{w}} }}~ \int_{\Omega} f(| x - T(x) |)~d\mu^\star(x)  \\
           &
          =	\int_{\Omega} f(| x - T^\star(x) |)~d\mu^\star(x) 
          = \sum_{i=1}^N \int_{\mathcal{W}_i^\star} f(|x - x_i|)
          ~d\mu^\star(x)
          \nonumber  \\
           & \qquad \geq  \sum_{i=1}^N \int_{\mathcal{V}_i} f(|x - x_i|)
          ~d\mu^\star(x) .
	\label{eq:wasserstein_geq_inequality}
      \end{align} The above inequality, which holds for any choice of
      $\mathbf{w}$, follows from the fact that $f$ is non-decreasing, and
      the definition of the Voronoi partition $\{\mathcal{V}_i\}_{i =
        1}^N$.   As $f$ is non-decreasing:
\begin{align*}
  \int_{\Omega} \hspace*{-0.5ex}\min_{i \in \lbrace 1, \ldots, N \rbrace} f(| x - x_i
  |)d\mu^\star(x) = \hspace*{-0.3ex}\sum_{i=1}^N \int_{\mathcal{V}_i} f(| x - x_i
  |) d\mu^\star(x),
\end{align*}
where~$\lbrace \mathcal{V}_i \rbrace_{i=1}^N$ 
is the Voronoi partition of~$\Omega$. We now define a map~$T_{\mathcal{V}} :
\Omega \rightarrow \Omega$ such that $T_{\mathcal{V}}(x) = x_i$ for~$x
\in \mathcal{V}_i$, with~$T_{\mathcal{V}}(\Omega) = \lbrace x_1,
\ldots, x_N \rbrace$, for which the following holds:
\begin{align*}
  \int_{\Omega} &f(|x - T_{\mathcal{V}}(x)|)~d\mu^\star(x) 
  = \sum_{i=1}^N \int_{\mathcal{V}_i} f(| x - x_i |)~d\mu^\star(x) \\
  &= \int_{\Omega} \min_{i \in \lbrace 1, \ldots, N \rbrace} f(| x - x_i |)~d\mu^\star(x).
\end{align*}
From~\eqref{eq:wasserstein_geq_inequality} and the above, we therefore
get:
\begin{align*}
  \int_{\Omega} f(|x - T_{\mathcal{V}}(x)|)~d\mu^\star(x) \leq
  C_f(\widehat{\mu}^N_{\mathbf{x},\mathbf{w}}, \mu^\star).
\end{align*}
%
For the particular choice of the weights~$w_i^\star = \mu^\star(\mathcal{V}_i)$, 
such that~$\widehat{\mu}^N_{\mathbf{x},\mu^\star(\mathcal{V})} = \sum_{i=1}^N \mu^\star(\mathcal{V}_i) \delta_{x_i}$,
we also get the inequality:
%
\begin{align*}
  & C_f \left(\sum_{i=1}^N \mu^\star(\mathcal{V}_i) \delta_{x_i} , \mu^\star \right)  \\
  &= \inf_{\substack{T: \Omega \rightarrow \Omega \\  T_{\#}\mu^\star = \sum_{i=1}^N \mu^\star(\mathcal{V}_i) \delta_{x_i} }}~ \int_{\Omega} f(| x - T(x) |)~d\mu^\star(x)  \\
  &\leq \int_{\Omega} f(|x - T_{\mathcal{V}}(x)|)~d\mu^\star(x),
\end{align*}
and we therefore get:
\begin{align*}
  C_f \left( \sum_{i=1}^N \mu^\star(\mathcal{V}_i) \delta_{x_i} , \mu^\star
  \right) = \int_{\Omega} \min_{ i \in \lbrace 1, \ldots, N \rbrace }
  f(| x - x_i |)~d\mu^\star(x),
\end{align*}
which establishes that:
\begin{align*}
  \min_{\substack{\mathbf{w} \in \realnonnegative^N}} C_f \left( \sum_{i=1}^N w_i \delta_{x_i}, \mu^\star \right) 
  &= \int_{\Omega} \min_{ i \in \lbrace 1, \ldots, N \rbrace } f(| x - x_i |)~d\mu^\star(x) \\
  &= \mathcal{H}_f(\mathbf{x}),
\end{align*}
with the minimizing weights $w_i^\star = \mu^\star(\mathcal{V}_i)$.

\section{Aggregate objective functions} \label{app:agg_obj_func}
\begin{proposition}[\bf \emph{Strict geodesic convexity of $C_f(\cdot, \mu^*)$}]
\label{prop:strictly_convex_C_f}
  Fix $\mu^* \in \mathcal{P}(\Omega)$ (absolutely continuous) as the
  reference measure and let $\mu_0, \mu_1 \in \mathcal{P}(\Omega)$.
  Let $T_{\mu^* \rightarrow \mu_0}$ and $T_{\mu^* \rightarrow \mu_1}$
  be optimal transport maps from $\mu^*$ to $\mu_0$ and $\mu^*$ to
  $\mu_1$ respectively, corresponding to the optimal transport cost
  $C_f$, and let $T_t = (1-t) T_{\mu^* \rightarrow \mu_0} + t T_{\mu^*
    \rightarrow \mu_1}$ for $t \in [0,1]$. For $\mu_t = {T_t}_{\#}
  \mu^*$, we have:
\begin{align*}
  C_f(\mu_t, \mu^* ) < (1-t) C_f(\mu_0, \mu^*) + t C_f(\mu_1, \mu^*).
\end{align*}
\end{proposition}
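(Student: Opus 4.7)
The plan is to exhibit $T_t$ as a competitor in the optimal transport problem defining $C_f(\mu^*, \mu_t)$, and then use strict convexity of $f$ combined with the triangle inequality in $\real^d$ to obtain the strict bound. First, I would note that because the cost $f(|x-y|)$ is symmetric in its two arguments, the Kantorovich formulation yields $C_f(\mu_t, \mu^*) = C_f(\mu^*, \mu_t)$, which is the key move that lets me work with maps pushing $\mu^*$ forward (rather than the other direction). Since $T_t = (1-t) T_{\mu^* \to \mu_0} + t T_{\mu^* \to \mu_1}$ satisfies ${T_t}_\# \mu^* = \mu_t$ by the definition of $\mu_t$, it is admissible in the Monge problem, so
\begin{align*}
  C_f(\mu_t, \mu^*) \le \int_{\Omega} f(|x - T_t(x)|)\, d\mu^*(x).
\end{align*}

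Next I would bound the integrand pointwise. Writing $T_0 = T_{\mu^* \to \mu_0}$, $T_1 = T_{\mu^* \to \mu_1}$, and using $x - T_t(x) = (1-t)(x - T_0(x)) + t(x - T_1(x))$, the Euclidean triangle inequality gives $|x - T_t(x)| \le (1-t)|x - T_0(x)| + t|x - T_1(x)|$. Applying $f$, which is non-decreasing, and then strict convexity of $f$, I obtain
\begin{align*}
  f(|x - T_t(x)|) &\le f\bigl((1-t)|x - T_0(x)| + t|x - T_1(x)|\bigr) \\
  &\le (1-t) f(|x - T_0(x)|) + t\, f(|x - T_1(x)|),
\end{align*}
with the second inequality strict whenever $|x - T_0(x)| \ne |x - T_1(x)|$ and $t \in (0,1)$. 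Integrating against $\mu^*$ and using that $T_0, T_1$ are the optimal transport maps (so their transport costs equal $C_f(\mu_0, \mu^*)$ and $C_f(\mu_1, \mu^*)$ respectively), I recover the convex combination bound.

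Finally, for strictness at $t \in (0,1)$, I would argue that the set where $|x - T_0(x)| = |x - T_1(x)|$ cannot have full $\mu^*$-measure unless $T_0 = T_1$ $\mu^*$-a.e. The cleanest route is to observe that equality throughout the integrated inequality forces $T_t$ to be a.e.\ optimal, but then the two terms $(1-t)(x - T_0(x))$ and $t(x - T_1(x))$ must be collinear and of equal sign almost everywhere; combined with $|x - T_0(x)| = |x - T_1(x)|$ this forces $T_0 = T_1$ $\mu^*$-a.e., and hence $\mu_0 = {T_0}_\# \mu^* = {T_1}_\# \mu^* = \mu_1$. The statement of strict geodesic convexity should thus be read modulo the degenerate case $\mu_0 = \mu_1$, and I would flag this explicitly.

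The main obstacle I expect is the direction-of-transport subtlety: the displacement interpolation is built from maps \emph{out of} $\mu^*$, whereas the functional $C_f(\cdot, \mu^*)$ a priori infimizes over maps into $\mu^*$. Handling this cleanly requires invoking symmetry of $C_f$ (equivalently, passing through the Kantorovich formulation with the symmetric coupling $(T_t, \id)_\# \mu^*$), and this is the step where one must be careful — the rest of the argument is a standard convexity computation once the correct competitor is identified.
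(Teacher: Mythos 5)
Your proof follows essentially the same route as the paper's: take $T_t$ as a Monge competitor, apply the Euclidean triangle inequality together with monotonicity of $f$, then strict convexity of $f$, and identify the resulting integrals with $C_f(\mu_0,\mu^*)$ and $C_f(\mu_1,\mu^*)$. You are in fact more careful than the paper on the two points it glosses over --- the symmetry of $C_f$ (via the Kantorovich relaxation) needed to justify using a map that pushes $\mu^*$ forward, and the degenerate cases $t\in\{0,1\}$ or $\mu_0=\mu_1$ in which the inequality cannot be strict --- both of which deserve the explicit flags you give them.
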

\begin{proof}
We have:
\begin{align*}
  &C_f(\mu_t, \mu^* ) \leq \int_{\Omega} f(|T_t(x) - x|) d\mu^*(x) \\
  &= \int_{\Omega} f\left( \left| (1-t) T_{\mu^* \rightarrow \mu_0}(x) + t T_{\mu^* \rightarrow \mu_1}(x) - x \right| \right) d\mu^*(x) \\
  &= \int_{\Omega} f\left( \left| (1-t) \left[T_{\mu^* \rightarrow \mu_0}(x) - x \right] \right. \right. \\
      & \left. \left. \qquad \qquad + t \left[T_{\mu^* \rightarrow \mu_1}(x) - x \right] \right| \right) d\mu^*(x) \\
  &\leq \int_{\Omega} f\left( (1-t) \left| T_{\mu^* \rightarrow \mu_0}(x) - x \right|  \right. \\
   & \left. \qquad \qquad + t \left|T_{\mu^* \rightarrow \mu_1}(x) - x \right| \right) d\mu^*(x),
\end{align*}
where the last inequality is a consequence of the fact that $f$ is
non-decreasing.  Further, if $f$ is strictly convex in $\Omega$, we
have:
\begin{align*}
 & C_f(\mu_t, \mu^* ) \\
 &< \int_{\Omega} \left[ (1-t) f\left( \left| T_{\mu^* \rightarrow \mu_0}(x) - x \right|\right) \right.  \\
 						& \left. \qquad \qquad + t f\left( \left|T_{\mu^* \rightarrow \mu_1}(x) - x \right| \right) \right] d\mu^*(x) \\
  &= (1-t)\int_{\Omega}  f\left( \left| T_{\mu^* \rightarrow \mu_0}(x) - x \right|\right) d\mu^*(x) \\
  						&\qquad \qquad + t\int_{\Omega}f\left( \left|T_{\mu^* \rightarrow \mu_1}(x) - x \right| \right) d\mu^*(x) \\
  &= (1-t) C_f(\mu_0, \mu^*) + t C_f(\mu_1, \mu^*).
\end{align*} \oprocend
\end{proof}
We now establish the following result: 
\begin{proposition}[\bf \emph{$l$-smoothness of $C_f(\cdot, \mu^*)$}]
\label{prop:l_smooth_C_f}
Let the Fr{\'e}chet derivative of the functional $F(\mu) = C_f(\mu,
\mu^*)$ at $\mu \in \mathcal{P}^r(\Omega)$ be denoted as
$\xi_{\mu}$.
The derivative~$\xi_{\mu}$ satisfies:
\begin{small}
\begin{align*}
 & \left| \int_{\Omega} \left \langle \xi_{\mu_2} (T_{\mu^* \rightarrow \mu_2})  - \xi_{\mu_1} (T_{\mu^* \rightarrow \mu_1}) ,
      T_{\mu^* \rightarrow \mu_2} - T_{\mu^* \rightarrow \mu_1} \right \rangle d\mu^* \right| \\
 & \leq l \int_{\Omega} \left| T_{\mu^* \rightarrow \mu_2} - T_{\mu^* \rightarrow \mu_1} \right|^2 d\mu^*,
\end{align*}
\end{small}
\noindent
where $T_{\mu^* \rightarrow \mu_1}$ and $T_{\mu^* \rightarrow \mu_2}$ 
are the optimal transport maps (w.r.t.~$f$) 
from $\mu^*$ to $\mu_1$ and~$\mu_2$, respectively.
\end{proposition}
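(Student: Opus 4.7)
The plan is to compute the Fr\'echet derivative of $F(\mu) = C_f(\mu,\mu^*)$ explicitly, reduce the left-hand side to a pointwise expression along the optimal transport maps, and then conclude by Cauchy-Schwarz together with the Lipschitz regularity inherited from the $l$-smoothness of $f$.

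First, I would identify $\xi_\mu$ with $\nabla \phi_\mu$, where $\phi_\mu$ is the Kantorovich potential for the optimal transport from $\mu$ to $\mu^*$ with cost $c(x,y) = f(|x-y|)$. From the first-order optimality conditions in Kantorovich duality, along the graph of the optimal transport map $T_{\mu \rightarrow \mu^*}$ we have, for $\mu$-a.e.~$x$,
\begin{align*}
\xi_\mu(x) = \nabla \phi_\mu(x) = g\bigl(x - T_{\mu \rightarrow \mu^*}(x)\bigr),
\end{align*}
where $g : \real^d \to \real^d$ is the gradient of $v \mapsto f(|v|)$, i.e., $g(v) = f'(|v|)\,v/|v|$ for $v \neq 0$ and $g(0) = 0$.

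Next, I would evaluate $\xi_{\mu_i}$ along the optimal transport map $T_{\mu^* \rightarrow \mu_i}$ from $\mu^*$ to $\mu_i$. Using that $T_{\mu^* \rightarrow \mu_i}$ and $T_{\mu_i \rightarrow \mu^*}$ are mutual inverses $\mu^*$-a.e., a consequence of the atomless assumption on $\mu^*$ and the existence-uniqueness result in~\cite[Theorem 1.17]{FS:15}, we obtain
\begin{align*}
\xi_{\mu_i}\bigl(T_{\mu^* \rightarrow \mu_i}(z)\bigr) = g\bigl(T_{\mu^* \rightarrow \mu_i}(z) - z\bigr), \quad i = 1,2.
\end{align*}
Setting $v_i(z) = T_{\mu^* \rightarrow \mu_i}(z) - z$, the left-hand side of the claimed inequality equals
\begin{align*}
\left| \int_\Omega \bigl\langle g(v_2(z)) - g(v_1(z)),\; v_2(z) - v_1(z) \bigr\rangle \, d\mu^*(z) \right|.
\end{align*}

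To conclude, I would apply Cauchy-Schwarz pointwise under the integral and then invoke the Lipschitz property of $g$: since $f$ is $l$-smooth on $\realnonnegative$ with $f(0) = 0$, the map $v \mapsto f(|v|)$ is $l$-Lipschitz-differentiable on $\real^d$, and hence $|g(v_2) - g(v_1)| \leq l\, |v_2 - v_1|$. Combining these two inequalities with the integral expression above, and using $v_2(z) - v_1(z) = T_{\mu^* \rightarrow \mu_2}(z) - T_{\mu^* \rightarrow \mu_1}(z)$, yields the claimed bound. The main obstacle is the first step: justifying the explicit form $\xi_\mu(x) = g(x - T_{\mu \rightarrow \mu^*}(x))$ requires the standard but delicate regularity theory for Kantorovich potentials associated to the cost $f(|\cdot|)$ on atomless measures. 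A secondary technical point is verifying that $\nabla[f(|\cdot|)]$ is $l$-Lipschitz on $\real^d$, which is immediate for $f(x) = x^2$ and holds more generally once the behaviour of $g$ near the origin is controlled (alternatively, one redefines $l$ to be the Lipschitz constant of $g$).
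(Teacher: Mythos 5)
Your proposal is correct and follows essentially the same route as the paper: both identify $\xi_\mu$ with $\nabla h(\id - T_{\mu\rightarrow\mu^*})$ for $h(\mathbf{v}) = f(|\mathbf{v}|)$ via the Kantorovich potential and \cite[Theorem~1.17]{FS:15}, use the inverse relation between $T_{\mu^*\rightarrow\mu_i}$ and $T_{\mu_i\rightarrow\mu^*}$ to reduce the integrand to $\langle \nabla h(v_2)-\nabla h(v_1), v_2-v_1\rangle$, and conclude by Cauchy--Schwarz and the Lipschitz continuity of $\nabla h$ inherited from the $l$-smoothness of $f$. Your closing caveat about controlling $\nabla[f(|\cdot|)]$ near the origin is a fair technical remark that the paper glosses over with the same one-line assertion.
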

\begin{proof}
  Let $\phi_{\mu} = \frac{\delta C_f(\mu, \mu^*)}{\delta \mu}$ be the
  Kantorovich potential for the optimal transport from $\mu$ to
  $\mu^*$.  We now have the following relation~\cite[Theorem~1.17]{FS:15}:
\begin{align*}
	T_{\mu \rightarrow \mu^*} = \id - \left( \nabla h \right)^{-1} (\nabla \phi_{\mu}),
\end{align*}
where the function $h : \real^d \rightarrow \real$ is such that
$h(\mathbf{v}) = f(|\mathbf{v}|)$. It follows from the 
$l$-smoothness of~$f$ that the function~$h$ is also
$l$-smooth. From the above and $l$-smoothness 
of $h$, (with $\xi_\mu = \nabla \phi_\mu$) we get:
\begin{align*}
  &\left| \int_{\Omega} \left \langle \xi_{\mu_2}(T_{\mu^* \rightarrow \mu_2}) - \xi_{\mu_1}(T_{\mu^* \rightarrow \mu_1}) ,
      T_{\mu^* \rightarrow \mu_2} - T_{\mu^* \rightarrow \mu_1} \right \rangle  d\mu^* \right| \\
  &= \left| \int_{\Omega} \left \langle \nabla h \left( T_{\mu^* \rightarrow \mu_2} - \id \right) \right. \right. \\
       & \qquad \qquad \left. \left. - \nabla h \left(  T_{\mu^* \rightarrow \mu_1} - \id  \right) ,  T_{\mu^* \rightarrow \mu_2} -  T_{\mu^* \rightarrow \mu_1} \right \rangle d\mu^* \right| \\
  &\leq \int_{\Omega}\left| \left \langle \nabla h \left( T_{\mu^* \rightarrow \mu_2} - \id \right) \right. \right. \\
       & \qquad \qquad \left. \left. - \nabla h \left(  T_{\mu^* \rightarrow \mu_1} - \id  \right) ,  T_{\mu^* \rightarrow \mu_2} -  T_{\mu^* \rightarrow \mu_1} \right \rangle \right| d\mu^* \\
  &\leq l \int_{\Omega} \left| T_{\mu^* \rightarrow \mu_2} -  T_{\mu^* \rightarrow \mu_1}  \right|^2 d\mu^*.
\end{align*}  \oprocend
\end{proof}
\begin{proposition}[\bf \emph{Lipschitz continuous Fr{\'e}chet derivative of~$C_f(\cdot, \mu^*)$}]
\label{prop:lipschitz_frechet_derivative}
Let the Fr{\'e}chet derivative of the functional $F(\mu) = C_f(\mu,
\mu^*)$ at $\mu \in \mathcal{P}^r(\Omega)$ be denoted as $\xi_{\mu}$.
For any~$x, x' \in \Omega$, the derivative~$\xi_{\mu}$ satisfies:
\begin{align*}
	\left\| \xi_\mu(x) - \xi_\mu(x') \right\| \leq \lambda \left\| x - x' \right\|.
\end{align*}
\end{proposition}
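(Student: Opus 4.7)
The plan is to exploit the explicit characterization of the Fr\'echet derivative that was established in the proof of Proposition~\ref{prop:l_smooth_C_f}. There it was shown that, with $h:\real^d\to\real$ defined by $h(\mathbf{v})=f(|\mathbf{v}|)$, the Kantorovich potential $\phi_\mu$ satisfies
\begin{align*}
  T_{\mu\to\mu^*} \;=\; \id - (\nabla h)^{-1}(\nabla\phi_\mu),
\end{align*}
and equivalently $\xi_\mu = \nabla\phi_\mu = \nabla h\bigl(\id - T_{\mu\to\mu^*}\bigr)$. So the pointwise behavior of $\xi_\mu$ is completely determined by $\nabla h$ composed with the displacement field $\id - T_{\mu\to\mu^*}$.

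First, I would note that the $l$-smoothness of $f$ transfers to $h$, so that $\nabla h$ is $l$-Lipschitz on $\real^d$. Applying this at the two points $x - T_{\mu\to\mu^*}(x)$ and $x' - T_{\mu\to\mu^*}(x')$ gives
\begin{align*}
  \|\xi_\mu(x)-\xi_\mu(x')\|
  &\leq l\,\bigl\|(x-x') - (T_{\mu\to\mu^*}(x)-T_{\mu\to\mu^*}(x'))\bigr\|\\
  &\leq l\,\|x-x'\| + l\,\|T_{\mu\to\mu^*}(x)-T_{\mu\to\mu^*}(x')\|.
\end{align*}
Thus the result reduces to showing that $T_{\mu\to\mu^*}$ is itself Lipschitz on $\Omega$, with some constant $L_T$; then $\lambda = l(1+L_T)$ works.

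Next I would invoke the regularity theory for optimal transport to obtain the Lipschitz bound on $T_{\mu\to\mu^*}$. Under the working hypotheses of the paper (the base domain $\Omega$ is compact and convex, and the measures in play are atomless with regular densities), Caffarelli's regularity theorem for the associated Monge-Amp\`ere-type equation yields $T_{\mu\to\mu^*}\in C^{0,1}(\Omega;\Omega)$ with a Lipschitz constant depending only on the convex geometry of $\Omega$, the bounds on the densities of $\mu$ and $\mu^*$, and the cost $f$. Combining with the previous display yields the claimed Lipschitz estimate on $\xi_\mu$.

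\textbf{Main obstacle.} The algebraic step is straightforward once the representation $\xi_\mu = \nabla h(\id - T_{\mu\to\mu^*})$ is in hand; the substantive difficulty is the Lipschitz regularity of the transport map. This requires both the convexity of $\Omega$ (to prevent degeneracies at the boundary) and two-sided density bounds for $\mu$ and $\mu^*$, and it is essentially a Monge-Amp\`ere regularity statement rather than something one can extract by elementary optimal-transport inequalities. In a fully rigorous write-up, this is where one would either impose additional regularity assumptions on $\mu,\mu^*$ or cite Caffarelli's theorem directly, since without some such hypothesis the bound can fail.
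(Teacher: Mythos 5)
Your proposal follows essentially the same route as the paper's proof: both use the representation $\xi_\mu = \nabla h(\id - T_{\mu \rightarrow \mu^*})$ from Proposition~\ref{prop:l_smooth_C_f}, the $l$-Lipschitz continuity of $\nabla h$, the triangle inequality, and then the Lipschitz continuity of the optimal transport map to conclude with $\lambda = l(1+\lip(T_{\mu\rightarrow\mu^*}))$. Your closing caveat is well taken --- the paper simply cites~\cite[Chapter~1.3]{FS:15} for the Lipschitz regularity of $T_{\mu\rightarrow\mu^*}$, whereas you correctly observe that this is really a Caffarelli-type regularity statement requiring convexity of $\Omega$ and density bounds on $\mu,\mu^*$ beyond what the proposition explicitly assumes.
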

\begin{proof}
From Proposition~\ref{prop:l_smooth_C_f}, we note that
$\xi_\mu = \nabla h \left( \id - T_{\mu \rightarrow \mu^*} \right)$
on~$\Omega$, where $\nabla h$ is Lipschitz continuous (with constant~$l$).
Therefore, for any~$x, x' \in \Omega$ we have:
\begin{align*}
	&\left\| \xi_\mu(x) - \xi_\mu(x') \right\| \\
	&= \left\| \nabla h \left( x - T_{\mu \rightarrow \mu^*}(x) \right) - \nabla h \left( x' - T_{\mu \rightarrow \mu^*}(x') \right) \right\| \\
	&\leq l \left\| x - x' \right\| + l \left\| T_{\mu \rightarrow \mu^*}(x) - T_{\mu \rightarrow \mu^*}(x') \right\| \\
	&\leq \underbrace{l \left( 1 + \lip(T_{\mu \rightarrow \mu^*}) \right)}_{\lambda} \left\| x - x' \right\|,
\end{align*}
where the final inequality follows from the fact that~$T_{\mu \rightarrow \mu^*}$
is a Lipschitz continuous map on~$\Omega$ (with Lipschitz constant $\lip(T_{\mu \rightarrow \mu^*})$)~\cite[Chapter~1.3]{FS:15}.
\end{proof}

\section{Proofs of Lemmas} \label{app:lemma_proofs}

\subsection{Proof of Lemma~\ref{lemma:conv_h,N}}
  We first recall that $F^{h,N}(\mathbf{x}) =
  F(\widehat{\mu}^{h,N}_{\mathbf{x}})$.  By the Glivenko-Cantelli
  Theorem~\cite{VSV:58} and
  Assumption~\ref{ass:kernel_properties}-(iv), we have:
  \begin{align*}
    \lim_{\substack{h \rightarrow 0, \\ N \rightarrow \infty}} \sup_{f
      \in \mathcal{M}} \left \lbrace
      \mathbb{E}_{\widehat{\mu}^{h,N}_{\mathbf{x}}} [f] -
      \mathbb{E}_{\mu} [f] \right \rbrace = 0, ~~a.s.
  \end{align*}   
  We denote the above as $\widehat{\mu}^{h,N}_{\mathbf{x}}
  \rightarrow_{\mathrm{u.a.s}}~\mu$, i.e.,
  $\widehat{\mu}^{h,N}_{\mathbf{x}}$ converges uniformly almost surely
  to~$\mu$ as $h \rightarrow 0$ and $N \rightarrow \infty$.  Note that
  this implies the (almost sure) weak convergence of
  $\{\widehat{\mu}^{h,N}_{\mathbf{x}}\}$ to $\mu$.  Therefore, by
  continuity of $F$ in the topology of weak convergence (which follows
  from the fact that~$F$ is Fre\'chet differentiable in the
  $L^2$-Wasserstein space), we have $\lim_{\substack{h \rightarrow 0,
      N \rightarrow \infty}} F^{h,N}(\mathbf{x}) = \lim_{\substack{h
      \rightarrow 0, N \rightarrow \infty}}
  F(\widehat{\mu}^{h,N}_{\mathbf{x}}) = F(\lim_{\substack{h
      \rightarrow 0, N \rightarrow \infty}}
  \widehat{\mu}^{h,N}_{\mathbf{x}}) = F(\mu)$, almost surely.

\subsection{Proof of Lemma~\ref{lemma:der_F^h,N_vs_F}}
  Let $\mathbf{x}(t) = (x_1(t), \ldots, x_N(t))$ be a curve in
  $\tilde{\Omega}_h^N$ parametrized by $t \in \real$, with
  $\dot{\mathbf{x}}(0) = \mathbf{v} = (\mathbf{v}_1, \ldots,
  \mathbf{v}_N)$, where $\mathbf{v}_i \in \real^d$ for all $i \in
  \lbrace 1, \ldots, N \rbrace$.  As $F^{h,N}$ is differentiable,
  partial derivatives exist and we can write:
\begin{align*}
  \frac{d}{dt} F^{h,N}(\mathbf{x}(0)) = \sum_{i=1}^N \left
    \langle \partial_i F^{h,N} (\mathbf{x}(0)) , \mathbf{v}_i \right
  \rangle.
\end{align*}
Since $F^{h,N}(\mathbf{x}) = F(\widehat{\mu}^{h,N}_{\mathbf{x}})$,
using the Fr{\'e}chet derivative of~$F$, we can write:
\begin{align*}
  \frac{d}{dt} F^{h,N}(\mathbf{x}(0))
  &= \frac{1}{N} \sum_{i=1}^N \int_{\Omega} \left \langle \nabla
      \varphi^{h,N}_{\mathbf{x}(0)}, \mathbf{v}_i \right \rangle~d\widehat{\mu}^h_{x_i(0)} \\
  &= \frac{1}{N} \sum_{i=1}^N \left \langle \int_{\Omega} \nabla \varphi^{h,N}_{\mathbf{x}(0)}
    ~d\widehat{\mu}^h_{x_i(0)} , \mathbf{v}_i \right \rangle.
\end{align*}
This holds for all $\mathbf{v} =
(\mathbf{v}_1, \ldots, \mathbf{v}_N)$ and  $\mathbf{x}(0) \in
\tilde{\Omega}_h^N$, thus, by uniqueness of the partial derivatives, it
holds that:
\begin{align*}
  \partial_i F^{h,N} (\mathbf{x}) = \frac{1}{N} \int_{\Omega} \nabla
  \varphi^{h,N}_{\mathbf{x}} ~d\widehat{\mu}^h_{x_i(0)},
\end{align*}  
where $\partial_i$ denotes the derivative w.r.t.~the $\supscr{i}{th}$
argument, and we consider any $\mathbf{x}(0) \in
\tilde{\Omega}^N_h$. From the previous expression:
\begin{align*}
  \partial_1 F^{h,N}(z, \eta) &= \frac{1}{N} \int_{\Omega} \nabla
  \varphi^{h,N}_{\mathbf{x}}~d\widehat{\mu}^h_{z}  \\
  &= \frac{1}{N} \int_{\supp \left(\widehat{\mu}^h_{z} \right)} \nabla
  \varphi^{h,N}_{\mathbf{x}}~d\widehat{\mu}^h_{z},
\end{align*}
where $z \in \tilde{\Omega}_h$, $\eta \in \tilde{\Omega}_h^{N-1}$,
$d\widehat{\mu}^h_{z} = \rho^{h}_z~\dvol$ with $\rho^{h}_z (x) =
K(x-z,h)$, and $\varphi^{h,N}_{\mathbf{x}}= \frac{\delta F}{\delta \nu}
\left. \right|_{\widehat{\mu}^{h,N}_{\mathbf{x}}}$, and the result follows.

\subsection{Proof of Lemma~\ref{lemma:neighborhood_cyclically_monotone}}
  For $\mathbf{x} \in \Delta_{\delta} \subset \Omega^N$, let
  $\mathbf{y} \in \mathring{\Omega}^N$ such that for all $i \in
  \lbrace 1, \ldots, N \rbrace$, we have $y_i \in B_{\delta/2} (x_i)$,
  where $B_{\delta/2} (x_i)$ is the open $\delta/2$-ball centered at
  $x_i \in \Omega$.  Now for any $j \in \lbrace 1, \ldots, N \rbrace$
  with $j \neq i$, we have $| y_i - x_j | = | y_i - x_i + x_i - x_j |
  \geq |x_i - x_j| - |y_i - x_i| > \delta - \delta/2 > \delta/2$,
  since $|x_i - x_j| > \delta$ as $\mathbf{x} \in \Delta_{\delta}$ and
  $|y_i - x_i| < \delta/2$.  Thus, among all (non-identity)
  permutations $\sigma$, we have:
\begin{align*}
  \frac{1}{N} \sum_{i=1}^N | x_i - y_{\sigma(i)} |^2 >
  \frac{\delta^2}{4} > \frac{1}{N} \sum_{i=1}^N | x_i - y_i |^2.
\end{align*}
Thus, we infer that $\mathbf{y} \in \Gamma_{\mathbf{x}}$  for an
arbitrary $\mathbf{y} \in \Omega^N \cap \Pi_{i=1}^N B_{\delta/2}
(x_i)$, and the result follows.

\subsection{Proof of Lemma~\ref{lemma:relaxation_atomless_measures}}
 The proof applies a generalization of Brenier's Theorem in~\cite{RM:95}. We consider
  convex functions $\chi_i : \Omega \rightarrow \real$, for $i \in
  \lbrace 1, \ldots, N \rbrace$ defined by:
\begin{align*}
  \chi_i(z) = \frac{1}{2} \left| z + y_i - x_i \right|^2.
\end{align*}
We note that the gradient of $\chi_i$, $\nabla \chi_i(z) = z + y_i -
x_i$ defines a map that transports the measure $\widehat{\mu}^h_{x_i}$
to $\widehat{\mu}^h_{y_i}$ simply by translation. In
  addition, this mapping defines a measure with cyclically monotone
  support and marginals $\widehat{\mu}^{h,N}_{\mathbf{x}} $ and
  $\widehat{\mu}^{h,N}_{\mathbf{y}} $. By the generalization of
Brenier's Theorem~\cite{RM:95} (c.f.~Theorem~12 and extensions on uniqueness) 
%
%
  a measure
  that has cyclic monotone support is both unique and optimal in the
  Monge-Kantorovich sense. Thus it coincides with the measure defined
  by the $\chi_i$ and the statement of the lemma follows.

\subsection{Proof of Lemma~\ref{lemma:alpha_smoothness_F^{h,N}}}
  From $l$-smoothness of~$F$, we have that the function 
  $\varphi = \left. \frac{\delta F}{\delta \nu} \right|_{\mu}$ 
  is continuously differentiable on $\Omega$ for all $\mu$.
  We note that for $x,y \in \tilde{\Omega}_h$,
  $\widehat{\mu}^{h}_{y}(z) = \widehat{\mu}^{h}_{x}(z + (x - y))$
  for all $z \in \supp \left( \widehat{\mu}^{h}_{y} \right)$. For
    any $\mathbf{x} \in \tilde{\Omega}^N_h$, we use $(x_i,
    \mathbf{x}_{-i}) \in \tilde{\Omega}_h\times
    \tilde{\Omega}^{N-1}_h$ to denote the vector with its first entry
    equal to the $\supscr{i}{th}$ component of $\mathbf{x}$ and all
    others equal to the remaining $N-1$ components of $\mathbf{x}$. We now have:
\begin{footnotesize}
\begin{align*}
  &\left| \left \langle \nabla F^{h,N}(\mathbf{y}) - \nabla F^{h,N}(\mathbf{x}), \mathbf{y} - \mathbf{x} \right \rangle \right| \\
  & = \left| \sum_{i=1}^N \left \langle \partial_1 F^{h,N}(y_i, \mathbf{y}_{-i}) - \partial_1 F^{h,N}(x_i, \mathbf{x}_{-i}), y_i - x_i \right \rangle \right| \\
  &= \left|  \frac{1}{N} \sum_{i=1}^N \left \langle \int_{\Omega} \nabla \varphi^{h,N}_{\mathbf{y}}(z) d\widehat{\mu}^{h}_{y_i}(z) - \int_{\Omega} \nabla \varphi^{h,N}_{\mathbf{x}}(z) d\widehat{\mu}^{h}_{x_i}(z),  \right. \right. \\
  & \qquad \qquad \qquad \left. \left. y_i - x_i \right \rangle \right| \\
  &= \left| \frac{1}{N} \sum_{i=1}^N \left \langle  \int_{\Omega} \left[ \nabla \varphi^{h,N}_{\mathbf{y}}(z + (y_i - x_i)) - \nabla \varphi^{h,N}_{\mathbf{x}}(z) \right] d\widehat{\mu}^{h}_{x_i}(z), \right. \right. \\
  & \qquad \qquad \qquad \left. \left. y_i - x_i \right \rangle \right| \\
  &= \left| \frac{1}{N} \sum_{i=1}^N  \int_{\Omega} \left \langle \nabla \varphi^{h,N}_{\mathbf{y}}(z + (y_i - x_i)) - \nabla \varphi^{h,N}_{\mathbf{x}}(z) ,  \right. \right. \\ 
  & \qquad \qquad \qquad \left. \left.	 y_i - x_i \right \rangle  d\widehat{\mu}^{h}_{x_i}(z) \right| \\
  &\leq l W_2^2(\widehat{\mu}^{h,N}_{\mathbf{x}},\widehat{\mu}^{h,N}_{\mathbf{y}}) \\
  &= \frac{l}{N} \| \mathbf{y} - \mathbf{x} \|^2,
\end{align*}
\end{footnotesize}
\noindent where the penultimate inequality results from the $l$-smoothness of
$F$.  Moreover, the final inequality results from the fact
that
$W_2(\widehat{\mu}^{h,N}_{\mathbf{x}},\widehat{\mu}^{h,N}_{\mathbf{y}})
\leq \| \mathbf{y} - \mathbf{x} \|$.

\subsection{Proof of Lemma~\ref{lemma:comparison_cyclically_monotone}}
  For $\mathbf{x} \in \Delta_{\delta}$ and $\mathbf{y} \in
  \Gamma_{\mathbf{x}}$, using the geodesic convexity of the functional
  $F$ and Lemma~\ref{lemma:first_order_convexity} with
  $\widehat{\mu}^{h,N}_{\mathbf{x}}$ as the reference measure, it
  follows that:
\begin{align*}
  &F^{h,N}(\mathbf{y}) = F(\widehat{\mu}^{h,N}_{\mathbf{y}}) \\
  &\geq F(\widehat{\mu}^{h,N}_{\mathbf{x}}) + \int_{\Omega} \left
    \langle \nabla \varphi^{h,N}_{\mathbf{x}} ,
    T_{\widehat{\mu}^{h,N}_{\mathbf{x}} \rightarrow
      \widehat{\mu}^{h,N}_{\mathbf{y}}}
    - \id \right \rangle d\widehat{\mu}^{h,N}_{\mathbf{x}} \\
  &= F(\widehat{\mu}^{h,N}_{\mathbf{x}}) + { \frac{1}{N} \sum_{i=1}^N
    \int_{\Omega} \left \langle \nabla \varphi^{h,N}_{\mathbf{x}} ,
      T_{\widehat{\mu}^{h,N}_{\mathbf{x}} \rightarrow \widehat{\mu}^{h,N}_{\mathbf{y}}} - \id \right \rangle d\widehat{\mu}^{h}_{x_i} }\\
  &= F(\widehat{\mu}^{h,N}_{\mathbf{x}}) + \frac{1}{N} \sum_{i=1}^N
  \int_{\supp(\widehat{\mu}^h_{x_i})} \left
    \langle \nabla \varphi^{h,N}_{\mathbf{x}} , y_i - x_i \right \rangle d\widehat{\mu}^{h}_{x_i} \\
  &= F(\widehat{\mu}^{h,N}_{\mathbf{x}}) + \frac{1}{N} \sum_{i=1}^N
  \left \langle \int_{\supp(\widehat{\mu}^h_{x_i})}
    \nabla \varphi^{h,N}_{\mathbf{x}} d\widehat{\mu}^{h}_{x_i} ~,~ y_i - x_i \right \rangle \\
  &= F^{h,N}(\mathbf{x}) + \sum_{i=1}^N \left \langle \partial_1
    F^{h,N}(x_i, \mathbf{x}_{-i}) , y_i - x_i \right \rangle,
\end{align*}
thereby establishing the claim.

\subsection{Proof of Lemma~\ref{lemma:strong_convexity_prox_grad_omega_N}}
  From Lemma~\ref{lemma:alpha_smoothness_F^{h,N}} on
  $\alpha$-smoothness of $F^{h,N}$, we have:
\begin{align*}
  \left| \left \langle \nabla F^{h,N}(\mathbf{y}) - \nabla
      F^{h,N}(\mathbf{x}) , \mathbf{y} - \mathbf{x} \right \rangle
  \right| \leq \alpha \| \mathbf{y} - \mathbf{x} \|^2.
\end{align*}
With $G^{h,N}_{\mathbf{x}}(\mathbf{z}) = \frac{1}{2\tau} \| \mathbf{x}
- \mathbf{z} \|^2 + F^{h,N}(\mathbf{z})$, we have:
{\small 
\begin{align*}
  &\left \langle \nabla G^{h,N}_{\mathbf{x}}(\mathbf{z}_1) - \nabla G^{h,N}_{\mathbf{x}}(\mathbf{z}_2), \mathbf{z}_1 - \mathbf{z}_2 \right \rangle \\
  &=  \left \langle \frac{1}{\tau} (\mathbf{z}_1 - \mathbf{z}_2) + \nabla F^{h,N}(\mathbf{z}_1) - \nabla F^{h,N}(\mathbf{z}_2) ~,~ \mathbf{z}_1 - \mathbf{z}_2  \right \rangle \\
  &= \frac{1}{\tau} \| \mathbf{z}_1 - \mathbf{z}_2 \|^2 +  \left \langle \nabla F^{h,N}(\mathbf{z}_1) - \nabla F^{h,N}(\mathbf{z}_2) ~,~ \mathbf{z}_1 - \mathbf{z}_2  \right \rangle \\
  &\geq \frac{1}{\tau} \| \mathbf{z}_1 - \mathbf{z}_2 \|^2 - \alpha \| \mathbf{z}_1 - \mathbf{z}_2 \|^2 \\
  &= \left( \frac{1}{\tau} - \alpha \right) \| \mathbf{z}_1 -
  \mathbf{z}_2 \|^2,
\end{align*}
}
thereby establishing the claim.

\subsection{Proof of Lemma~\ref{lemma:transport_many_particle}}
Let $\mathbf{x} = (x_1,\dots,x_N) \in
  \overline{\Omega}_h^N$ be a vector, and define the
  measures 
  $\otimes_{\substack{j=1, \\ j \neq i}}^{N}~ \delta_{x_j}
  \in \mathcal{P}(\Omega^{N-1})$, for $i \in \until{N}$, where
  $\otimes_{\substack{j=1, \\ j \neq i}}^{N} ~\delta_{x_j}$ is the
  product measure describing the independent coupling between the
  discrete measures $\delta_{x_i}$.  Observe that, given $\mathbf{x}$,
  we can obtain $x_i$ as a sample of $ \frac{1}{N} \sum_{i=1}^N
  \delta_{x_i}$ and $\mathbf{x}_{-i}$ as a sample of
  $\otimes_{\substack{j=1, \\ j \neq i}}^{N} ~\delta_{x_j}$.
  Thus, we rewrite~\eqref{eq:multi_agent_proximal_grad}
  as:
\begin{align*}
  &x^{+} = \arg \min_{z \in \overline{\Omega}_h} \frac{1}{2\tau} |x -
  z|^2 + F^{h,N}(z, \xi), \\
  & x \sim  \frac{1}{N} \sum_{i=1}^N \delta_{x_i}, ~~~~ \xi \sim
  \otimes_{\substack{j=1, \\ j \neq i}}^{N}~ \delta_{x_j}, \quad
   \forall \, i \in \until{n}.
\end{align*}
From the arguments in the proof of
Theorem~\ref{thm:convergence_multi_agent_local_min}, the above update
scheme can be expressed equivalently as:
\begin{align*}
  &x^{+} = x - \tau \partial_1 F^{h,N}(x^{+}, \xi), \\
  & x \sim \frac{1}{N} \sum_{i=1}^N \delta_{x_i}, ~~~~ \xi \sim
  \otimes_{\substack{j=1, \\ j \neq i}}^{N}~ \delta_{x_j}, \quad
  \forall \, i \in \until{n}.
\end{align*}
For $x_i \sim_{i.i.d} \mu$, we know that $\lim_{N \rightarrow \infty}
\frac{1}{N} \sum_{i=1}^N \delta_{x_i} = \mu$ uniformly, almost surely.
In addition, since $F$ is Fr{\'e}chet differentiable over the
  compact $\mathcal{P}(\Omega)$ and this differential is Lipschitz continuous
  it follows that $\nabla \left( \frac{\delta F}{\delta \nu} \right)$
  is bounded in $\mathcal{P}(\Omega)$. Since $\nabla
  \varphi^{h,N} = \nabla \left( \frac{\delta F}{ \delta \nu} \right)
  (\widehat{\mu}^{h,N}) \le K$, and $\int_\Omega K
  d\mu_z^h = \int_\Omega K \rho_z^h \dvol < C$ by
  Assumption~\ref{ass:kernel_properties}, 
  we can exchange integral and limits to derive
%

{\small
\begin{align*}
  \lim_{h \rightarrow 0} \lim_{N \rightarrow \infty} \partial_1
  F^{h,N}(z, \xi)
  &= \lim_{h \rightarrow 0} \lim_{N \rightarrow \infty} \int_{\Omega}
  \nabla \varphi^{h,N} d\mu^{h}_{z} 
 = \nabla \varphi (z),
\end{align*}
}
with $\xi \sim \otimes_{\substack{j=1, \\ j \neq i}}^{N}~\delta_{x_j}$ and $\varphi = \frac{\delta F}{\delta \nu}
\left. \right|_{\mu}$. This follows from
  $\lim_{\substack{h\rightarrow 0\\
    N \rightarrow \infty}}
  \frac{\delta F}{\delta \nu}(\widehat{\mu}^{h,N}) =
  \frac{\delta F}{\delta \nu} (\mu)$ and $\lim_{h \rightarrow 0} \mu^h_z = \delta_z$.
Thus:
\begin{align*}
	x^{+} &= x - \tau \nabla \varphi (x^{+}), 
	\qquad x \sim \mu.
\end{align*}
or equivalently:
\begin{align*}
          x^{+} = \arg \min_{z \in \Omega} ~&\frac{1}{2\tau} |x - z|^2
          + \varphi(z), 
\qquad x \sim \mu.
\end{align*}

\subsection{Proof of Lemma~\ref{lemma:conv_agg_obj_func}}
  From the Glivenko-Cantelli theorem, it follows that, as $N
  \rightarrow \infty$, the limit $\sum_{i=1}^N
  \mu^\star(\mathcal{V}_i) \delta_{x_i} \rightarrow \mu^\star$ holds
  almost surely, in the weak sense (from the expectation w.r.t.~$\sum_{i=1}^N
  \mu^\star(\mathcal{V}_i) \delta_{x_i}$ of any simple function).
  Thus, by the continuity of $C_f$:
{\small 
  \begin{equation*}
    \lim_{N \rightarrow \infty} \mathcal{H}_f(\mathbf{x})
    = \lim_{N \rightarrow \infty} C_f
    \left( \sum_{i=1}^N \mu^\star(\mathcal{V}_i) \delta_{x_i}, \mu^\star
    \right) = 0.
  \end{equation*}
}

\subsection{Proof of Lemma~\ref{lemma:conv_agg_obj_func_OT}}
  This can be seen from the following:

{\small
\begin{align*}
  &\bar{\mathcal{H}}_f(\mathbf{x}) = C_f \left( \frac{1}{N} \sum_{i=1}^N \delta_{x_i}~,~ \mu^\star \right) \\
  &= \min_{\substack{T: \Omega \rightarrow \lbrace x_i \rbrace_{i=1}^N
      \\ T_{\#}\mu^\star = \frac{1}{N} \sum_{i=1}^N \delta_{x_i}}}
  \int_{\Omega} 	f(|x - T(x)|) d\mu^\star(x) \\
  &= \min_{\substack{T: \Omega \rightarrow \lbrace x_i
      \rbrace_{i=1}^N\\
      \mu^\star( T^{-1}(\lbrace x_i \rbrace) ) = \frac{1}{N} }}
  \int_{\Omega} f(|x - T(x)|) d\mu^\star(x) 
.
\end{align*}
}
Similar to $\mathcal{H}_f(\mathbf{x})$, the functional
$\bar{\mathcal{H}}_f$ can be expressed as the sum of integrals over
certain space partition. However, this case involves a generalized
Voronoi partition $\left \lbrace \mathcal{W}_i \right
\rbrace_{i=1}^N$: 

\begin{align*}
  \mathcal{W}_i = \left \lbrace x \in \Omega \left| f(|x - x_i|) -
      \omega_i \leq f(|x- x_j|) - \omega_j \right. \right \rbrace,
\end{align*} 
where $\lbrace \omega_1, \ldots, \omega_N \rbrace$ are chosen such
that $\mu^\star(\mathcal{W}_i) = 1/N$ for all $i \in \lbrace 1,
\ldots, N \rbrace$.  We refer the reader to~\cite{JC:08-tac} for a
detailed treatment.  We can now write:
\begin{align*}
  \bar{\mathcal{H}}_f (\mathbf{x}) = \sum_{i=1}^N \int_{\mathcal{W}_i}
  f(|x-x_i|)~d\mu^\star(x).
\end{align*}
Now, by letting $x_i \sim_{i.i.d} \mu$, where $\mu \in
\mathcal{P}(\Omega)$ is any absolutely continuous probability measure,
in the limit $N \rightarrow \infty$, we have $\frac{1}{N} \sum_{i=1}^N
\delta_{x_i}$ converging uniformly almost surely to $\mu$. In this
way, by the continuity of $C_f$, we have:
\begin{align*}
  \lim_{N \rightarrow \infty} \bar{\mathcal{H}}_f (\mathbf{x}) = C_f
  \left( \mu, \mu^\star \right), ~~a.s.
\end{align*}

\section{On the continuous-time and many-particle limits}
\label{app:proof_prop_transport_cts_time_many_particle_continuity_equation}
We establish the model of transport in the continuous-time and many-particle limits
via the following proposition:
\begin{proposition}[\bf \emph{Model of transport in the continuous
    time and many-particle limits}]
	\label{prop:transport_cts_time_many_particle_continuity_equation}  
	Let $\Omega$ and $F$ satisfy the assumptions of
    Theorem~\ref{thm:conv_prox_recursion_wasserstein}.
    The following hold: \\
    {\bf \emph{(i) Convergence of update scheme:}}
    	The scheme~\eqref{eq:multi_agent_proximal_grad} converges 
    	in distribution to~\eqref{eq:multi-agent_limit_N_infty} 
    	in the limit $N \rightarrow \infty$. \\
{\bf \emph{(ii) Gradient flow:}} For every decreasing sequence $\lbrace \tau_n
\rbrace_{n \in \mathbb{N}}$ satisfying $\tau_0 < \frac{1}{l}$ and
$\lim_{n \rightarrow \infty} \tau_n = 0$, the sequence of solutions
$\lbrace x^n \rbrace_{n \in \mathbb{N}}$
to \eqref{eq:multi-agent_limit_N_infty} with
  corresponding $\{ \tau_n\}_{n \in \mathbb{N}}$ contains a
convergent subsequence, and the limit is a weak solution to the
gradient flow:
\begin{align}
	\partial_t X^t(x) = - \nabla \varphi_t(X^t(x)),
	\label{eq:cts_time_particle_grad_flow}
\end{align}
with $X^0(x) = x$, $\mu(t) = X^t_{\#} \mu_0$ and 
$\varphi_t = \left. \frac{\delta F}{\delta \nu} \right|_{\mu(t)}$. \\
{\bf \emph{(iii) Continuity equation:}}  Let~$T > 0$ and $\mathbf{v} \in
        L^{\infty}([0,T] \times \Lip(\Omega)^d)$, 
        and $\dot{x}_i(t) = \mathbf{v}(t, x_i(t))$ for any
        $t \in [0,T]$ and $i \in \mathbb{N}$, with $x_i(0) \sim_{i.i.d} \mu_0$.  
        Then, for $\mathbf{x}^N = (x_1, \ldots, x_N)$ for any $N \in \mathbb{N}$,
        the sequence $\lbrace \mathbf{x}^N \rbrace_{N \in \mathbb{N}}$ 
        converges in a distributional sense to a solution~$\mu$ of the continuity
        equation:
\begin{align}
  \frac{\partial \mu}{\partial t} + \nabla \cdot \left( \mu \mathbf{v}
  \right) = 0, \qquad \mu(0) = \mu_0.
	\label{eq:continuity}
\end{align}
\end{proposition}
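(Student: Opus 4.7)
The plan is to tackle the three parts separately, since each captures a different limit (particle number, time step, and the combined flow) and calls for different machinery. Part~(i) is essentially the content of Lemma~\ref{lemma:transport_many_particle}, so I would simply invoke it and keep the focus on (ii) and (iii).

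For Part~(i), the proposal is to reuse the argument sketched in the proof of Lemma~\ref{lemma:transport_many_particle}. Writing the agent-wise update as $x^{+} = x - \tau\, \partial_1 F^{h,N}(x^{+}, \xi)$, I would observe that by the Glivenko-Cantelli theorem the empirical measure $\frac{1}{N}\sum_i \delta_{x_i}$ converges uniformly almost surely to $\mu$, while Assumption~\ref{ass:kernel_properties} and the Lipschitz continuity of $\nabla\left(\tfrac{\delta F}{\delta\nu}\right)$ on the compact $\mathcal{P}(\Omega)$ together allow us to exchange the $h\to 0$, $N\to\infty$ limit with the kernel integral, producing $\nabla\varphi(z)$ with $\varphi=\left.\tfrac{\delta F}{\delta\nu}\right|_{\mu}$. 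The update then collapses to $x^{+}=x-\tau\nabla\varphi(x^{+})$, which is precisely the first-order optimality condition of~\eqref{eq:multi-agent_limit_N_infty}.

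For Part~(ii), I would pass from a discrete-in-time proximal scheme to a continuous trajectory by an Arzel\`a-Ascoli compactness argument. Fix a sample $x\in\Omega$ and let $x^{n,k}$ denote the iterates produced by~\eqref{eq:multi-agent_limit_N_infty} with step $\tau_n$, starting from $x^{n,0}=x$. Define the piecewise-linear interpolant $X^n:[0,T]\to\Omega$ by $X^n(k\tau_n)=x^{n,k}$. The optimality condition gives $(x^{n,k+1}-x^{n,k})/\tau_n=-\nabla\varphi_{n,k+1}(x^{n,k+1})$, and since $\varphi_t=\left.\tfrac{\delta F}{\delta\nu}\right|_{\mu(t)}$ has Lipschitz derivative on the compact $\Omega$ (Assumption~\ref{ass:regularity} and Proposition~\ref{prop:lipschitz_frechet_derivative}), the discrete velocities are uniformly bounded, yielding uniform Lipschitz continuity of $\{X^n\}$. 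Arzel\`a-Ascoli produces a convergent subsequence $X^{n_j}\to X$, and the limit curve satisfies $\partial_t X^t(x)=-\nabla\varphi_t(X^t(x))$ in the weak sense after passing to the limit in the Euler relation, using the stability of the Fr\'echet derivative $\varphi_t$ under the weak convergence $\mu_t^{n_j}\rightharpoonup\mu_t$ (which is in turn obtained by pushing forward $\mu_0$ along $X^{n_j}$). The main delicate point here will be the joint continuity of $(t,\mu)\mapsto\varphi_t$ in the weak topology, which is where I expect to lean most heavily on the regularity hypotheses in Assumption~\ref{ass:regularity}.

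For Part~(iii), I would use the standard duality characterization of the continuity equation. Given $\mathbf{v}\in L^{\infty}([0,T];\Lip(\Omega)^d)$ and trajectories $\dot{x}_i(t)=\mathbf{v}(t,x_i(t))$ with $x_i(0)\sim_{i.i.d.}\mu_0$, define the empirical measure $\mu^{N}_t=\tfrac{1}{N}\sum_{i=1}^N \delta_{x_i(t)}$. For any test function $\phi\in C^\infty_c([0,T)\times\Omega)$, the chain rule yields
\begin{align*}
\int_0^T\!\!\int_\Omega\!\bigl(\partial_t\phi+\nabla\phi\cdot\mathbf{v}\bigr)d\mu^{N}_t\,dt+\int_\Omega\phi(0,\cdot)\,d\mu^{N}_0=0,
\end{align*}
so $\mu^{N}_t$ solves~\eqref{eq:continuity} with initial datum $\mu^N_0$ in the distributional sense for every $N$. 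By Glivenko-Cantelli, $\mu^{N}_0\rightharpoonup\mu_0$ almost surely, and because the flow map $\Phi_t$ associated with the Lipschitz field $\mathbf{v}$ is continuous on $\Omega$, $x_i(t)=\Phi_t(x_i(0))$ are i.i.d.\ samples from $(\Phi_t)_\#\mu_0$, so $\mu^N_t\rightharpoonup(\Phi_t)_\#\mu_0=:\mu(t)$ almost surely for each $t$. Passing to the limit in the weak formulation above (justified by the uniform boundedness of $\mathbf{v}$ and $\phi$ and dominated convergence) gives that $\mu$ solves~\eqref{eq:continuity} distributionally. The main obstacle in this part is uniform-in-$N$ control to justify the limit exchange, which I would resolve by the compactness of $\Omega$ and the $L^\infty$ bound on $\mathbf{v}$.
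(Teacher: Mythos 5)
Your overall decomposition matches the paper's, and part (i) is handled identically (both simply rerun the argument of Lemma~\ref{lemma:transport_many_particle}: Glivenko--Cantelli plus the limit--integral exchange under Assumption~\ref{ass:kernel_properties}). Parts (ii) and (iii), however, follow genuinely different routes. For (ii), you obtain equicontinuity of the interpolants from a uniform bound on the discrete velocities, i.e.\ a uniform sup bound on $\nabla\varphi_{n,k}$ over all measures encountered; the paper instead sums the proximal descent inequality $\frac{1}{2\tau_n}|x^n(m)-x^n(m-1)|^2\le \varphi_{m-1}(x^n(m-1))-\varphi_m(x^n(m))$, telescopes, and applies Cauchy--Schwarz to get a H\"older-$1/2$-type modulus $|x^n(k)-x^n(k')|\lesssim (\varphi_0(x_0)-C)^{1/2}(k-k')^{1/2}$ before invoking Arzel\`a--Ascoli. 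Your route is shorter but leans on the uniform gradient bound over the whole family of iterated pushforward measures --- the paper does assert exactly this bound (in the proof of Lemma~\ref{lemma:transport_many_particle}, from Fr\'echet differentiability over the compact $\mathcal{P}(\Omega)$ plus Lipschitz continuity of $\xi_\mu$), so you should cite that assertion explicitly rather than deduce it from Lipschitzness of $\nabla\varphi_t$ alone, which only gives a modulus, not a uniform magnitude. The energy-estimate route buys independence from that uniform bound, needing only that the total decrease of $\varphi$ along the trajectory is finite. The delicate point you flag --- continuity of $\mu\mapsto\xi_\mu$ needed to pass to the limit in the Euler relation --- is present in the paper's proof too (its final equality is justified only by reference to~\eqref{eq:multi-agent_limit_N_infty}), so you are not at a rigor deficit there. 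For (iii), you work directly with the empirical measures, which satisfy the weak continuity equation exactly by the chain rule, and then pass to the weak limit using $\mu^N_t=(\Phi_t)_\#\mu^N_0\rightharpoonup(\Phi_t)_\#\mu_0$; the paper instead differentiates the kernel-smoothed density $\rho^{h,N}_{\mathbf{x}}$ in time and takes the double limit $N\to\infty$, $h\to 0$ with dominated convergence and Assumption~\ref{ass:kernel_properties}-(iv). Your version is the more standard and cleaner duality argument and avoids the smoothing parameter entirely; the paper's version keeps the kernel representation consistent with the discretization $F^{h,N}$ used throughout Section~\ref{sec:multi_agent_coverage_control}. Both are valid proofs of the stated claims.
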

\begin{proof}
\textit{(i)} Let $\mathbf{x} = (x_1,\dots,x_N) \in
  \overline{\Omega}_h^N$ be a vector, and define the
  measures 
  $\xi_{-i} =\otimes_{\substack{i=1, \\ x_i \neq x}}^{N}~ \delta_{x_i}
  \in \mathcal{P}(\Omega^{N-1})$, for $i \in \until{N}$, where
  $\otimes_{\substack{i=1, \\ x_i \neq x}}^{N} ~\delta_{x_i}$ is the
  product measure describing the independent coupling between the
  discrete measures $\delta_{x_i}$.  Observe that, given $\mathbf{x}$,
  we can obtain $x_i$ as a sample of $ \frac{1}{N} \sum_{i=1}^N
  \delta_{x_i}$ and $\mathbf{x}_{-i}$ as a sample of
  $\otimes_{\substack{i=1, \\ x_i \neq x}}^{N} ~\delta_{x_i}$.
  Thus, we rewrite~\eqref{eq:multi_agent_proximal_grad}
  as:
\begin{align*}
  &x^{+} = \arg \min_{z \in \overline{\Omega}_h} \frac{1}{2\tau} |x -
  z|^2 + F^{h,N}(z, \xi_{-i}), \\
  & x \sim  \frac{1}{N} \sum_{i=1}^N \delta_{x_i}, ~~~~ \xi \sim
  \otimes_{\substack{i=1, \\ x_i \neq x}}^{N}~ \delta_{x_i}, \quad
  \forall \, i \in \until{n}.
\end{align*}
From the arguments in the proof of
Theorem~\ref{thm:convergence_multi_agent_local_min}, the above update
scheme can be expressed equivalently as:
\begin{align*}
  &x^{+} = x - \tau \partial_1 F^{h,N}(x^{+}, \xi_{-i}), \\
  & x \sim \frac{1}{N} \sum_{i=1}^N \delta_{x_i}, ~~~~ \xi \sim
  \otimes_{\substack{i=1, \\ x_i \neq x}}^{N}~ \delta_{x_i}, \quad
  \forall \, i \in \until{n}.
\end{align*}
%
For $x_i \sim_{i.i.d} \mu$, we know that $\lim_{N \rightarrow \infty}
\frac{1}{N} \sum_{i=1}^N \delta_{x_i} = \mu$ uniformly, almost surely.
 In addition, if $F$ is Fr\'echet differentiable over the
  compact $\mathcal{P}(\Omega)$ and this differential is continuous
 then, $\frac{\delta F}{\delta \nu}$
  is bounded in $\mathcal{P}(\Omega)$. Since $\nabla
  \varphi^{h,N}_{(x_i,\xi_{-i})} = \frac{\delta F}{ \delta \nu}
  (\widehat{\mu}^{h,N}_{(x_i,\xi_{-i})}) \le K$, and $\int_\Omega K
  d\mu_z^h = \int_\Omega K \rho_z^h \dvol < C$ by
  Assumption~\ref{ass:kernel_properties}, 
  we can exchange integral and limits
 to derive

{\small
\begin{align*}
  \lim_{h \rightarrow 0} \lim_{N \rightarrow \infty} \partial_1
  F^{h,N}(z, \xi_{-i})
  &= \lim_{h \rightarrow 0} \lim_{N \rightarrow \infty} \int_{\Omega}
  \nabla \varphi^{h,N}_{(z,\xi_{-i})} d\mu^{h}_{z} 
 = \nabla \varphi (z),
\end{align*}
}
with $\xi \sim \otimes_{\substack{i=1, \\ x_i \neq
    x}}^{N}~\delta_{x_i}$ and $\varphi = \frac{\delta F}{\delta \nu}
\left. \right|_{\mu}$. This follows from
  $\lim_{\substack{h\rightarrow 0\\
    N \rightarrow \infty}}
  \frac{\delta F}{\delta \nu}(\widehat{\mu}^{h,N}_{(z,\xi_{-i})}) =
  \frac{\delta F}{\delta \nu} (\mu)$ and $\lim_{h \rightarrow 0} \mu^h_z = \delta_z$.
Thus:
\begin{align*}
	x^{+} &= x - \tau \nabla \varphi (x^{+}), 
	\qquad x \sim \mu.
\end{align*}
or equivalently:
\begin{align}
          x^{+} = \arg \min_{z \in \Omega} ~&\frac{1}{2\tau} |x - z|^2
          + \varphi(z), 
\qquad x \sim \mu.
\end{align}

\textit{(ii)} Let $X : [0,T] \times \Omega \rightarrow \Omega$ be the
flow corresponding to $\mathbf{v} \in L^{\infty}([0,T] \times
\Lip(\Omega)^d)$, 
such that:
\begin{align*}
	\partial_t X^t(x) = \mathbf{v}(t, X^t(x)),
\end{align*}
with $X^0(x) = x$, and let $\mu(t) = X^t_{\#} \mu_0$ be the
pushforward of $\mu_0$ by the flow at time $t \in [0,T]$.
Dropping the superscript~$N$ from~$\mathbf{x}^N$ for conciseness, and
recalling from~\eqref{eq:density_kernel},
$\widehat{\mu}^{h,N}_{\mathbf{x}}(x) = \frac{1}{N} \sum_{i=1}^N
K(x-x_i, h)$. Let~$\rho^{h,N}_{\mathbf{x}}$ be the corresponding
density function.  Now, with $\widehat{\mu}^N_{\mathbf{x}(t)} =
\frac{1}{N} \sum_{i=1}^N
\delta_{x_i(t)}$, 
we can write:
\begin{align*}
  \frac{\partial \rho^{h,N}_{\mathbf{x}}}{\partial t} (t,x) &= -
  \frac{1}{N} \sum_{i=1}^N \nabla_x K (x-x_i(t),h) \cdot \mathbf{v}(t,
  x_i)\\
  &= - \int_{\Omega} \mathbf{v}(t,z) \cdot \nabla_x K (x-z,h)
  ~d\widehat{\mu}^N_{\mathbf{x}(t)}(z).
\end{align*}
%
%
%
We note that, by the Glivenko-Cantelli Theorem, the measure
$\widehat{\mu}^N_{\mathbf{x(0)}}$ converges uniformly almost surely to
$\mu_0$ as $N \rightarrow \infty$. This implies that for every~$t$,
$\widehat{\mu}^N_{\mathbf{x}(t)}$ converges uniformly almost surely to
the pushforward $\mu(t) = X^t_{\#} \mu_0$ as $N \rightarrow
\infty$. Therefore, by the  dominated convergence theorem
and Assumption~\ref{ass:kernel_properties}-(4), we have:
\begin{align}
	\begin{aligned}
          &\lim_{h \rightarrow 0} \lim_{N \rightarrow \infty}
          \rho^{h,N}_{\mathbf{x}}(t,x) = \lim_{h \rightarrow 0}
          \lim_{N \rightarrow \infty} \int_{\Omega} K(x-z,h)~
          d\widehat{\mu}^N_{\mathbf{x}(t)}(z) \\
          &=_{a.s.} \lim_{h \rightarrow 0} \int_{\Omega} K(x-z,h)~
          d\mu(t,z) \\ &= \lim_{h \rightarrow 0} \int_{\Omega} \rho(t,z)
          K(x-z,h) ~\dvol(z) = \rho(t,x).
	\end{aligned}
		\label{eq:limit_density}
\end{align}
From the above, we get that for a smooth test function 
$\zeta \in C^{\infty}([0,T] \times \Omega)$ such that $\zeta(0) = 0 = \zeta(T)$,
and again by the dominated convergence theorem and
Assumption~\ref{ass:kernel_properties}-(4):

{\small 
\begin{align}
  &\int_0^T \int_{\Omega} \frac{\partial \zeta}{\partial t}(t,x)
  \rho(t,x) \dvol(x) \hspace*{-0.2ex}\diff \hspace*{-0.4ex} t \nonumber \\
  &=_{a.s.} \lim_{h \rightarrow 0} \lim_{N \rightarrow \infty}
  \int_{0}^T \hspace*{-1.5ex}\int_{\Omega} \frac{\partial
    \zeta}{\partial t}(t,x) \rho_{\mathbf{x}}^{h,N}(t,x) \dvol(x)
  \hspace*{-0.2ex}\diff \hspace*{-0.4ex}t \nonumber
  \\
  &= \lim_{h \rightarrow 0} \lim_{N \rightarrow \infty} - \int_0^T
  \hspace*{-1.5ex} \int_{\Omega} \zeta(t,x) \frac{\partial
    \rho_{\mathbf{x}}^{h,N}}{\partial t}(t,x) \dvol(x)
  \hspace*{-0.2ex}\diff \hspace*{-0.4ex}t \nonumber\\
  &= \lim_{\substack{h \rightarrow 0\\
      N \rightarrow \infty}} \int_{0}^T \hspace*{-1.5ex} \int_{\Omega}
  \hspace*{-1ex}\zeta(t, x)\hspace*{-1ex} \int_{\Omega}\hspace*{-1ex}
  \mathbf{v}(t,z) \cdot \nabla_x K (x-z,h)
  d\widehat{\mu}^N_{\mathbf{x}(t)}(z) \dvol(x) \hspace*{-0.2ex}\diff
  \hspace*{-0.4ex}t \nonumber
\\
  &= \hspace*{-0.4ex}-\hspace*{-1ex}\lim_{\substack{h \rightarrow 0\\N
      \rightarrow \infty}} \hspace*{-1ex}\int_0^T\hspace*{-1.5ex}
  \int_{\Omega^2} 
  \hspace*{-1.5ex}\rho^{h,N}_{\mathbf{x}}(t,z) \nabla \zeta(t,x)
  \cdot
  \mathbf{v}(t,z) K(x-z,h) \hspace*{-0.2ex} \dvol(z,x)
  \hspace*{-0.2ex}\diff \hspace*{-0.4ex}t \nonumber \\
  &= - \int_{0}^T \int_{\Omega} \nabla \zeta(t,x) \cdot \mathbf{v}(t,z)
  \rho(t,x) \dvol(x)\diff \hspace*{-0.4ex}t.
\label{eq:continuity_distributional}
\end{align} 
}

The above is the distributional equivalent of the
continuity equation~\eqref{eq:continuity}. 
We now note from \eqref{eq:multi-agent_limit_N_infty} that $\frac{1}{2\tau}
|x^{+} - x|^2 \leq \varphi(x) - \varphi(x^+)$ (where $\tau <
\frac{1}{l}$, $x \sim \mu$ and $\varphi = \left. \frac{\delta
    F}{\delta \nu} \right|_{\mu}$).  Let $\lbrace \tau_{n} \rbrace_{n
  \in \mathbb{N}}$ be a decreasing sequence such that $\tau_0 <
\frac{1}{l}$ and $\lim_{n \rightarrow \infty} \tau_n = 0$. Let
$\lbrace x^{n}(k) \rbrace_{k,n\in \mathbb{N}}$ be the sequence of
solutions to \eqref{eq:multi-agent_limit_N_infty}, for each $\tau_n$,
starting from the same initial condition $x_0$.  We note that $x^n(k)
\in \Omega$ for all $n, k \in \mathbb{N}$.  We now define continuous
curves $\bar{x}^n: \realnonnegative \rightarrow \Omega$ 
 such that $\bar{x}^n(t) = \left( 1 + \lfloor \frac{t}{\tau_n} \rfloor - t
\right) x^n(\lfloor \frac{t}{\tau_n} \rfloor) + \left( t - \lfloor
  \frac{t}{\tau_n} \rfloor \right) x^n(\lfloor \frac{t}{\tau_n}\rfloor
+ 1)$.  From the compactness of $\Omega$, we get that the sequence
$\lbrace \bar{x}^{n} \rbrace$ is uniformly bounded.  Moreover, we have
that for $0 \leq k' \leq k$, and a fixed $n\in \mathbb{N}$, that:

{\small 
\begin{align*}
  &\left| x^n \left( k \right) - x^n \left( k' \right) \right|
  \leq \sum_{m=k'+1}^k \left| x^n(m) - x^n(m-1) \right|  \\
  &\leq  \left( \sum_{m=k'+1}^k \left| x^n(m) - x^n(m-1) \right|^2 \right)^{1/2} \left( k - k' \right)^{1/2} \\
  &= \sqrt{2\tau_n}\left( \sum_{m=k'+1}^k \frac{1}{2\tau_n}  \left| x^n(m) - x^n(m-1) \right|^2 \right)^{1/2} \left( k- k'  \right)^{1/2} \\
  &\leq \sqrt{2 \tau_0} \left( \sum_{m \in \mathbb{N}} \frac{1}{2\tau_n}  \left| x^n(m) - x^n(m-1) \right|^2 \right)^{1/2} \left( k - k'   \right)^{1/2} \\
  &\leq \sqrt{2 \tau_0} \left( \varphi_0 (x_0) - \lim_{m \rightarrow
      \infty} \varphi_{m}(x^n(m)) \right)^{1/2} \left( k - k'
  \right)^{1/2} ,
\end{align*}
}where $\varphi_m = \left. \frac{\delta F}{\delta \nu}
\right|_{\mu(m)}$, $\mu(m) = {T_{m-1}}_{\#} \ldots {T_0}_{\#}\mu_0$,
for all $m \ge 0$, and $T_{k} = \left( \id + \tau_n \nabla
  \varphi_{k-1} \right)^{-1}$ for $k \ge 1$.  From
Theorem~\ref{thm:target_dynamics_euclidean}, it follows that $\lim_{m
  \rightarrow \infty} \varphi_{m} = \left. \frac{\delta F}{\delta \mu}
\right|_{\mu^*} = C$, a constant function. We therefore have:
{\small 
\begin{align}
  \left| x^n \left( k \right) - x^n \left( k' \right) \right| \leq
  \sqrt{2\tau_0} \left( \varphi_0 (x_0) - C \right)^{1/2} \left( k -
    k' \right)^{1/2}.
	\label{eq:norm_bound_discrete_sol}
\end{align}
}
It now follows for $0 \leq t' \leq t$ that:
{\small
\begin{align*}
  &\left| \bar{x}^n(t) - \bar{x}^n(t') \right| \\ &= \left|
    \bar{x}^n(t) - x^n \left(\left\lfloor \frac{t}{\tau_n}
      \right\rfloor \right) + x^n\left(\left\lfloor \frac{t}{\tau_n}
      \right\rfloor \right) - x^n\left(\left\lfloor
        \frac{t'}{\tau_n}\right\rfloor + 1
    \right) \right. \\
  & \qquad \left. \qquad \qquad +~~ x^n\left(\left\lfloor
        \frac{t'}{\tau_n}\right\rfloor + 1  \right) - \bar{x}^n(t')  \right| \\
  &\leq \left| \bar{x}^n(t) - x^n \left(\left\lfloor \frac{t}{\tau_n}
      \right\rfloor \right) \right| + \left| x^n\left(\left\lfloor
        \frac{t}{\tau_n} \right\rfloor \right) - x^n\left(\left\lfloor
        \frac{t'}{\tau_n}\right\rfloor + 1 \right)
  \right| \\ & \qquad  \qquad \qquad    +~~  \left| x^n\left(\left\lfloor \frac{t'}{\tau_n} \right\rfloor + 1 \right) - \bar{x}^n(t')  \right| \\
  &\leq \left| \bar{x}^n(t) - x^n \left(\left\lfloor \frac{t}{\tau_n}
      \right\rfloor \right) \right|  + \sum_{m = \left\lfloor
      \frac{t'}{\tau_n} \right\rfloor + 1}^{\left\lfloor
      \frac{t'}{\tau_n}\right\rfloor - 1} \left| x^n(m+1) - x^n(m)
  \right| \\   & \qquad  \qquad \qquad    +~~  \left| x^n\left(\left\lfloor \frac{t'}{\tau_n} \right\rfloor + 1 \right) - \bar{x}^n(t')  \right|  \\
  &\leq \sqrt{2\tau_0} \left( \varphi_0 (x_0) - C \right)^{1/2} \left(
    t - t' \right),
\end{align*}
} where the final inequality follows from the definition of
$\bar{x}^n(t)$ and \eqref{eq:norm_bound_discrete_sol}.  The above
inequality holds for any $n \in \mathbb{N}$, and it thereby follows
that the family $\lbrace \bar{x}^n (t)\rbrace$ is uniformly
equicontinuous.  Therefore, from the Arzel{\'a}-Ascoli
theorem~\cite{WR:64}, we have that $\lbrace \bar{x}^n(t) \rbrace$
contains a uniformly convergent subsequence, and let the limit be the
curve $\lbrace x(t) \rbrace_{t \in \realnonnegative}$. 
Moreover, by isolating the uniformly convergent
subsequence and using a smooth test function $\zeta \in
C^{\infty}([0,T])$, we have:
\begin{align*}
  \int_{[0,T]} &\frac{d\zeta}{dt} x(t) \diff \hspace*{-0.4ex}t =
  \lim_{n \rightarrow \infty} \int_{[0,T]} \frac{d\zeta}{dt}
  \bar{x}^n(t) \diff
  \hspace*{-0.4ex}t \\
  & = \lim_{n \rightarrow \infty} \int_{[0,T]} \left( \frac{\zeta(t + \tau_n) - \zeta(t)}{\tau_n} \right) \bar{x}^n(t)  \diff \hspace*{-0.4ex}t \\
  &= \lim_{n \rightarrow \infty} \int_{[\tau_n,T]} \zeta(t) \left(
    \frac{\bar{x}^n(t-\tau_n) - \bar{x}^n(t)}{\tau_n} \right) \diff
  \hspace*{-0.4ex}t\\ & = \int_{[0,T]} \zeta(t) \nabla \varphi_t(x(t))
  \diff \hspace*{-0.4ex}t,
\end{align*}
where the final equality follows from
\eqref{eq:multi-agent_limit_N_infty}. This is the weak form of the
gradient flow~\eqref{eq:cts_time_particle_grad_flow}.
\end{proof}

\end{appendices}

\end{document}